\newtheorem{theorem}{Theorem}[section]
\newtheorem{lemma}[theorem]{Lemma}
\newtheorem{remark}{Remark}[section]
\newtheorem{definition}{Definition}[section]
\numberwithin{equation}{section}
\begin{document}
	
		\title{\sc Long-time Dynamics for a Cahn-Hilliard Tumor Growth Model with Chemotaxis}
	
	\author{
		{\sc Harald Garcke}\thanks{Email: \href{mailto:harald.garcke@ur.de}{harald.garcke@ur.de}} \\
		\small  Faculty of Mathematics, Regensburg University, \\
		\small 93040  Regensburg.
		\smallskip	\\	
		{\sc Sema Yayla}\thanks{Email: \href{mailto:semasimsek@hacettepe.edu.tr}{semasimsek@hacettepe.edu.tr}} \\
		\small Department of Mathematics, Faculty of Science, Hacettepe University,	\\
		\small  Beytepe 06800, Ankara, Turkey.
		\smallskip}

	\date{}
	\maketitle

\begin{abstract}
	Mathematical models that describe the tumor growth process have been formulated by several authors in order to understand how cancer develops and to develop new treatment approaches. In this study, it is aimed to investigate the long-time behavior of the two-phase diffuse-interface model, which was proposed in \cite{hawkins-daarud} to model a tumor tissue as a mixture of cancerous and healthy cells. Up to now, studies on the long-time behavior of this model have neglected chemotaxis and active transport, which have a significant effect on tumor growth. In this research, our main aim is to study this model with chemotaxis and active transport. We prove an asymptotic compactness result for the weak solutions of the problem in the whole phase-space $ H^{1}\left( \Omega\right) \times L^{2}\left( \Omega\right)$. We establish the existence of the global attractor in a phase space defined via mass conservation. We also prove that the global attractor equals to the unstable manifold emanating from the set of stationary points. Moreover, we obtain that the global attractor has a finite fractal dimension.
	
\end{abstract}
\noindent{\bf Keywords:}  Tumor growth model, Cahn-Hilliard equation, long-time dynamics, global attractor.

\smallskip

\noindent{{\bf 2010 MSC:}  35B40, 35D30, 35K57, 35Q92, 37L30, 92C17.}

\section{Introduction}

In this paper, we will investigate the long-time behavior of the following
tumor growth model:%
\begin{align}
&\varphi _{t}=div\left( m\left( \varphi \right) \nabla \mu \right)
+p\left( \varphi \right) \left( \chi _{\sigma }\sigma +\chi _{\varphi
}\left( 1-\varphi \right) -\mu \right) &\text{in }\left( 0,T\right) \times \Omega ,  \tag{1.1}\\
&\mu =-\Delta \varphi +\Psi ^{\prime }\left( \varphi \right) -\chi _{\varphi
}\sigma &\text{in }\left( 0,T\right) \times \Omega ,  \tag{1.2}\\
&\sigma _{t}=div\left( n\left( \varphi \right) \left( \chi _{\sigma
}\nabla \sigma -\chi _{\varphi }\nabla \varphi \right) \right) -p\left(
\varphi \right) \left( \chi _{\sigma }\sigma +\chi _{\varphi }\left(
1-\varphi \right) -\mu \right) \ \ &\text{in }\left( 0,T\right) \times \Omega ,\bigskip  \tag{1.3}\\
&\partial _{\nu }\mu =\partial _{\nu }\varphi =\partial _{\nu }\sigma =0&\text{on }\left( 0,T\right) \times\Gamma,  \tag{1.4}
\end{align}
with initial conditions%
\begin{equation}
\varphi \left( 0\right) =\varphi _{0}\text{, }\sigma \left( 0\right) =\sigma
_{0},  \tag{1.5}
\end{equation}%
where $\Omega \subset 
\mathbb{R}
^{3}$ is a bounded domain with smooth boundary $\Gamma$ and $%
\partial _{\nu }$ in (1.4) stands for the normal derivative where $ \nu $ is the outer unit normal to $\Gamma$.

This model was introduced by A. Hawkins-Daarud, Van der Zee and Oden in \cite{hawkins-daarud} as a continuum-mixture model for tumor growth. The system (1.1) is a Cahn-Hilliard type equation in which the order parameter $\varphi \in \left[-1,1\right] $ is equal to $1$ in the tumorous phase and equal to $-1$ in
the healthy phase, and $\mu $ denotes the chemical potential for $\varphi $. Equation (1.3) is a reaction-diffusion equation where $\sigma $
represents the chemical concentration that serves as nutrient for the tumor. Here, $m\left( \varphi \right) $ and  $n\left( \varphi \right) $
are positive mobilities which denote the diffusivity of the binary mixture
and the chemicals. Furthermore, $\Psi$ is a potential with two equal minima at $\ \pm 1\ $. Finally, $\chi _{\varphi }\geq 0$ is
a constant which represents of the chemotaxis and active transport\textbf{,} and also the constant $\chi_{\sigma }\geq 0$ stands for the chemical mobility\textbf{. }

A similar system to (1.1)-(1.3) was studied in \cite{garcke1}- \cite{garcke4}%
.\textit{\ } In \cite{garcke1}, by using \ principal thermodynamic rules, a
Cahn-Hilliard-Darcy model was proposed for tumor growth with chemotaxis and
active transport. With the help of asymptotic analysis,
the authors developed sharp interface models of tumor growth, and then stability and instability of radial solutions of a specific sharp interface model was investigated.
Some numerical calculations were also carried out to see the effect of the
active transport term on the tumor growth phenomena. In \cite{garcke2} also
a Cahn-Hilliard-Darcy system was investigated. In that paper, in the two and
three dimensional case, existence of global weak solutions was obtained by
means of Galerkin method. In \cite{garcke3}, the following diffuse interface
model coupled with Neumann boundary conditions was considered:%
\begin{align}
&\varphi _{t}=div\left( m\left( \varphi \right) \nabla \mu \right)
+(\lambda _{p}\sigma -\lambda _{a})h\left( \varphi \right) &\text{in }\left( 0,T\right) \times\Omega ,  \tag{1.5}\\
&\mu =A\Psi ^{\prime }\left( \varphi \right) -B\Delta \varphi -\chi _{\varphi
}\sigma &\text{in }\left( 0,T\right) \times\Omega,  \tag{1.6}\\
&\kappa \sigma _{t}=div\left( n\left( \varphi \right) \left( \chi
_{\sigma }\nabla \sigma -\chi _{\varphi }\nabla \varphi \right) \right)
-\lambda _{c}\sigma h\left( \varphi \right)&\text{in }\left( 0,T\right) \times\Omega,   \tag{1.7}\\
&\partial _{\nu } \mu =\partial _{\nu }\varphi =0\text{ and }n\left( \varphi
\right) \chi _{\sigma }\partial _{\nu } \sigma =K\left( \sigma _{\infty
}-\sigma \right) &\text{on } \left( 0,T\right)\times\Gamma  .  \tag{1.8}
\end{align}
Here, $\kappa =1$, $A$, $B$ and $K$ are positive constants, $\lambda
_{p},\lambda _{a}$ and $\lambda _{c}$ are also nonnegative constants which stand
for the proliferation rate, the apoptosis rate of the tumor cells, and the
consumption rate of the nutrient, respectively. The interpolation function $h\left( \varphi\right) $ is chosen such that $h\left( 1\right) =0$ and $%
h\left( -1\right) =0$, and $\sigma _{\infty }$ represents the nutrient
amount on the boundary. Additionally, $m\left( \varphi \right) $ and $%
n\left( \varphi \right) $ as well as $\chi _{\varphi }$ and $\chi _{\sigma
} $ represent the same quantities as in the system (1.1)-(1.3). In that paper,
by using Galerkin approximation well-posedness of the model (1.5)-(1.8) and
its quasi-static version ($\kappa =0$) was established for
regular potentials with quadratic growth. On the other hand, in \cite%
{garcke4}, well-posedness of the problem (1.5)-(1.7) with Dirichlet boundary
conditions was proved for regular potentials with higher polynomial growth
and even for singular potentials. However, in \cite{garcke3}, the authors
were not able to determine whether the weak solutions of the system (1.5)-(1.8)
converges to weak solutions of the quasi-static model as $\kappa \rightarrow
0$. In that case, well-posedness of the quasi-static model was shown
independently from (1.5)-(1.8). Conversely, if (1.5)-(1.7) is equipped with
Dirichlet boundary conditions, it was proven in \cite{garcke4} that a quasistatic model can be obtained from (1.5)-(1.7) as a result of the limit $\kappa \rightarrow 0$. Moreover, long-time behavior of the model (1.5)-(1.8) without chemotaxis has recently been studied in \cite{MRS}.

In the case $\chi _{\varphi }=0$,
the model (1.1)-(1.4) with $m\left( \varphi \right) =n\left( \varphi \right)
=\chi _{\sigma }=1$ was investigated in \cite{frigieri}. In that paper, well-posedness of the problem in  $ H^{1}\left( \Omega\right) \times L^{2}\left( \Omega\right)$ and the attractor in a suitably defined subspace are established. In the
paper \cite{colli1}-\cite{colli2}, problem was considered with additional viscosity regularization term. In \cite{colli1}, the authors proved the existence and
uniqueness of strong solutions of the problem under the conditions $P:\mathbb{R}\rightarrow \mathbb{R}$ is nonnegative, bounded and Lipschitz continuous and $\Psi =\Psi _{1}+\Psi_{2}$ where $\Psi _{1}:\mathbb{R}\rightarrow \left[ 0,+\infty \right] $ is convex, proper lower semicontinuous, $\Psi _{2}\in C^{1}\left(\mathbb{R}\right) $ is nonnegative and $\Psi _{2}^{\prime }$ is Lipschitz continuous.
Moreover, the authors analyzed the long-time dynamics of the problem in terms of
an $ \omega $-limit set. In \cite{colli2}, \ some of the
results obtained in \cite{colli1} are extended to the case that the viscosity coefficients are independent from each other. Additionally, some assumptions on
the initial data were weakened while assuming the same conditions
on $P$ and $\Psi $. The formally matched asymptotic limit of a quasi-static variant of the diffuse-interface tumor-growth model (1.1)-(1.4) has been studied in \cite{HKNZ}.

In this paper, different from \cite{frigieri} and other previous works, we consider the problem in the case $ \chi _{\varphi }>0 $. Our main novelties are as the following:
\begin{itemize}
	\item We obtain an asymptotic compactness result in whole phase-space $ H^{1}\left( \Omega\right) \times L^{2}\left( \Omega\right)$. This result yields that $ \omega$-limit sets of bounded sets are nonempty, compact, invariant (see Lemma 5.6). Comparing with previous works, this will be the first result on the long-time behavior of the problem (1.1)-(1.4) in the whole phase-space.
	\item We analyze the corresponding stationary problem in $ Z_{M} $, which is a closed subspace of $  H^{1}\left( \Omega\right) \times L^{2}\left( \Omega\right) $ defined with the help of mass conservation (see (5.32)). We obtain that the set of stationary points is non empty and bounded in $ Z_{M} $.
	\item  Considering the asymptotic compactness of the system, boundedness of the set of stationary points and the existence of a strict Lyapunov functional (gradient property), we establish the existence of the global attractor in $ Z_{M} $. Moreover, these properties yield that the global attractor equals the unstable manifold (see Definition 5.3) emanating from the set of stationary points.
	\item In this paper, we also obtain that the global attractor is finite dimensional. To the best of our knowledge, this is the first result regarding the dimension of global attractor for the tumor growth model (1.1)-(1.4) even for the case  $ \chi _{\varphi }=0 $.
\end{itemize}
The plan of the paper is as follows. In Section 2, we give some notation and preliminaries. In Section 3, we introduce the assumptions and state the well-posedness results. In Section 4, we prove the results given in Section 3. In Section 5, we established existence of the smooth global attractor. Finally, in the last section we show that the fractal dimension of the global attractor is bounded.
 
\section{Notation and Preliminaries}
At first, we want to fix some notation.\newline
For a Banach space $X$, we use the symbol $\left\Vert .\right\Vert _{X}$ for
its norm, $\overset{\ast }{X}$ for its dual space and $\left\langle
\left\langle .,.\right\rangle \right\rangle $ for the duality pairing between $%
\overset{\ast }{X}$ and $X$. Also, we will use the notation $\left\langle
.,.\right\rangle _{X}$ for the inner product in $X$. The inner product in $L^{2}\left( \Omega \right) $ will be denoted $\left\langle
.,.\right\rangle $.\newline
For every $f\in \overset{\ast }{H^{1}}\left( \Omega \right)$, we introduce the mean value by%
\begin{equation*}
\overline{f}:=\frac{1}{\left| \Omega\right|   }\left\langle \left\langle f,1\right\rangle
\right\rangle \text{ for every }f\in \overset{\ast }{H^{1}}\left( \Omega \right)\text{.}
\end{equation*}%
Moreover, we introduce the operator $A:H^{1}\left( \Omega \right)
\rightarrow \overset{\ast }{H^{1}}\left( \Omega \right) $ such that%
\begin{equation*}
Au=-\Delta u+u\text{ and }D\left( A\right) =\left\{ \varphi \in H^{2}\left(
\Omega \right) :\partial _{\nu }\varphi =0\text{ on }\Gamma \right\} \text{.}
\end{equation*}%
It is worth noting that the restriction of $A$ on $D\left( A\right) $ is an
isomorphism from $D\left( A\right) $ onto $L^{2}\left( \Omega \right) $, i.e. $D(A)=A^{-1}(L^{2}(\Omega))$.
In addition, following identities hold:%
\begin{equation*}
\begin{array}{ll}
\left\langle \left\langle Au,A^{-1}v^{\ast }\right\rangle \right\rangle
=\left\langle \left\langle v^{\ast },u\right\rangle \right\rangle &\text{ for
	every }u\in H^{1}\left( \Omega \right) \text{, }v^{\ast }\in \overset{\ast }{%
	H^{1}}\left( \Omega \right),\\
\left\langle \left\langle u^{\ast },A^{-1}v^{\ast }\right\rangle
\right\rangle =\left\langle u^{\ast },v^{\ast }\right\rangle _{\overset{\ast 
	}{H^{1}}\left( \Omega \right) }&\text{ for every }u^{\ast }\text{, }v^{\ast
}\in \overset{\ast }{H^{1}}\left( \Omega \right),
\end{array}
\end{equation*}%
where $\left\langle .,.\right\rangle _{\overset{\ast }{H^{1}}\left( \Omega
\right) }$ is the dual inner product in $\overset{\ast }{H^{1}}\left( \Omega
\right) $ corresponding to the usual inner product in $H^{1}\left( \Omega
\right) $. Domain of the inverse operator $ A^{-1} $ is defined as $ D\left( A^{-1} \right)=  \overset{\ast }{D\left(A \right) }  $ (see \cite{miranville}). Also, we have%
\begin{equation*}
\left\langle \left\langle v^{\ast },u\right\rangle \right\rangle
=\int\limits_{\Omega }v^{\ast }u\text{ if }v^{\ast }\in L^{2}\left( \Omega
\right) \text{, }
\end{equation*}%
and%
\begin{equation*}
\frac{d}{dt}\left\Vert v^{\ast }\right\Vert _{\overset{\ast }{H^{1}}\left(
\Omega \right) }^{2}=2\left\langle \left\langle \partial _{t}v^{\ast
},A^{-1}v^{\ast }\right\rangle \right\rangle \text{ for every }v^{\ast }\in
H^{1}\left( 0,T;\overset{\ast }{H^{1}}\left( \Omega \right) \right) .
\end{equation*}%
Furthermore, as a consequence of classical spectral theory the operator $A$
has eigenvalues $\lambda _{j}$ with $0<\lambda _{1}\leq \lambda _{2}\leq ...$
and $\lambda _{j}\rightarrow \infty $, and a family of eigenfunctions $%
w_{j}\in D\left( A\right) $ such that $Aw_{j}=\lambda _{j}w_{j}$. The
sequence $\left\{ w_{j}\right\} $ generates an orthonormal basis in $%
L^{2}\left( \Omega \right) $ and also it is orthogonal in $H^{1}\left(
\Omega \right) $ and $D\left( A\right) $.

\section{Statement of the Problem and Well-posedness Results}

In this section we fix our assumptions and state the well-posedness results.\newline

\textbf{Assumptions}

\begin{description}
 \item[(A1)]  The coefficients $\chi _{\varphi }$ and $\chi _{\sigma }$ are
assumed to be fixed, positive constants in the whole paper.

\item[(M)] The mobilities $m$, $n$ \ are continuous on $%
\mathbb{R}
$ satisfying either one of the following,

\begin{description}
\item[(M1)] For some positive constants $m_{0},$ $m_{1},$ $n_{0}$ and $n_{1}$%
\begin{equation}
m_{0}\leq m\left( s\right) \leq m_{1}\text{, \ \ }n_{0}\leq n\left( s\right)
\leq n_{1}\text{\ }\ \forall s\in 
\mathbb{R}
;\text{\ }  \tag{3.1}
\end{equation}

\item[(M2)] 
\begin{equation}
m\left( s\right) =n\left( s\right) =1.  \tag{3.2}
\end{equation}
\end{description}

\item[($\mathbf{\Psi }$)] The potential $\Psi \in C^{2}\left( 
\mathbb{R}
\right) $ can be written as%
\begin{equation}
\Psi \left( s\right) =\Psi _{0}\left( s\right) +\lambda \left( s\right) 
\tag{3.3}
\end{equation}%
where $\Psi _{0}\in C^{2}\left( 
\mathbb{R}
\right) $ and $\lambda \in C^{2}\left( 
\mathbb{R}
\right) $ satisfies $\left\vert \lambda ^{\prime \prime }\left( s\right)
\right\vert \leq \alpha $, for all $s\in 
\mathbb{R}
$, and for some constant $\alpha \geq 0$. Moreover, we assume%
\begin{equation}
c_{1}\left( 1+\left\vert s\right\vert ^{\rho -2}\right) \leq \Psi
_{0}^{\prime \prime }\left( s\right) \leq c_{2}\left( 1+\left\vert
s\right\vert ^{\rho -2}\right),  \tag{3.4}
\end{equation}%
\begin{equation}
\Psi \left( s\right) \geq R_{1}\left\vert s\right\vert ^{2}-R_{2}  \tag{3.5}
\end{equation}%
for all $s\in 
\mathbb{R}
$, with $c_{1}$, $c_{2}$, $R_{1}>\frac{2\chi _{\varphi }^{2}}{\chi _{\sigma }%
},$ $R_{2}\in 
\mathbb{R}
$ and with $\rho \in \left[ 2,6\right) $.

\item[(P)] The interpolation function $p\in C_{loc}^{0,1}\left( 
\mathbb{R}
\right) $ satisfies either one of the following,

\begin{description}
\item[(P1)] 
\begin{equation}
0\leq p\left( s\right) \leq c_{3}\left( 1+\left\vert s\right\vert ^{q}\right)
\tag{3.6}
\end{equation}
for all $s\in 
\mathbb{R}
$, with $c_{3}>0$ and $q\in \left[ 1,9\right) $\thinspace ;

\item[(P2)] $p>0$ and%
\begin{equation}
\left\vert p^{\prime }\left( s\right) \right\vert \leq c_{4}\left(
1+\left\vert s\right\vert ^{q-1}\right)  \tag{3.7}
\end{equation}%
for all $s\in 
\mathbb{R}
$, with $c_{4}>0$ and with $q\in \left[ 1,4\right] .$
\end{description}
\end{description}
\begin{remark}
	Assumption (3.5) is only needed if $ \rho=2 $.
\end{remark}
Before stating the existence theorems we will specify the definition of weak
and strong solutions.

\begin{definition}
Let $\left( \varphi _{0},\sigma _{0}\right) \in H^{1}\left( \Omega \right)
\times L^{2}\left( \Omega \right) $ and $T\in \left( 0,\infty \right) $ be
given. A pair $\left( \varphi ,\sigma \right) $, satisfying the properties%
\begin{align}
\left( \varphi ,\sigma \right) \in L^{\infty }\left( 0,T;H^{1}\left( \Omega
\right) \times L^{2}\left( \Omega \right) \right) \cap L^{2}\left(
0,T;H^{2}\left( \Omega \right) \times H^{1}\left( \Omega \right) \right) , 
\tag{3.8}\\
\left( \varphi _{t},\sigma _{t}\right) \in L^{r}\left( 0,T;D\left(
A^{-1}\right) \times D\left( A^{-1}\right) \right),   \tag{3.9}\\
\mu :=-\Delta \varphi +\Psi ^{\prime }\left( \varphi \right) -\chi _{\varphi
}\sigma \in L^{2}\left( 0,T;H^{1}\left( \Omega \right) \right) ,  \tag{3.10}\\
\left( \varphi \left( 0\right) ,\sigma \left( 0\right) \right) =\left(
\varphi _{0},\sigma _{0}\right) ,  \tag{3.11}
\end{align}%
for some $r>1$, is called a weak solution of the problem (1.1)-(1.5) on $%
\left[ 0,T\right] \times \Omega ,$ iff%
\begin{align}
\left\langle \left\langle \varphi _{t},\eta \right\rangle \right\rangle
+\left\langle m\left( \varphi \right) \nabla \mu ,\nabla \eta \right\rangle
&=\left\langle p\left( \varphi \right) \left( \chi _{\sigma }\sigma +\chi
_{\varphi }\left( 1-\varphi \right) -\mu \right) ,\eta \right\rangle  
\tag{3.12}\\
\left\langle \left\langle \sigma _{t},\xi \right\rangle \right\rangle
+\left\langle n\left( \varphi \right) \left( \chi _{\sigma }\nabla \sigma
-\chi _{\varphi }\nabla \varphi \right) ,\nabla \xi \right\rangle
&=-\left\langle p\left( \varphi \right) \left( \chi _{\sigma }\sigma +\chi
_{\varphi }\left( 1-\varphi \right) -\mu \right) ,\xi \right\rangle  
\tag{3.13}
\end{align}%
holds in $\left( 0,T\right) \times \Omega $, for every $\eta ,$ $\xi \in
D\left( A\right) .$ \newline
If the pair $\left( \varphi ,\sigma \right) $ also satisfies the
properties
\begin{align}
& \varphi \in L^{\infty }\left(0,T;H^{3}\left( \Omega
\right)\right), &   \varphi _{t} \in L^{2}\left( 0,T;H^{1}\left(
\Omega \right) \right) \tag{3.14}\\
& \sigma \in L^{\infty }\left(0,T;H^{1}\left( \Omega
\right)\right)\cap L^{2}\left( 0,T;H^{2}\left( \Omega \right) \right), &\sigma _{t} \in L^{2}\left( 0,T; L^{2}\left( \Omega \right) \right)   \tag{3.15}\\
&\mu \in L^{\infty }\left( 0,T;H^{1}\left( \Omega \right) \right)\cap L^{2}\left( 0,T;H^{3}\left( \Omega \right) \right) , \tag{3.16}
\end{align}
\ then, it is called a strong solution of the problem (1.1)-(1.5) on $\left[
0,T\right] \times \Omega $.
\end{definition}

\begin{remark}
The initial condition (3.11) is meaningful because from the regularity of
weak solution it follows that%
\begin{equation*}
\left( \varphi ,\sigma \right) \in C_{w}\left( \left[ 0,T\right]
;H^{1}\left( \Omega \right) \times L^{2}\left( \Omega \right) \right) .
\end{equation*}
\end{remark}

Our result on the existence of weak solutions is stated in the following
theorem:

\begin{theorem}
Assume that the conditions (M1), ($\Psi $) and (P1) are satisfied. Then for
every $\left( \varphi _{0},\sigma _{0}\right) \in H^{1}\left( \Omega \right)
\times L^{2}\left( \Omega \right) $, the problem (1.1)-(1.5) has a weak
solution such that%
\begin{align}
&\varphi \in L^{2}\left( 0,T;H^{3}\left( \Omega \right) \right) \text{, }&\Psi
\left( \varphi \right) \in L^{\infty }\left( 0,T;L^{1}\left( \Omega \right)
\right) ,  \tag{3.17}\\
&\nabla N_{\sigma }\in L^{2}\left( 0,T;L^{2}\left( \Omega \right) \right) 
\text{, }&\sqrt{p\left( \varphi \right) }\left( N_{\sigma }-\mu \right) \in
L^{2}\left( 0,T;L^{2}\left( \Omega \right) \right) .  \tag{3.18}
\end{align}%
Moreover, following energy inequality holds for the weak solutions,%
\begin{equation}
E\left( \varphi \left( t\right) ,\sigma \left( t\right) \right)
+\int\limits_{0}^{t}\int\limits_{\Omega }\left( m\left( \varphi \right)
\left\vert \nabla \mu \right\vert ^{2}+n\left( \varphi \right) \left\vert
\nabla N_{\sigma }\right\vert ^{2}+p\left( \varphi \right) \left( N_{\sigma
}-\mu \right) ^{2}\right) dxd\tau \leq E\left( \varphi _{0},\sigma
_{0}\right)  \tag{3.19}
\end{equation}%
where $E\left( \varphi ,\sigma \right) =\frac{1}{2}\left\Vert \nabla \varphi
\right\Vert _{L^{2}\left( \Omega \right) }^{2}+\int \Psi \left( \varphi
\right) +\frac{\chi _{\sigma }}{2}\left\Vert \sigma \right\Vert
_{L^{2}\left( \Omega \right) }^{2}+\chi _{\varphi }\int \sigma \left(
1-\varphi \right) $ and $N_{\sigma }=\chi _{\sigma }\sigma +\chi _{\varphi
}\left( 1-\varphi \right) $.\newline
Furthermore, if (P2) is satisfied instead of (P1) , it follows that%
\begin{equation}
\varphi _{t}\text{, }\sigma _{t}\in L^{2}\left( 0,T;\left( H^{1}\left(
\Omega \right) \right) ^{\ast }\right)  \tag{3.20}
\end{equation}%
and in (3.19) we have "$=$" instead of "$\leq $". Moreover,in this case the
weak formulation (3.12), (3.13) is sat\i sf\i ed also for all $\eta ,\xi \in
H^{1}\left( \Omega \right) $.
\end{theorem}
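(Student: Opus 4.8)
The natural approach is a Faedo--Galerkin approximation built on the eigenbasis $\{w_j\}$ of the operator $A=-\Delta+I$ of Section~2, which is orthogonal simultaneously in $L^2(\Omega)$, $H^1(\Omega)$ and $D(A)$. Looking for $\varphi_n=\sum_{j=1}^{n}a_j^n(t)w_j$ and $\sigma_n=\sum_{j=1}^{n}b_j^n(t)w_j$, and setting $\mu_n:=-\Delta\varphi_n+P_n\Psi'(\varphi_n)-\chi_\varphi\sigma_n$ with $P_n$ the $L^2$-orthogonal projection onto $V_n:=\operatorname{span}\{w_1,\dots,w_n\}$ (note that $1\in V_1$, since the constants span the eigenspace for the smallest eigenvalue of $A$), one projects equations (1.1) and (1.3) onto $V_n$. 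Because $P_n$ commutes with $A$, this becomes an ODE system for $(a^n,b^n)$ whose right-hand side is locally Lipschitz by (M), ($\Psi$) and (P), so a local-in-time solution exists; it will be global thanks to the a priori bound below.

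The cornerstone is the discrete energy identity. Writing $N_{\sigma,n}:=\chi_\sigma\sigma_n+\chi_\varphi(1-\varphi_n)$, so that $\nabla N_{\sigma,n}=\chi_\sigma\nabla\sigma_n-\chi_\varphi\nabla\varphi_n$ and $\mu_n,N_{\sigma,n}\in V_n$, one tests the $\varphi_n$-equation by $\mu_n$, the $\sigma_n$-equation by $N_{\sigma,n}$, and adds; using $\langle\partial_t\varphi_n,\mu_n\rangle=\tfrac{d}{dt}\big(\tfrac12\|\nabla\varphi_n\|^2+\int\Psi(\varphi_n)\big)-\chi_\varphi\langle\partial_t\varphi_n,\sigma_n\rangle$ and the companion identity for $\langle\partial_t\sigma_n,N_{\sigma,n}\rangle$, the reaction contributions collapse to $-\int p(\varphi_n)(N_{\sigma,n}-\mu_n)^2$, and one obtains
\begin{equation*}
\frac{d}{dt}E(\varphi_n,\sigma_n)+\int_\Omega\big(m(\varphi_n)|\nabla\mu_n|^2+n(\varphi_n)|\nabla N_{\sigma,n}|^2+p(\varphi_n)(N_{\sigma,n}-\mu_n)^2\big)\,dx=0 .
\end{equation*}
Estimate (3.5) together with Young's inequality applied to $\chi_\varphi\int\sigma_n(1-\varphi_n)$ --- exactly where $R_1>2\chi_\varphi^2/\chi_\sigma$ is needed --- yields the coercivity $E(\varphi_n,\sigma_n)\ge c\big(\|\varphi_n\|_{H^1}^2+\|\sigma_n\|_{L^2}^2\big)-C$, while (M1) and $p\ge0$ make the dissipative terms nonnegative. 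Integrating in time thus gives uniform bounds for $\varphi_n$ in $L^\infty(0,T;H^1(\Omega))$, $\sigma_n$ in $L^\infty(0,T;L^2(\Omega))$, $\Psi(\varphi_n)$ in $L^\infty(0,T;L^1(\Omega))$, and $\nabla\mu_n$, $\nabla N_{\sigma,n}$, $\sqrt{p(\varphi_n)}(N_{\sigma,n}-\mu_n)$ in $L^2(0,T;L^2(\Omega))$; in particular the approximations are global and $\nabla\sigma_n$ is bounded in $L^2(0,T;L^2(\Omega))$.

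The remaining bounds come from an elliptic bootstrap. Integrating the defining identity of $\mu_n$ over $\Omega$ controls $\overline{\mu_n}$ by $\|\Psi'(\varphi_n)\|_{L^1}$ and $\|\sigma_n\|_{L^1}$, which are bounded since $|\Psi'(s)|\le C(1+|s|^{\rho-1})$ with $\rho<6$ and $\varphi_n$ is bounded in $L^\infty(0,T;L^6(\Omega))$; with Poincar\'e--Wirtinger this gives $\mu_n$ bounded in $L^2(0,T;H^1(\Omega))$. Testing the identity $-\Delta\varphi_n+P_n\Psi_0'(\varphi_n)=\mu_n+\chi_\varphi\sigma_n-P_n\lambda'(\varphi_n)$ by $P_n\Psi_0'(\varphi_n)$, and using $\langle-\Delta\varphi_n,P_n\Psi_0'(\varphi_n)\rangle=\int\Psi_0''(\varphi_n)|\nabla\varphi_n|^2\ge0$ (valid because $-\Delta\varphi_n\in V_n$ and $\Psi_0''\ge c_1>0$), bounds $P_n\Psi_0'(\varphi_n)$, hence $\Delta\varphi_n$, in $L^2(0,T;L^2(\Omega))$, so $\varphi_n$ is bounded in $L^2(0,T;H^2(\Omega))$; differentiating and estimating $\Psi''(\varphi_n)\nabla\varphi_n$ in $L^2(0,T;L^2(\Omega))$ by Gagliardo--Nirenberg interpolation between the $L^\infty(0,T;H^1)$ and $L^2(0,T;H^2)$ bounds --- which is where the restriction $\rho<6$ is fully consumed --- upgrades this to $L^2(0,T;H^3(\Omega))$, i.e.\ (3.17). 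Finally, in each evolution equation the divergence term lies in $L^2(0,T;(H^1(\Omega))^*)$; under (P1) the reaction term lies in $L^r(0,T;L^1(\Omega))$ for some $r>1$ --- obtained by writing it as $\sqrt{p(\varphi_n)}\cdot\big(\sqrt{p(\varphi_n)}(N_{\sigma,n}-\mu_n)\big)$ and controlling the first factor via interpolation of the $L^\infty(H^1)\cap L^2(H^3)$ bounds for $\varphi_n$, where $q<9$ enters --- while $L^1(\Omega)\hookrightarrow D(A^{-1})$ since $D(A)\hookrightarrow L^\infty(\Omega)$ in three dimensions; hence $\partial_t\varphi_n,\partial_t\sigma_n$ are bounded in $L^r(0,T;D(A^{-1}))$. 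Under (P2), the sharper bound on $p'$ gives $p(\varphi_n)(N_{\sigma,n}-\mu_n)$ bounded in $L^2(0,T;L^{6/5}(\Omega))$, whence $\partial_t\varphi_n,\partial_t\sigma_n$ are bounded in $L^2(0,T;(H^1(\Omega))^*)$, i.e.\ (3.20).

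We pass to the limit along a subsequence. By the Aubin--Lions--Simon lemma ($H^3$ compactly embedded in $H^2\hookrightarrow D(A^{-1})$, and $H^1$ in $L^2\hookrightarrow D(A^{-1})$), $\varphi_n\to\varphi$ strongly in $L^2(0,T;H^2(\Omega))$ and $\sigma_n\to\sigma$ strongly in $L^2(0,T;L^2(\Omega))$, hence a.e.\ in $(0,T)\times\Omega$; moreover $\mu_n\rightharpoonup\mu$ in $L^2(0,T;H^1(\Omega))$, $\varphi_n\rightharpoonup\varphi$ in $L^2(0,T;H^3(\Omega))$, and the time derivatives converge weakly. The a.e.\ convergence combined with the uniform bounds (in spaces strictly larger than needed) yields, by Vitali's theorem, $\Psi'(\varphi_n)\to\Psi'(\varphi)$ and $p(\varphi_n)\to p(\varphi)$ strongly in suitable $L^s$-spaces; together with the weak convergence of $N_{\sigma,n}-\mu_n$ in $L^2(0,T;L^6(\Omega))$ and the strong--weak product lemma this identifies all nonlinear terms, so one passes to the limit in the weak formulations (3.12)--(3.13) tested against $\eta,\xi\in D(A)$ (for which $P_n\eta\to\eta$, $P_n\xi\to\xi$ in $D(A)\hookrightarrow L^\infty(\Omega)$) and in the identity defining $\mu$. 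Inequality (3.19) then follows from the discrete energy identity and weak lower semicontinuity of the norms appearing in $E$ and in the dissipation, and (3.18) is inherited from the uniform bounds. Under (P2), the improved regularity $\partial_t\varphi,\partial_t\sigma\in L^2(0,T;(H^1(\Omega))^*)$, $\mu,N_\sigma\in L^2(0,T;H^1(\Omega))$ and $\Psi'(\varphi)\in L^2(0,T;H^1(\Omega))$ allows one to re-derive the energy balance directly for the limit pair by the chain rule in Sobolev spaces, giving equality in (3.19), and to extend (3.12)--(3.13) to all $\eta,\xi\in H^1(\Omega)$ by density. The step requiring genuine care --- and the expected main obstacle --- is precisely this critical-exponent accounting in $\mathbb{R}^3$: closing the bootstrap to $\varphi\in L^2(0,T;H^3(\Omega))$ when $\rho$ is close to $6$ (so that $\Psi''(\varphi)\nabla\varphi\in L^2(0,T;L^2(\Omega))$ only survives a careful interpolation using $H^2\hookrightarrow W^{1,6}$ and an Agmon-type inequality), and handling the reaction term when $q$ is close to $9$, where the factorization through $\sqrt{p(\varphi)}(N_\sigma-\mu)\in L^2(0,T;L^2(\Omega))$ and the embedding $D(A)\hookrightarrow L^\infty(\Omega)$ are both essential.
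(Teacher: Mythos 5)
Your scheme is essentially the paper's: the same Galerkin basis (with $w_1$ constant so that $N_{\sigma,n}$ is admissible), the same discrete energy identity, coercivity from (3.5) with $R_1>2\chi_\varphi^2/\chi_\sigma$, control of $\overline{\mu_n}$ plus Poincar\'e--Wirtinger, time-derivative bounds in $L^r(0,T;D(A^{-1}))$ (resp.\ $L^2(0,T;(H^1)^\ast)$ under (P2)), Aubin--Lions, identification of $p(\varphi_n)(N_{\sigma_n}-\mu_n)$ by strong--weak pairing, lower semicontinuity for (3.19), and the test-with-$(\mu,N_\sigma)$ chain-rule argument for equality under (P2). Your route to the $H^2$ bound --- testing $\mu_n=-\Delta\varphi_n+P_n\Psi'(\varphi_n)-\chi_\varphi\sigma_n$ by $P_n\Psi_0'(\varphi_n)$ and using that $-\Delta\varphi_n\in V_n$ together with $\Psi_0''>0$ --- is a legitimate variant of the paper's argument and even works uniformly in $\rho$.

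The genuine gap is the passage to $\varphi_n\in L^2(0,T;H^3(\Omega))$ when $\rho\in(4,6)$. You claim $\Psi''(\varphi_n)\nabla\varphi_n\in L^2(Q)$ follows by Gagliardo--Nirenberg/Agmon interpolation ``between the $L^\infty(0,T;H^1)$ and $L^2(0,T;H^2)$ bounds''. This cannot close beyond $\rho=4$: any interpolation estimate of the form $\|\,|\varphi|^{\rho-2}\nabla\varphi\|_{L^2(\Omega)}\le C\|\varphi\|_{H^1}^{a}\|\varphi\|_{H^2}^{b}$ with $a+b=\rho-1$ forces, by the Gagliardo--Nirenberg scaling relations in dimension three, $b=\tfrac{\rho-2}{2}$, so after squaring the $H^2$ norm appears with power $\rho-2$, which the $L^2(0,T;H^2)$ bound integrates in time only when $\rho\le4$ (equivalently, the paper's $\Psi''(\varphi_n)\in L^5(Q)$, $\varphi_n\in L^{10}(Q)$, $\nabla\varphi_n\in L^{10/3}(Q)$ bookkeeping is exactly the $\rho\le4$ case). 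This is precisely why the paper proves the theorem in two steps: Step 1 for $\rho\le4$, and Step 2 for $4<\rho<6$ by replacing $\Psi$ with the quadratic-growth approximations $\Psi_m$ of Lemma 4.1 (using $\Psi_m''\ge-k_5$ for the $H^2$ bound, a bootstrap for $H^3$, and then $m\to\infty$), a device your proposal omits. The gap is repairable within your single-step scheme, but by a different interpolation than the one you state: bound $\|\Psi''(\varphi_n)\nabla\varphi_n\|_{L^2}\le C\bigl(1+\|\varphi_n\|_{L^\infty}^{\rho-2}\bigr)\|\nabla\varphi_n\|_{L^2}$ and use $\|\varphi_n\|_{L^\infty}\le C\|\varphi_n\|_{H^1}^{3/4}\|\varphi_n\|_{H^3}^{1/4}$, so the unknown $L^2(0,T;H^3)$ norm enters the right-hand side of the elliptic estimate with the subcritical power $(\rho-2)/2<2$ and can be absorbed by Young's inequality; either this absorption or the paper's regularization of $\Psi$ must be supplied. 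As written, your bootstrap proves the theorem only for $\rho\le4$; the remainder of the proposal is sound at the level of detail given.
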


\begin{theorem}
Let the conditions (M2), ($\Psi $) and (P2) hold. Then, for every initial
data $\left( \varphi _{0},\sigma _{0}\right) \in H^{1}\left( \Omega \right)
\times L^{2}\left( \Omega \right) $ and for every $T>0$, the weak solution
of problem (1.1)-(1.5) specified by Theorem 3.1 is unique. Moreover, if $%
\left( \varphi _{i},\sigma _{i}\right) $, $i=1,2$ are weak solutions of the
problem (1.1)-(1.5) with initial data $\left( \varphi _{0i},\sigma
_{0i}\right) \in H^{1}\left( \Omega \right) \times L^{2}\left( \Omega
\right) $,respectively, then%
\begin{equation*}
\left\Vert \varphi _{2}\left( t\right) -\varphi _{1}\left( t\right)
\right\Vert _{\left( H^{1}\left( \Omega \right) \right) ^{\ast }}+\left\Vert
\sigma _{2}\left( t\right) -\sigma _{1}\left( t\right) \right\Vert _{\left(
H^{1}\left( \Omega \right) \right) ^{\ast }}+\left\Vert \varphi _{2}\left(
t\right) -\varphi _{1}\left( t\right) \right\Vert _{L^{2}\left(
0,t;H^{1}\left( \Omega \right) \right) }
\end{equation*}%
\begin{equation*}
+\left\Vert \sigma _{2}\left( t\right) -\sigma _{1}\left( t\right)
\right\Vert _{L^{2}\left( 0,t;L^{2}\left( \Omega \right) \right) }\leq
\Lambda \left( t\right) \left( \left\Vert \varphi _{02}-\varphi
_{01}\right\Vert _{\left( H^{1}\left( \Omega \right) \right) ^{\ast
}}+\left\Vert \sigma _{02}-\sigma _{01}\right\Vert _{\left( H^{1}\left(
\Omega \right) \right) ^{\ast }}\right)
\end{equation*}%
where $\Lambda $ is a continuous positive \ function which depends on the
norms of the initial data and $\Psi $, $p$, $\Omega $ and $T$.
\end{theorem}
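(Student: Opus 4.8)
The plan is to prove the stated continuous-dependence estimate directly; uniqueness is then the special case $(\varphi_{01},\sigma_{01})=(\varphi_{02},\sigma_{02})$. Put $\|\cdot\|_{*}:=\|\cdot\|_{\left(H^{1}(\Omega)\right)^{*}}$ and write $\hat\varphi:=\varphi_{2}-\varphi_{1}$, $\hat\sigma:=\sigma_{2}-\sigma_{1}$, $\hat\mu:=\mu_{2}-\mu_{1}$, $\hat N:=\chi_{\sigma}\hat\sigma-\chi_{\varphi}\hat\varphi$ and $\hat S:=p(\varphi_{2})(N_{\sigma,2}-\mu_{2})-p(\varphi_{1})(N_{\sigma,1}-\mu_{1})$. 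Subtracting the two systems and using (M2) (so $m\equiv n\equiv1$),
\[
\hat\varphi_{t}=\Delta\hat\mu+\hat S,\qquad \hat\mu=-\Delta\hat\varphi+\big(\Psi'(\varphi_{2})-\Psi'(\varphi_{1})\big)-\chi_{\varphi}\hat\sigma,\qquad \hat\sigma_{t}=\chi_{\sigma}\Delta\hat\sigma-\chi_{\varphi}\Delta\hat\varphi-\hat S,
\]
with homogeneous Neumann conditions; the single source $\hat S$ enters the first and third equations with opposite signs, and is conveniently split as $\hat S=[p(\varphi_{2})-p(\varphi_{1})](N_{\sigma,2}-\mu_{2})+p(\varphi_{1})(\hat N-\hat\mu)$.

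The first step is to test the $\hat\varphi$-equation with $A^{-1}\hat\varphi$ and the $\hat\sigma$-equation with $A^{-1}\hat\sigma$ — both admissible in $H^{1}(\Omega)$, since under (P2) the weak formulation (3.12)--(3.13) holds for all test functions in $H^{1}(\Omega)$ — then to add the two identities and use the operator relations of Section 2 together with the Neumann conditions. On the left one obtains
\[
\tfrac12\tfrac{d}{dt}\big(\|\hat\varphi\|_{*}^{2}+\|\hat\sigma\|_{*}^{2}\big)+\|\nabla\hat\varphi\|_{L^{2}}^{2}+\chi_{\sigma}\|\hat\sigma\|_{L^{2}}^{2}+\big\langle\Psi_{0}'(\varphi_{2})-\Psi_{0}'(\varphi_{1}),\hat\varphi\big\rangle,
\]
where the last term is $\ge0$ by convexity of $\Psi_{0}$ (note $\Psi_{0}''\ge c_{1}>0$), while on the right one is left with: terms of the form $\pm\|\hat\varphi\|_{*}^{2}$, $\pm\|\hat\sigma\|_{*}^{2}$; the cross term $\chi_{\varphi}\langle\hat\sigma,\hat\varphi\rangle$ and the Lipschitz remainder $\langle\lambda'(\varphi_{2})-\lambda'(\varphi_{1}),\hat\varphi\rangle$, controlled by $\alpha\|\hat\varphi\|_{L^{2}}^{2}$; the bilinear pairings produced by the $\hat N$- and $\chi_{\varphi}\hat\sigma$-parts of $\hat\mu$, whose top-order contributions (such as $\langle\nabla\hat\varphi,\nabla A^{-1}\hat\varphi\rangle$) lose two derivatives against $A^{-1}(\cdot)$ and are harmless; and the source contribution $\langle\langle\hat S,A^{-1}\hat\varphi\rangle\rangle-\langle\langle\hat S,A^{-1}\hat\sigma\rangle\rangle$. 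Every term except the source contribution is disposed of by Cauchy--Schwarz, the interpolation $\|v\|_{L^{2}}^{2}\le\|v\|_{*}\|v\|_{H^{1}(\Omega)}$ (and $\rho<6$ for the $\Psi'$-remainders) and Young's inequality, leaving $\varepsilon\big(\|\hat\varphi\|_{H^{1}(\Omega)}^{2}+\|\hat\sigma\|_{L^{2}}^{2}\big)+C_{\varepsilon}\big(\|\hat\varphi\|_{*}^{2}+\|\hat\sigma\|_{*}^{2}\big)$.

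The main obstacle is the source contribution. Substituting $\hat\mu$ once more inside $p(\varphi_{1})(\hat N-\hat\mu)$ isolates the top-order pieces $\langle p(\varphi_{1})\Delta\hat\varphi,A^{-1}\psi\rangle$ ($\psi\in\{\hat\varphi,\hat\sigma\}$), which are integrated by parts into $-\int_{\Omega}\nabla\hat\varphi\cdot\big(A^{-1}\psi\,\nabla p(\varphi_{1})+p(\varphi_{1})\,\nabla A^{-1}\psi\big)$ and estimated by $\|\nabla\hat\varphi\|_{L^{2}}\big(\|\nabla p(\varphi_{1})\|_{L^{3}}\,\|A^{-1}\psi\|_{L^{6}}+\|p(\varphi_{1})\|_{L^{\infty}}\,\|\nabla A^{-1}\psi\|_{L^{2}}\big)$, using $\|A^{-1}\psi\|_{H^{1}(\Omega)}=\|\psi\|_{*}$; the remaining, lower-order pieces of $\hat S$ — those carrying $\Psi'(\varphi_{2})-\Psi'(\varphi_{1})$, $\hat\sigma$, $\hat N$ and the factor $[p(\varphi_{2})-p(\varphi_{1})](N_{\sigma,2}-\mu_{2})$ — are estimated by H\"older in the same spirit. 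All these bounds use $|p'(s)|\le c_{4}(1+|s|^{q-1})$, $|\Psi''(s)|\le c_{2}(1+|s|^{\rho-2})+\alpha$, the Sobolev embeddings $H^{1}(\Omega)\hookrightarrow L^{6}(\Omega)$, $H^{2}(\Omega)\hookrightarrow W^{1,6}(\Omega)$, $H^{s}(\Omega)\hookrightarrow L^{\infty}(\Omega)$ $(s>3/2)$, and the regularity $\varphi_{i}\in L^{\infty}(0,T;H^{1}(\Omega))\cap L^{2}(0,T;H^{3}(\Omega))\hookrightarrow L^{4}(0,T;H^{2}(\Omega))$, $\mu_{i}\in L^{2}(0,T;H^{1}(\Omega))$, $\sigma_{i}\in L^{\infty}(0,T;L^{2}(\Omega))\cap L^{2}(0,T;H^{1}(\Omega))$ provided by Theorem 3.1. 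The delicate point — and the place where the hypotheses $q\le4$ and $\rho<6$ are actually used — is the bookkeeping of H\"older exponents: one checks that the time-dependent weights arising in this way (for instance $\|\nabla p(\varphi_{i}(\cdot))\|_{L^{3}(\Omega)}^{2}$, $\|p(\varphi_{i}(\cdot))\|_{L^{\infty}(\Omega)}^{2}$ and $\|N_{\sigma,i}(\cdot)-\mu_{i}(\cdot)\|_{L^{6}(\Omega)}^{2}$) lie in $L^{1}(0,T)$, so that after Young's inequality the whole source contribution is dominated by $\varepsilon\big(\|\hat\varphi\|_{H^{1}(\Omega)}^{2}+\|\hat\sigma\|_{L^{2}}^{2}\big)+b(t)\big(\|\hat\varphi\|_{*}^{2}+\|\hat\sigma\|_{*}^{2}\big)$ with $\varepsilon$ arbitrarily small and $b\in L^{1}(0,T)$.

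Choosing $\varepsilon$ small enough to absorb all $\|\hat\varphi\|_{H^{1}(\Omega)}^{2}$ and $\|\hat\sigma\|_{L^{2}}^{2}$ contributions into the left side and writing $\Phi:=\|\hat\varphi\|_{*}^{2}+\|\hat\sigma\|_{*}^{2}$, the above collapses to
\[
\tfrac{d}{dt}\Phi+c\big(\|\hat\varphi\|_{H^{1}(\Omega)}^{2}+\|\hat\sigma\|_{L^{2}}^{2}\big)\le\beta(t)\,\Phi\qquad\text{a.e. in }(0,T),
\]
with $c>0$ and $\beta\in L^{1}(0,T)$ depending only on $\Psi$, $p$, $\Omega$, $T$ and the norms of the initial data (through the a priori bounds of Theorem 3.1). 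Gronwall's lemma gives $\Phi(t)\le\Phi(0)\exp\!\big(\int_{0}^{t}\beta\big)$, and integrating the inequality bounds $\int_{0}^{t}\big(\|\hat\varphi\|_{H^{1}(\Omega)}^{2}+\|\hat\sigma\|_{L^{2}}^{2}\big)$ by the same quantity; taking square roots (and $\sqrt{a^{2}+b^{2}}\le a+b$) yields the asserted estimate with $\Lambda(t)=C\exp\!\big(\tfrac12\int_{0}^{t}\beta\big)$, and the choice of equal initial data gives uniqueness. I expect the source-term bookkeeping of the third paragraph to be the only genuinely delicate step.
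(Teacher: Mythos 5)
Your proposal is correct and follows essentially the same route as the paper: testing the difference equations with $A^{-1}\hat\varphi$ and $A^{-1}\hat\sigma$, extracting coercivity in $H^{1}\times L^{2}$ (your convexity splitting $\Psi=\Psi_{0}+\lambda$ is just a cosmetic variant of the paper's bound $\langle G'(\varphi_{2})-G'(\varphi_{1}),\varphi\rangle\geq-\beta\|\varphi\|_{L^{2}}^{2}$ with $G(s)=\Psi(s)-\tfrac12 s^{2}$), splitting the source as $[p(\varphi_{2})-p(\varphi_{1})](N_{\sigma_{2}}-\mu_{2})+p(\varphi_{1})(\hat N-\hat\mu)$, and controlling it with the same $L^{1}(0,T)$ weights (the paper's $\alpha_{1}^{2},\alpha_{2}^{4/3}$, borrowed from \cite{frigieri}) before applying Gronwall. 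The source-term bookkeeping you flag as the delicate step is exactly what the paper delegates to the estimates [3.65], [3.72]--[3.74] of \cite{frigieri}, plus the extra chemotaxis term $\chi_{\varphi}\int_{\Omega}\sigma\varphi\,dx$ handled as in (4.40).
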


\begin{theorem}
Suppose (M2), ($\Psi $) and (P2) hold. Then, for every initial data $\left(
\varphi _{0},\sigma _{0}\right) \in H^{3}\left( \Omega \right) \times
H^{1}\left( \Omega \right) $ with $\partial _{\nu }\varphi _{0}=0$ on $%
\Gamma $ and for every $T>0$, problem (1.1)-(1.5) has a strong solution.
\end{theorem}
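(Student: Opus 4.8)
The plan is to improve, in several steps, the regularity of the weak solution supplied by Theorem 3.1 --- which, under (M2), ($\Psi$), (P2), is the unique weak solution by Theorem 3.2 --- through a chain of a priori estimates that I would justify on the Faedo--Galerkin approximations $(\varphi_n,\sigma_n)$ (in the eigenbasis $\{w_j\}$ of $A$) used to construct it, now started from $\varphi_n(0)=P_n\varphi_0$, $\sigma_n(0)=P_n\sigma_0$. Since $P_n$ commutes with $A$, these data converge to $(\varphi_0,\sigma_0)$ in $H^2(\Omega)\times H^1(\Omega)$, and --- crucially --- $\mu_n(0):=-\Delta\varphi_n(0)+P_n\Psi'(\varphi_n(0))-\chi_\varphi\sigma_n(0)$ stays bounded in $H^1(\Omega)$ uniformly in $n$, because $\Delta\varphi_0\in H^1(\Omega)$ and $\varphi_0\in H^3(\Omega)\hookrightarrow W^{1,\infty}(\Omega)$. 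With (M2) the system reads $\varphi_t=\Delta\mu+p(\varphi)(N_\sigma-\mu)$, $\mu=-\Delta\varphi+\Psi'(\varphi)-\chi_\varphi\sigma$, $\sigma_t=\chi_\sigma\Delta\sigma-\chi_\varphi\Delta\varphi-p(\varphi)(N_\sigma-\mu)$, $N_\sigma=\chi_\sigma\sigma+\chi_\varphi(1-\varphi)$, with Neumann conditions; in all the estimates I would exploit the three-dimensional embeddings $H^1\hookrightarrow L^6$, $H^2\hookrightarrow L^\infty$, $H^3\hookrightarrow W^{1,\infty}$ together with (3.4), (3.7), the bound $q\le4$ of (P2) being essential for the superlinear terms. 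From Theorem 3.1 one starts with $\varphi\in L^\infty(0,T;H^1)\cap L^2(0,T;H^3)$, $\sigma\in L^\infty(0,T;L^2)\cap L^2(0,T;H^1)$, $\mu\in L^2(0,T;H^1)$ and $\sqrt{p(\varphi)}(N_\sigma-\mu)\in L^2(0,T;L^2)$, whence also $p(\varphi)(N_\sigma-\mu)\in L^2(0,T;L^{6/5})$ by splitting off $\sqrt{p(\varphi)}\in L^\infty(0,T;L^{12/q})$.

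The key step, and the one I expect to be the main obstacle, is the higher-order energy estimate for the Cahn--Hilliard block. I would test the $\varphi$-equation with $\partial_t\mu_n$ (equivalently: differentiate it and test with $A^{-1}\partial_t\varphi_n$), using $\partial_t\mu_n=-\Delta\partial_t\varphi_n+\Psi''(\varphi_n)\partial_t\varphi_n-\chi_\varphi\partial_t\sigma_n$ and eliminating $\partial_t\sigma_n$ through the $\sigma$-equation so as to avoid needing $\partial_t\sigma_n\in L^2$ prematurely. This yields, schematically, $\tfrac12\tfrac{d}{dt}\|\nabla\mu_n\|^2+\|\nabla\partial_t\varphi_n\|^2+\int\Psi_0''(\varphi_n)(\partial_t\varphi_n)^2$ on the left: the last integral is $\ge c_1\|\partial_t\varphi_n\|^2$ and supplies coercivity, while the non-convex part $\int\lambda''(\varphi_n)(\partial_t\varphi_n)^2$ is absorbed via $|\lambda''|\le\alpha$ and the interpolation $\|\partial_t\varphi_n\|^2\le\delta\|\nabla\partial_t\varphi_n\|^2+C_\delta\|\partial_t\varphi_n\|_{(H^1)^*}^2$, with $\|\partial_t\varphi_n\|_{(H^1)^*}\le C(1+\|\nabla\mu_n\|)$ read off from the equation. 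The delicate terms come from the reaction $p(\varphi_n)(N_{\sigma,n}-\mu_n)$; its time derivative I would treat by integrating by parts in time, the boundary contribution $\langle p(\varphi_n)(N_{\sigma,n}-\mu_n),\mu_n\rangle$ being controlled thanks to the sign $p\ge0$ (so $\int p(\varphi_n)\mu_n^2\ge0$), the growth bound (P2) and the embeddings, and the remaining contributions by Hölder--Young, using the $L^2(0,T;L^{6/5})$-bound on $p(\varphi_n)(N_{\sigma,n}-\mu_n)$, the identity $p(\varphi_n)\mu_n\partial_t\mu_n=\tfrac12\partial_t(p(\varphi_n)\mu_n^2)-\tfrac12 p'(\varphi_n)(\partial_t\varphi_n)\mu_n^2$, and $\|\partial_t\varphi_n\|_{H^1}$ with a small coefficient. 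With $\|\nabla\mu_n(0)\|$ bounded uniformly --- this is precisely the price of assuming $\varphi_0\in H^3$, $\sigma_0\in H^1$ --- Gronwall's lemma delivers $\mu\in L^\infty(0,T;H^1)$ and $\varphi_t\in L^2(0,T;H^1)$. Testing next the elliptic identity $-\Delta\varphi+\Psi'(\varphi)=\mu+\chi_\varphi\sigma$ with $-\Delta\varphi$ and using $\Psi''\ge-\alpha$ gives $\varphi\in L^\infty(0,T;H^2)\hookrightarrow L^\infty((0,T)\times\Omega)$.

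Once $\varphi$ is bounded, the rest follows by bootstrap. Now $p(\varphi),p'(\varphi),\Psi''(\varphi)\in L^\infty((0,T)\times\Omega)$, so $p(\varphi)(N_\sigma-\mu)\in L^\infty(0,T;L^2)$; testing the $\sigma$-equation with $-\Delta\sigma_n$ and Gronwall then give $\sigma\in L^\infty(0,T;H^1)\cap L^2(0,T;H^2)$, and reading $\sigma_t$ off the equation gives $\sigma_t\in L^2(0,T;L^2)$. Also $\Psi'(\varphi)\in L^\infty(0,T;H^1)$, hence $-\Delta\varphi=\mu+\chi_\varphi\sigma-\Psi'(\varphi)\in L^\infty(0,T;H^1)$ and elliptic regularity with $\partial_\nu\varphi=0$ yields $\varphi\in L^\infty(0,T;H^3)$; finally $\Delta\mu=\varphi_t-p(\varphi)(N_\sigma-\mu)\in L^2(0,T;H^1)$ gives $\mu\in L^2(0,T;H^3)$ with $\partial_\nu\mu=0$.

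It then remains to pass to the limit $n\to\infty$: the uniform bounds just obtained and the Aubin--Lions lemma produce, along a subsequence, a limit that satisfies (3.14)--(3.16) and solves (1.1)--(1.5) weakly; by the uniqueness in Theorem 3.2 it coincides with the weak solution of Theorem 3.1, which is therefore a strong solution, and since all bounds are on an arbitrary interval $[0,T]$ the solution exists globally. As indicated, the hard part is the second step: one must balance the non-convexity of $\Psi$, the superlinear reaction term (which forces $q\le4$) and the chemotaxis coupling between $\varphi$ and $\sigma$ so that the Gronwall estimate closes while $\|\nabla\mu_n(0)\|$ stays under control.
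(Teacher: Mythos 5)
Your overall skeleton (Galerkin approximation with eigenbasis of $A$, uniform $H^{1}$-bound on $\mu_{n}(0)$ coming from $\varphi_{0}\in H^{3}$ with $\partial_{\nu}\varphi_{0}=0$ and $\sigma_{0}\in H^{1}$, a higher-order estimate obtained by testing the $\varphi$-equation with $\partial_{t}\mu_{n}$, Gronwall, elliptic bootstrap for $\varphi\in L^{\infty}(0,T;H^{3})$, $\sigma\in L^{\infty}(0,T;H^{1})\cap L^{2}(0,T;H^{2})$, $\mu\in L^{2}(0,T;H^{3})$, and identification with the unique weak solution) is the same as the paper's. The deviation is in the central estimate, and there I see a genuine gap. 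You try to close a Gronwall inequality for the $(\varphi,\mu)$-block alone, postponing all $\sigma$-regularity to a later bootstrap, and you propose to handle $\langle p(\varphi_{n})(N_{\sigma,n}-\mu_{n}),\partial_{t}\mu_{n}\rangle$ by integration by parts in time. The sign $p\geq 0$ only disposes of the piece $-\int_{\Omega}p(\varphi_{n}(t))\mu_{n}(t)^{2}$; the remaining boundary contribution $\int_{\Omega}p(\varphi_{n}(t))N_{\sigma,n}(t)\mu_{n}(t)\,dx$ must be bounded \emph{pointwise in time}, and with the information available at that stage ($\varphi\in L^{\infty}(0,T;H^{1})$, hence $p(\varphi)\in L^{\infty}(0,T;L^{3/2})$ for $q\le 4$, and only $\sigma\in L^{\infty}(0,T;L^{2})\cap L^{2}(0,T;H^{1})$) H\"older gives nothing better than $\|p\|_{L^{3/2}}\|N_{\sigma}\|_{L^{6}}\|\mu\|_{L^{6}}$, i.e.\ it needs $\sigma(t)\in H^{1}$ (or $p(\varphi(t))\in L^{\infty}$) uniformly in $t$ -- precisely what your scheme only produces \emph{after} this step, since your $\sigma$-estimate (testing with $-\Delta\sigma_{n}$) presupposes $\varphi\in L^{\infty}((0,T)\times\Omega)$. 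The same difficulty reappears in the bulk terms created by eliminating $\partial_{t}\sigma_{n}$ through the $\sigma$-equation, where $\Delta\sigma_{n}$ (or, after integration by parts, $\nabla\sigma_{n}$ paired with $\nabla(p(\varphi_{n})\mu_{n})$) has to be multiplied by $p(\varphi_{n})$, again demanding integrability of $p(\varphi_{n})$ that is not available. So the decoupled estimate does not close as described; there is no small parameter or time-integrated quantity to absorb these terms.

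The paper resolves exactly this point by \emph{coupling} the equations at the higher-order level: one tests (1.1) with $\mu_{t}$, (1.3) with $N_{\sigma t}$ and (1.1) with $\varphi_{t}$ and sums. Then the two reaction terms combine, modulo $\tfrac12\int p'(\varphi)\varphi_{t}(N_{\sigma}-\mu)^{2}$, into the exact derivative $\tfrac12\tfrac{d}{dt}\int p(\varphi)(N_{\sigma}-\mu)^{2}$, which together with $\|\nabla\mu\|^{2}$, $\|\nabla N_{\sigma}\|^{2}$ and (via the $\varphi_{t}$-test) $\|\Delta\varphi\|^{2}$ becomes part of the Gronwall functional, while the chemotaxis cross terms $2\chi_{\varphi}\int\sigma_{t}\varphi_{t}$ are absorbed by the dissipation $\chi_{\sigma}\|\sigma_{t}\|^{2}$ and $\|\nabla\varphi_{t}\|^{2}$ plus the $\|\varphi_{t}\|^{2}$ contribution from the $\varphi_{t}$-test (this is (4.42)--(4.47)). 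If you incorporate the $N_{\sigma t}$- and $\varphi_{t}$-tests into your step two, your remaining ingredients (initial-data bound for $\mu_{n}(0)$, treatment of $\Psi''$ via $\Psi_{0}''\ge c_{1}$ and $|\lambda''|\le\alpha$, the elliptic bootstrap and the limit passage) are consistent with the paper and the argument goes through; as written, the key estimate is not closed.
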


\section{Proof of the well-posedness}

In this section we will prove well-posedness results stated in the third
section.

To start with, we state the following lemma which is used in the proof of
Theorem 3.1 (for the proof see \cite[Lemma 2]{frigieri}).

\begin{lemma}
Assume that $\Psi $ satisfies ($\Psi $) with $\rho >2$. Then, there exist a
sequence of $\Psi _{m}\in C^{2}\left( 
\mathbb{R}
\right) $ satisfying%
\begin{equation*}
\left\vert \Psi _{m}\left( s\right) \right\vert \leq \alpha _{m}\left(
1+s^{2}\right) \text{ \ }\forall s\in 
\mathbb{R}
,
\end{equation*}%
for some constant $\alpha _{m}\geq 0$ such that $\Psi _{m}\left( s\right)
\rightarrow \Psi \left( s\right) $ pointwise for all $s\in 
\mathbb{R}
$ as $m\rightarrow \infty $ and fulfilling, for every $m\in 
\mathbb{N}
$, the bounds%
\begin{equation*}
\left\vert \Psi _{m}\left( s\right) \right\vert \leq k_{0}\left\vert \Psi
\left( s\right) \right\vert \text{, \ }\left\vert \Psi _{m}^{\prime }\left(
s\right) \right\vert \leq k_{1}\left\vert \Psi ^{\prime }\left( s\right)
\right\vert \text{, \ }\left\vert \Psi _{m}^{\prime \prime }\left( s\right)
\right\vert \leq k_{2}\left\vert \Psi ^{\prime \prime }\left( s\right)
\right\vert \text{ \ }\forall s\in 
\mathbb{R}
,
\end{equation*}%
and the equi-coercivity conditions%
\begin{equation*}
\Psi _{m}\left( s\right) \geq R_{1}s^{2}-k_{4}\text{, }\Psi _{m}^{\prime
\prime }\left( s\right) \geq -k_{5}\text{ \ }\forall s\in 
\mathbb{R}
,
\end{equation*}%
where $k_{i}$, $i=0,...,5$ are positive constants depending on $\Psi $ and $%
\rho $ only.
\end{lemma}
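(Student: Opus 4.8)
The plan is to construct $\Psi_m$ by truncating \emph{only} the convex superquadratic part $\Psi_0$ at level $m$, leaving $\lambda$ untouched, and then to derive the $m$-uniform bounds by comparing the quadratic tail of $\Psi_m$ with $\Psi$ on a region where everything in sight is positive and monotone. Fix a threshold $m_*\ge 1$ (specified below). For $m\ge m_*$, let $\Psi_{0,m}''$ agree with $\Psi_0''$ on $[-m,m]$ and equal the constant $\Psi_0''(m)$ for $s>m$ and $\Psi_0''(-m)$ for $s<-m$; this is continuous and, by (3.4), satisfies $\Psi_{0,m}''\ge c_1>0$ on $\mathbb{R}$. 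Integrate it twice with the normalization $\Psi_{0,m}(0)=\Psi_0(0)$, $\Psi_{0,m}'(0)=\Psi_0'(0)$, so that $\Psi_{0,m}\equiv\Psi_0$ on $[-m,m]$ and $\Psi_{0,m}$ is a parabola outside; set $\Psi_m:=\Psi_{0,m}+\lambda\in C^2(\mathbb{R})$. Then $\Psi_m\equiv\Psi$ on $[-m,m]$, $\Psi_m''$ is bounded (hence $|\Psi_m(s)|\le\alpha_m(1+s^2)$ with $\alpha_m$ depending on $m$), and $\Psi_m(s)=\Psi(s)$ as soon as $m>|s|$, so the pointwise convergence $\Psi_m\to\Psi$ is immediate and all claimed bounds hold with constant $1$ on $[-m,m]$. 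Finally relabel the family by $\mathbb{N}$.

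To make the constants uniform in $m$ we localize. Since $\rho>2$, (3.4) gives $\Psi_0''(s)\ge c_1|s|^{\rho-2}\to\infty$ and hence $|\Psi'(s)|/|s|\to\infty$, so using also (3.5) we may fix $m_*$ so large that
\[\Psi_0''(s)\ge\max\{2\alpha,\,2R_1+\alpha\},\qquad \Psi(s)>0,\qquad |\Psi'(s)|\ge 2R_1|s|\qquad\text{for }|s|\ge m_*;\]
in particular $\Psi'>0$ and $\Psi$ is increasing and positive on $[m_*,\infty)$, with mirror statements on $(-\infty,-m_*]$. The only nontrivial observation is that, $x\mapsto 1+x^{\rho-2}$ being increasing, (3.4) gives $\Psi_0''(m)\le c_2(1+m^{\rho-2})\le\frac{c_2}{c_1}\Psi_0''(t)$ for every $t\ge m$; combined with $|\lambda''|\le\alpha$ and the fact that $\Psi''$ is comparable to $\Psi_0''$ and bounded below by a positive constant on $\{|s|\ge m_*\}$, this yields $0<\Psi_m''(t)\le k_2\Psi''(t)$ on the tails, with $k_2=k_2(c_1,c_2,\rho,\alpha)$. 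Since $\Psi_m,\Psi_m',\Psi_m''$ match $\Psi,\Psi',\Psi''$ at $s=\pm m$, integrating this inequality outward from the matching point and using the positivity/monotonicity just recorded to control boundary terms propagates it to $0<\Psi_m'\le k_1\Psi'$ and then $0<\Psi_m\le k_0\Psi$ on the tails, with $k_1=1+k_2$, $k_0=1+k_1$. Together with the equalities on $[-m,m]$ this gives $|\Psi_m|\le k_0|\Psi|$, $|\Psi_m'|\le k_1|\Psi'|$, $|\Psi_m''|\le k_2|\Psi''|$ on $\mathbb{R}$.

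For the equi-coercivity bounds, $\Psi_m''(s)=\Psi_{0,m}''(s)+\lambda''(s)\ge c_1-\alpha\ge -k_5$ on $\mathbb{R}$ with $k_5:=\alpha+1$. For $s>m$ ($\ge m_*$) write $\Psi_m(s)=\Psi(m)+\Psi'(m)(s-m)+\int_m^s\!\int_m^t\Psi_m''(r)\,dr\,dt$. By the choice of $m_*$, $\Psi_m''\ge 2R_1$ on the tail, $\Psi(m)\ge R_1m^2-R_2$ by (3.5), and $\Psi'(m)=\Psi'(m_*)+\int_{m_*}^m\Psi''\ge 2R_1m_*+2R_1(m-m_*)=2R_1m$. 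Substituting and completing the square,
\[\Psi_m(s)\ge R_1m^2-R_2+2R_1m(s-m)+R_1(s-m)^2=R_1s^2-R_2,\]
and likewise for $s<-m$, while on $[-m,m]$ we have $\Psi_m=\Psi\ge R_1s^2-R_2$; hence $\Psi_m(s)\ge R_1s^2-k_4$ with $k_4:=|R_2|+1$. All of $k_0,\dots,k_5,k_4$ depend only on $\Psi$ (through $c_1,c_2,\alpha,R_1,R_2$) and $\rho$, as required.

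The main obstacle is exactly this uniform-in-$m$ bookkeeping, and it forces two choices. One must truncate $\Psi_0$ and not $\Psi$: a direct truncation of $\Psi$ produces a quadratic tail that just past $s=m$ is not dominated by $\Psi$ up to a fixed factor (there $\Psi\sim|s|^{\rho}$), which would break $|\Psi_m|\le k_0|\Psi|$. And one must localize to $|s|\ge m_*$: the ``integrate outward from the matching point'' scheme underlying every bound relies on $\Psi,\Psi',\Psi'',\Psi_m,\Psi_m',\Psi_m''$ being positive and on $\Psi,\Psi_m$ being monotone on the tails, after which the finitely many small indices are discarded by relabeling. The remaining ingredients — the explicit parabola formulas, continuity of $\Psi_{0,m}''$ at $\pm m$, and the existence of $m_*$ — are routine.
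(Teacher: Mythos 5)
Your construction is correct: freezing $\Psi_0''$ at $\pm m$ (for $m$ beyond a threshold $m_*$, then relabeling), integrating twice with matched data at $0$, adding back $\lambda$, and propagating the second-derivative comparison outward from the matching point using (3.4)--(3.5) is exactly the standard quadratic-extension argument and yields all the stated uniform constants. This is essentially the same approach as in the source the paper cites for this lemma, \cite[Lemma 2]{frigieri} (the paper gives no independent proof), which likewise modifies only the convex part $\Psi_0$ outside $[-m,m]$.
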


\subsection{Existence of weak Solutions}

\begin{proof}[Proof of Theorem 3.1]
The proof follows the line of the proof of Theorem 1 in \cite{frigieri} and we skip some details and highlight the changes arising due to the fact that we include chemotaxis. First of all we will prove the theorem for $\rho\leq 4$, and then we extend the
	result to $\rho \in \left( 4,6\right) $ with the help of Lemma 4.1.\\
	\textbf{Step 1: (case }$\rho \leq 4$\textbf{)}\\
	
	\textbf{Approximating problem:} We prove the theorem by Galerkin approximation. Let $\left\{ w_{j}\right\}
_{j\geq 1}$ be the eigenfunctions of $A$. We take this sequence as a
Galerkin basis in $H^{1}\left( \Omega \right) $. Define $P_{n}$ as the
orthogonal projector in $L^{2}\left( \Omega \right) $ onto the $n-$%
dimensional subspace $W_{n}:=\left\langle w_{1},...,w_{n}\right\rangle $
spanned by the first $n$ eigenfunctions. We choose $ w_{1} $ to be constant. We consider the following Galerkin
ansatz for (1.1)-(1.5)%
\begin{equation*}
\varphi _{n}\left( t\right) =\sum\limits_{k=1}^{n}a_{k}^{n}\left( t\right)
w_{k}\text{, }\sigma _{n}\left( t\right)
=\sum\limits_{k=1}^{n}b_{k}^{n}\left( t\right) w_{k}\text{, }\mu _{n}\left(
t\right) =\sum\limits_{k=1}^{n}c_{k}^{n}\left( t\right) w_{k}
\end{equation*}%
which solves the following Cauchy problem%
\begin{align}
\left\langle \varphi _{n}^{\prime },w_{j}\right\rangle +\left\langle m\left(
\varphi _{n}\right) \nabla \mu _{n},\nabla w_{j}\right\rangle =\left\langle
p\left( \varphi _{n}\right) \left( \chi _{\sigma }\sigma _{n}+\chi _{\varphi
}\left( 1-\varphi _{n}\right) -\mu _{n}\right) ,w_{j}\right\rangle,\tag{4.1}\\
\left\langle \mu _{n},w_{j}\right\rangle =\left\langle \nabla \varphi
_{n},\nabla w_{j}\right\rangle +\left\langle \Psi ^{\prime }\left( \varphi
_{n}\right) ,w_{j}\right\rangle -\left\langle \chi _{\varphi }\sigma
_{n},w_{j}\right\rangle,\tag{4.2}\\
\left\langle \sigma _{n}^{\prime },w_{j}\right\rangle +\left\langle n\left(
\varphi _{n}\right) \left( \chi _{\sigma }\nabla \sigma _{n}-\chi _{\varphi
}\nabla \varphi _{n}\right) ,\nabla w_{j}\right\rangle =-\left\langle
p(\varphi _{n}) (\chi _{\sigma }\sigma _{n}+\chi _{\varphi}(1-\varphi _{n}) -\mu _{n}),w_{j}\right\rangle,\tag{4.3}\\
\varphi _{n}\left( 0\right) =\varphi _{0n}\text{, }\sigma _{n}\left(
0\right) =\sigma _{0n},\tag{4.4}
\end{align}%
for $j=1,...,n,$ where $\varphi _{0n}=P_{n}\varphi _{0}$ and $\sigma
_{0n}=P_{n}\sigma _{0}$.
\\
This problem (4.1)--(4.4) is an initial value problem for a system of
ordinary differential equations in the unknowns $\left\{
a_{1}^{n},...,a_{n}^{n}\right\} $ and $\left\{
b_{1}^{n},...,b_{n}^{n}\right\} $. Since $\Psi ^{\prime }\in C^{1}$ and $%
p\in C_{loc}^{0,1}$, from the Cauchy-Lipschitz theorem it follows that there
exists $T_{n}^{\ast }\in \left( 0,\infty \right] $ such that this system has
a unique maximal solution $a^{n}:=\left( a_{1}^{n},...,a_{n}^{n}\right) $, $%
b^{n}:=\left( b_{1}^{n},...,b_{n}^{n}\right) $ on $\left[ 0,T_{n}^{\ast
}\right) $ with $a^{n}$, $b^{n}\in C^{1}\left( \left[ 0,T_{n}^{\ast }\right)
;%
\mathbb{R}
^{n}\right) $. Thus, the approximate problem (4.1)--(4.4) has a unique
solution $\varphi _{n}$, $\sigma _{n}$, $\mu _{n}\in $ $C^{1}\left( \left[
0,T_{n}^{\ast }\right) ;W_{n}\right) .$\\

\textbf{A priori estimates for the sequence of }$\varphi _{n}$\textbf{\ and }$%
\sigma _{n}$\textbf{:}
We prove the a priori estimates for the approximating solutions. As a result
of these estimates, we deduce that $T_{n}^{\ast }=\infty $ for every $n\in 
\mathbb{N}
$. Now, multiplying (4.1) by $c_{j}^{n}$, (4.2) by $\left( a_{j}^{n}\right)^{\prime} $
and (4.3) by $\chi _{\sigma }b_{j}^{n}+\chi _{\varphi }\left(
1-a_{j}^{n}\right) $ and summing the obtained equalities over $j=1,...,n$,
we have%
\begin{align}
&\frac{d}{dt}\left( \frac{1}{2}\left\Vert \nabla \varphi _{n}\right\Vert
_{L^{2}\left( \Omega \right) }^{2}+\int\limits_{\Omega } \Psi \left( \varphi _{n}\right)dx +%
\frac{\chi _{\sigma }}{2}\left\Vert \sigma _{n}\right\Vert _{L^{2}\left(
	\Omega \right) }^{2}+\int\limits_{\Omega } \chi _{\varphi }\sigma _{n}\left( 1-\varphi
_{n}\right) dx\right) \nonumber\\
&+\int\limits_{\Omega }\left( m\left( \varphi \right)
\left\vert \nabla \mu \right\vert ^{2}+n\left( \varphi \right) \left\vert
\nabla N_{\sigma }\right\vert ^{2}+p\left( \varphi \right) \left( N_{\sigma
}-\mu \right) ^{2}\right) dx =0.  \tag{4.5}
\end{align}
Integrating (4.5) on $\left( 0,t\right) $, we obtain%
\begin{align}
&E\left( \varphi _{n}\left( t\right) ,\sigma _{n}\left( t\right) \right)
+\int\limits_{0}^{t}\int\limits_{\Omega }\left( m\left( \varphi \right)
\left\vert \nabla \mu \right\vert ^{2}+n\left( \varphi \right) \left\vert
\nabla N_{\sigma }\right\vert ^{2}+p\left( \varphi \right) \left( N_{\sigma
}-\mu \right) ^{2}\right) dxd\tau\nonumber\\
 &=E\left( \varphi _{0n},\sigma _{0n}\right)
.  \tag{4.6}
\end{align}
On the other hand, by (3.5), it follows that%
\begin{align*}
\left\vert \int\limits_{\Omega }\chi _{\varphi }\sigma _{n}\left( 1-\varphi
_{n}\right) \right\vert &\leq \left\vert \Omega \right\vert ^{1/2}\chi
_{\varphi }\left\Vert \sigma _{n}\right\Vert _{L^{2}\left( \Omega \right)
}+\chi _{\varphi }\left\Vert \sigma _{n}\right\Vert _{L^{2}\left( \Omega
\right) }\left\Vert \varphi _{n}\right\Vert _{L^{2}\left( \Omega \right) }\\
&\leq \frac{2\chi _{\varphi }^{2}}{\chi _{\sigma }}\left\vert \Omega
\right\vert +\frac{\chi _{\sigma }}{8}\left\Vert \sigma _{n}\right\Vert ^{2}+%
\frac{\chi _{\sigma }}{8}\left\Vert \sigma _{n}\right\Vert ^{2}+\frac{2\chi
	_{\varphi }^{2}}{\chi _{\sigma }}\left\Vert \varphi _{n}\right\Vert ^{2}\\
&\leq \frac{2\chi _{\varphi }^{2}}{\chi _{\sigma }}\left\vert \Omega
\right\vert +\frac{\chi _{\sigma }}{4}\left\Vert \sigma _{n}\right\Vert ^{2}+%
\frac{R_{2}}{R_{1}}\frac{2\chi _{\varphi }^{2}}{\chi _{\sigma }}+\frac{1}{%
	R_{1}}\frac{2\chi _{\varphi }^{2}}{\chi _{\sigma }}\int\limits_{\Omega } \Psi \left( \varphi
_{n}\right).
\end{align*}%
Then,since $R_{1}>\frac{2\chi _{\varphi }^{2}}{\chi _{\sigma }}$,
considering the previous estimate in (4.6), we deduce%
\begin{align}
&\left\Vert \nabla \varphi _{n}\left( t\right) \right\Vert
^{2}+\int\limits_{\Omega }\Psi \left( \varphi _{n}\left( t\right) \right)dx
+\left\Vert \sigma _{n}\left( t\right) \right\Vert
^{2}\nonumber\\
&+\int\limits_{0}^{t}\left( \left\Vert \nabla \mu _{n}\right\Vert
^{2}+\left\Vert \nabla N_{\sigma _{n}}\right\Vert ^{2}\right) d\tau
+\int\limits_{0}^{t}\int\limits_{\Omega }p\left( \varphi _{n}\right)
\left( N_{\sigma _{n}}-\mu _{n}\right) ^{2}dxd\tau\leq C . \tag{4.7}
\end{align}
Together with the assumptions of the theorem, this yields%
\begin{align}
\left\Vert \varphi _{n}\right\Vert _{L^{\infty }\left( 0,T;H^{1}\left(\Omega \right) \right) }&\leq C\text{, }&\left\Vert \sigma _{n}\right\Vert_{L^{\infty }\left( 0,T;L^{2}\left( \Omega \right) \right) \cap L^{2}\left(0,T;H^{1}\left( \Omega \right) \right) }\leq C, \tag{4.8}\\
\left\Vert \nabla \mu _{n}\right\Vert _{L^{2}\left( 0,T;L^{2}\left( \Omega\right) \right) }&\leq C\text{, }&\left\Vert N_{\sigma _{n}}\right\Vert_{L^{2}\left( 0,T;H^{1}\left( \Omega \right) \right) }\leq C,\tag{4.9}\\
\left\Vert \Psi \left( \varphi _{n}\right) \right\Vert _{L^{\infty }\left(	0,T;L^{1}\left( \Omega \right) \right) }&\leq C, &\left\Vert \sqrt{p\left( \varphi _{n}\right) }\left( N_{\sigma _{n}}-\mu _{n}\right)\right\Vert _{L^{2}\left( 0,T;L^{2}\left( \Omega \right) \right) }\leq C,\tag{4.10}
\end{align}%
where $C=C\left( \left\Vert \varphi _{0}\right\Vert ,\left\Vert \sigma
_{0}\right\Vert \right) $ is a nonnegative constant depending on the norms
of the initial data (and on $\Psi $ and $\Omega )$. \newline
On the other hand, recalling the assumptions on $\Psi $, from (4.2) it follows taking $ j=1 $ that%
\begin{equation*}
\left| \int\limits_{\Omega }\mu _{n}\right| =\left| \int\limits_{\Omega }\Psi ^{\prime }\left(
\varphi _{n}\right) -\chi _{\varphi }\int\limits_{\Omega }\sigma _{n}\right| \leq
c_{1}+c_{2}\int\limits_{\Omega }\Psi \left( \varphi _{n}\right)
+c_{3}\left\Vert \sigma _{n}\right\Vert _{L^{2}\left( \Omega \right) },
\end{equation*}%
where $c_{1}$, $c_{2}$ are two nonnegative constants depending only on $\Psi 
$, $\Omega $. Taking into account (4.8)$_{1}$ and (4.10) in the above
equality, we infer 
\begin{equation}
\left\vert \int\limits_{\Omega }\mu _{n}\right\vert \leq C.  \tag{4.11}
\end{equation}%
Hence, exploiting Poincare-Wirtinger inequality from (4.9)$_{1}$ and (4.11),
we obtain%
\begin{equation}
\left\Vert \mu _{n}\right\Vert _{L^{2}\left( 0,T;H^{1}\left( \Omega \right)
\right) }\leq C.  \tag{4.12}
\end{equation}%
Now, we will establish that $\varphi _{n}$ are
also uniformly bounded in $L^{2}\left( 0,T;H^{3}\left( \Omega \right)
\right) $. By using the projection onto $W_{n}$, (4.2) can be written as%
\begin{equation}
\mu _{n}=-\Delta \varphi _{n}+P_{n}\Psi ^{\prime }\left( \varphi _{n}\right)
-\chi _{\varphi }\sigma _{n}.  \tag{4.13}
\end{equation}%
Since $H^{1}\left( \Omega \right) \hookrightarrow L^{6}\left( \Omega \right) 
$, (4.8)$_{1}$ yields that the sequence of $\varphi _{n}$ is bounded in $%
L^{\infty }\left( 0,T;L^{6}\left( \Omega \right) \right) $ which gives us%
\begin{equation}
\left\Vert \Psi ^{\prime }\left( \varphi _{n}\right) \right\Vert _{L^{\infty
}\left( 0,T;L^{2}\left( \Omega \right) \right) }\leq C.  \tag{4.14}
\end{equation}%
Hence, recalling that $\left\Vert P_{n}\Psi ^{\prime }\left( \varphi
_{n}\right) \right\Vert _{L^{2}\left( \Omega \right) }\leq \left\Vert \Psi
^{\prime }\left( \varphi _{n}\right) \right\Vert _{L^{2}\left( \Omega
\right) }$ and considering (4.8)$_{2}$, (4.12) and (4.14) in (4.13), we get%
\begin{equation*}
\left\Vert \Delta  \varphi _{n} \right\Vert _{L^{2}\left(
0,T;L^{2}\left( \Omega \right) \right) }\leq C,
\end{equation*}%
which together with (4.8)$_{1}$ and elliptic regularity theory yields%
\begin{equation}
\left\Vert \varphi _{n}\right\Vert _{L^{\infty }\left( 0,T;H^{1}\left(
\Omega \right) \right) \cap L^{2}\left( 0,T;H^{2}\left( \Omega \right)
\right) }\leq C.  \tag{4.15}
\end{equation}%
Using the Gagliardo-Nirenberg inequality, from (4.15) it follows that 
\begin{equation*}
\left\{ 
\begin{array}{l}
L^{\infty }\left( 0,T;H^{1}\left( \Omega \right) \right) \cap L^{2}\left(
0,T;H^{2}\left( \Omega \right) \right) \hookrightarrow L^{10}\left( Q\right)
, \\ 
L^{\infty }\left( 0,T;L^{2}\left( \Omega \right) \right) \cap L^{2}\left(
0,T;H^{1}\left( \Omega \right) \right) \hookrightarrow L^{10/3}\left(
Q\right) ,%
\end{array}%
\right.
\end{equation*}%
which yields%
\begin{equation}
\left\{ 
\begin{array}{r}
\left\Vert \varphi _{n}\right\Vert _{L^{10}\left( Q\right) }\leq C, \\ 
\left\Vert \nabla \varphi _{n}\right\Vert _{L^{10/3}\left( Q\right) }\leq C,%
\end{array}%
\right.  \tag{4.16}
\end{equation}%
where $Q=\left( 0,T\right)\times\Omega$. On the other hand, since $%
\left( left\Vert \nabla u\right\Vert _{L^{2}\left( \Omega \right) }\leq \left\Vert
A^{1/2}u\right\Vert _{L^{2}\left( \Omega \right) }$ for all $u\in
H^{1}\left( \Omega \right) $, we have%
\begin{eqnarray*}
\left\Vert \nabla \left( P_{n}\Psi ^{\prime }\left( \varphi _{n}\right)
\right) \right\Vert _{L^{2}\left( \Omega \right) } &\leq &\left\Vert
A^{1/2}P_{n}\Psi ^{\prime }\left( \varphi _{n}\right) \right\Vert
_{L^{2}\left( \Omega \right) } \\
&=&\left\Vert P_{n}A^{1/2}\Psi ^{\prime }\left( \varphi _{n}\right)
\right\Vert _{L^{2}\left( \Omega \right) } \\
&\leq &\left\Vert \nabla \Psi ^{\prime }\left( \varphi _{n}\right)
\right\Vert _{L^{2}\left( \Omega \right) }+\left\Vert \Psi ^{\prime }\left(
\varphi _{n}\right) \right\Vert _{L^{2}\left( \Omega \right) }.
\end{eqnarray*}%
With the help of above estimate,from (3.2),(4.14) and (4.16), we infer that%
\begin{eqnarray*}
\left\Vert P_{n}\Psi ^{\prime }\left( \varphi _{n}\right) \right\Vert
_{L^{2}\left( 0,T;H^{1}\left( \Omega \right) \right) } &\leq &\left\Vert
\Psi ^{\prime \prime }\left( \varphi _{n}\right) \nabla \varphi
_{n}\right\Vert _{L^{2}\left( Q\right) }+2\left\Vert \Psi ^{\prime }\left(
\varphi _{n}\right) \right\Vert _{L^{2}\left( Q\right) } \\
&\leq &\left\Vert \Psi ^{\prime \prime }\left( \varphi _{n}\right)
\right\Vert _{L^{5}\left( Q\right) }\left\Vert \nabla \varphi
_{n}\right\Vert _{L^{10/3}\left( Q\right) }+2\left\Vert \Psi ^{\prime
}\left( \varphi _{n}\right) \right\Vert _{L^{2}\left( Q\right) } \\
&\leq &c_{3}\left( 1+\left\Vert \varphi _{n}\right\Vert _{L^{10}\left(
Q\right) }\right) \left\Vert \nabla \varphi _{n}\right\Vert _{L^{10/3}\left(
Q\right) }+2\left\Vert \Psi ^{\prime }\left( \varphi _{n}\right) \right\Vert
_{L^{2}\left( Q\right) } \\
&\leq &C.
\end{eqnarray*}%
Considering above estimate with (4.8)$_{2}$ and (4.12) in (4.13), we obtain%
\begin{equation}
\left\Vert \varphi _{n}\right\Vert _{L^{2}\left( 0,T;H^{3}\left( \Omega
\right) \right) }\leq C.  \tag{4.17}
\end{equation}%

\textbf{A priori estimates for the sequence of time derivatives }$\varphi
_{n}^{\prime }$\textbf{\ and }$\sigma _{n}^{\prime }$\textbf{:} Let $\chi \in D\left( A\right) \hookrightarrow L^{\infty }\left( \Omega
\right) $ then it can be decomposed as $\chi =\chi _{1}+\chi _{2}$, where $%
\chi _{1}=P_{n}\chi \in W_{n}$ and $\chi _{2}\in \left( I-P_{n}\right) \chi
\in W_{n}^{\bot }$ \ (recall that $\chi _{1}$ and $\chi _{2}$ are orthogonal
in $L^{2}\left( \Omega \right) $, $H^{1}\left( \Omega \right) $ and $D\left(
A\right) $). Then, since $\chi _{1}$ and $\chi _{2}$ are orthogonal in $%
L^{2}\left( \Omega \right) $ we obtain from (4.1) and (4.3) that%
\begin{equation}
\left\{ 
\begin{array}{c}
\left\langle \left\langle \varphi _{n}^{\prime },\chi \right\rangle
\right\rangle =\left\langle \left\langle \varphi _{n}^{\prime },\chi
_{1}\right\rangle \right\rangle =-\left\langle m\left( \varphi _{n}\right)
\nabla \mu _{n},\nabla \chi _{1}\right\rangle +\left\langle p\left( \varphi
_{n}\right) \left( N_{\sigma _{n}}-\mu _{n}\right) ,\chi _{1}\right\rangle ,
\\ 
\left\langle \left\langle \sigma _{n}^{\prime },\chi \right\rangle
\right\rangle =\left\langle \left\langle \sigma _{n}^{\prime },\chi
_{1}\right\rangle \right\rangle =-\left\langle n\left( \varphi _{n}\right)
\nabla N_{\sigma _{n}},\nabla \chi _{1}\right\rangle -\left\langle p\left(
\varphi _{n}\right) \left( N_{\sigma _{n}}-\mu _{n}\right) ,\chi
_{1}\right\rangle. 
\end{array}%
\right.   \tag{4.18}
\end{equation}%
For the last terms in the above identities, recalling $D\left( A\right)
\hookrightarrow L^{\infty }\left( \Omega \right) $, we deduce that%
\begin{equation}
\left\vert \left\langle p\left( \varphi _{n}\right) \left( N_{\sigma
_{n}}-\mu _{n}\right) ,\chi _{1}\right\rangle \right\vert \leq \left\Vert
p\left( \varphi _{n}\right) \right\Vert _{L^{6/5}\left( \Omega \right)
}\left\Vert N_{\sigma _{n}}-\mu _{n}\right\Vert _{L^{6}\left( \Omega \right)
}\left\Vert \chi _{1}\right\Vert _{D\left( A\right) }.  \tag{4.19}
\end{equation}%
From (4.9)$_{2}$ and (4.12), the sequence of $\left( N_{\sigma _{n}}-\mu
_{n}\right) $ is bounded in $L^{2}\left( 0,T;L^{6}\left( \Omega \right)
\right) $. Thus, to obtain the boundedness of the sequences of $\varphi
_{n}^{\prime }$ and $\sigma _{n}^{\prime }$ in $L^{r}\left( 0,T;D\left(
A^{-1}\right) \right) $ with some $r>1$, boundedness of $p\left( \varphi
_{n}\right) $ in $L^{\theta }\left( 0,T;L^{6/5}\left( \Omega \right) \right) 
$ is required for some $\theta >2$. As in \cite{frigieri}, we can use embedding theorems and Hölder as well as Gagliardo-Nirenberg inequalities to show that this is possible. Hence, another application of Hölder's inequality gives that (4.19) implies
\begin{equation}
\int\limits_{0}^{T}\left\vert \left\langle p\left( \varphi _{n}\right)
\left( N_{\sigma _{n}}-\mu _{n}\right) ,\chi _{1}\right\rangle \right\vert
dt\leq c_{5}\left\Vert \chi _{1}\right\Vert _{L^{\frac{2\theta }{\theta -2}%
}\left( 0,T;D\left( A\right) \right) }\text{ for some }\theta >2. 
\tag{4.20}
\end{equation}%
Moreover, one can easily deduce%
\begin{equation}
\left\{ 
\begin{array}{r}
\int\limits_{0}^{T}\left\vert \left\langle m\left( \varphi _{n}\right)
\nabla \mu _{n},\nabla \chi _{1}\right\rangle \right\vert dt\leq
c_{6}\left\Vert \chi _{1}\right\Vert _{L^{2}\left( 0,T;D\left( A\right)
\right) ,} \\ 
\int\limits_{0}^{T}\left\vert \left\langle n\left( \varphi _{n}\right)
\nabla N_{\sigma _{n}},\nabla \chi _{1}\right\rangle \right\vert dt\leq
c_{6}\left\Vert \chi _{1}\right\Vert _{L^{2}\left( 0,T;D\left( A\right)
\right) .}%
\end{array}%
\right.   \tag{4.21}
\end{equation}%
Taking into account (4.20) and (4.21) in (4.18), we infer the following
controls%
\begin{equation}
\left\Vert \varphi _{n}^{\prime }\right\Vert _{L^{r}\left( 0,T;D\left(
A^{-1}\right) \right) }\leq C\text{, \ \ }\left\Vert \sigma _{n}^{\prime
}\right\Vert _{L^{r}\left( 0,T;D\left( A^{-1}\right) \right) }\leq C\text{
for }r=\frac{2\theta }{\theta +2},  \tag{4.22}
\end{equation}
where $r>1$ (notice that $\theta >2$).\\

\textbf{Passing to limit:} From the uniform bounds (4.8), (4.12), (4.17) and (4.22) we obtain that the
sequences of $\varphi _{n}$, $\sigma _{n}$, $\mu _{n}$, $\varphi
_{n}^{\prime }$ and $\sigma _{n}^{\prime }$ admit weakly convergent
subsequences with weak limits by $\varphi $, $\sigma $, $\mu $, $\varphi
^{\prime }$ and $\sigma ^{\prime }$, respectively, such that%
\begin{equation*}
\left\{ 
\begin{array}{l}
\varphi \in L^{\infty }\left( 0,T;H^{1}\left( \Omega \right) \right) \cap
L^{2}\left( 0,T;H^{3}\left( \Omega \right) \right) , \\ 
\sigma \in L^{\infty }\left( 0,T;L^{2}\left( \Omega \right) \right) \cap
L^{2}\left( 0,T;H^{1}\left( \Omega \right) \right) , \\ 
\mu \in L^{2}\left( 0,T;H^{1}\left( \Omega \right) \right) , \\ 
\varphi _{t},\sigma _{t}\in L^{r}\left( 0,T;D\left( A^{-1}\right) \right) .%
\end{array}%
\right.
\end{equation*}%
From Aubin-Lions lemma, we have the compact embedding%
\begin{equation*}
L^{\infty }\left( 0,T;H^{1}\left( \Omega \right) \right) \cap W^{1,r}\left(
0,T;D\left( A^{-1}\right) \right) \hookrightarrow C\left( \left[ 0,T\right]
;L^{\kappa }\left( \Omega \right) \right) \text{, \ }2\leq \kappa <6
\end{equation*}
which yields, up to a subsequence, 
\begin{equation*}
\varphi _{n}\rightarrow \varphi \text{ \ \ a.e. in \ }Q\text{ }.
\end{equation*}%
Then, we have%
\begin{equation}
p\left( \varphi _{n}\right) \rightarrow p\left( \varphi \right) \text{ \ \
a.e. in \ }Q\text{ }.  \tag{4.23}
\end{equation}
On the other hand, as $ p(\varphi_{n}) $ is for suitable $ \theta>2, \ \epsilon>0 $ uniformly bounded in $ L^{\theta }\left( 0,T;L^{6/5+\varepsilon }\left( \Omega \right)
\right) $ (cf. \cite{frigieri}), we obtain that%
\begin{equation*}
p\left( \varphi _{n}\right) \rightarrow p\left( \varphi \right) \text{
weakly in }L^{\theta }\left( 0,T;L^{6/5+\varepsilon }\left( \Omega \right)
\right) .
\end{equation*}%
This together with (4.23) gives that up to a subsequence%
\begin{equation}
p\left( \varphi _{n}\right) \rightarrow p\left( \varphi \right) \text{ 
strongly in }L^{2}\left( 0,T;L^{6/5}\left( \Omega \right) \right) \text{.} 
\tag{4.24}
\end{equation}%
Then, since%
\begin{equation*}
\left( N_{\sigma _{n}}-\mu _{n}\right) \rightarrow \left( N_{\sigma }-\mu
\right) \text{ weakly in }L^{2}\left( 0,T;L^{6}\left( \Omega \right) \right) 
\text{,}
\end{equation*}
we can now pass to the limit in the term $\left( p\left( \varphi _{n}\right)
\left( N_{\sigma _{n}}-\mu _{n}\right) ,w_{j}\right) $ with the help of
(4.24). Consequently, thanks to all of the convergences established above we
can pass to the limit in the approximate problem (4.1)-(4.4) and deduce that 
$\varphi $, $\sigma $, $\mu $ satisfy (3.12)-(3.13) together with (3.10).\\

\textbf{Energy inequality:} The energy inequality can be obtained by passing to limit in (4.6). The right hand side is bounded due to the fact that $ \left\| \varphi_{0n} \right\|_{L^{6}(\Omega)}\leq C \left\| \varphi_{0n} \right\|_{H^{1}(\Omega)}\leq C \left\| \varphi_{0} \right\|_{H^{1}(\Omega)} $ which follows from the fact that the eigenfunctions $ {\left\lbrace w_{j}\right\rbrace }_{j\geq1} $ are orthogonal in $L^{2}(\Omega) $ and $ H^{1}(\Omega) $. In addition, we need to show that%
\begin{equation*}
\int\limits_{0}^{t}\int\limits_{\Omega }p\left( \varphi \right) \left(
N_{\sigma }-\mu \right) ^{2}\leq \underset{n\rightarrow \infty }{\lim \inf }%
\int\limits_{0}^{t}\int\limits_{\Omega }p\left( \varphi _{n}\right) \left(
N_{\sigma _{n}}-\mu _{n}\right) ^{2}.
\end{equation*}%
However, here we can argue as in \cite{frigieri}. \\

\textbf{Further regularity with respect to time in the case }$q\leq 4:$ If $q\leq 4$, since $\varphi \in L^{\infty }\left( 0,\infty ;L^{6}\left(
\Omega \right) \right) $, from assumption (P) it follows that $\sqrt{p\left(
\varphi \right) }\in L^{\infty }\left( 0,\infty ;L^{3}\left( \Omega \right)
\right) $. Then, we can estimate the term $p\left( \varphi \right) \left(
N_{\sigma }-\mu \right) $ in $H^{-1}\left( \Omega \right) $ as follows%
\begin{equation}
\left\Vert p\left( \varphi \right) \left( N_{\sigma }-\mu \right)
\right\Vert _{H^{-1}\left( \Omega \right) }\leq c\left\Vert \sqrt{p\left(
\varphi \right) }\right\Vert _{L^{3}\left( \Omega \right) }\left\Vert \sqrt{%
p\left( \varphi \right) }\left( N_{\sigma }-\mu \right) \right\Vert
_{L^{2}\left( \Omega \right) }.  \tag{4.25}
\end{equation}%
Moreover, we can easily obtain that%
\begin{equation}
\left\Vert \Delta \mu \right\Vert _{H^{-1}\left( \Omega \right) }\leq
\left\Vert \nabla \mu \right\Vert _{L^{2}\left( \Omega \right) }\text{ and }%
\left\Vert \Delta N_{\sigma }\right\Vert _{H^{-1}\left( \Omega \right) }\leq
\left\Vert \nabla N_{\sigma }\right\Vert _{L^{2}\left( \Omega \right) } .
\tag{4.26}
\end{equation}%
Therefore, taking the energy inequality (3.19) in (4.25) and (4.26) into account we have%
\begin{equation*}
\left\{ 
\begin{array}{r}
p\left( \varphi \right) \left( N_{\sigma }-\mu \right) \in L^{2}\left(
0,\infty ;H^{-1}\left( \Omega \right) \right) , \\ 
\Delta \mu \in L^{2}\left( 0,\infty ;H^{-1}\left( \Omega \right) \right) ,
\\ 
\Delta N_{\sigma }\in L^{2}\left( 0,\infty ;H^{-1}\left( \Omega \right)
\right) .%
\end{array}%
\right. 
\end{equation*}%
Considering the above arguments in the variational formulation (3.12)-(3.13), we
deduce 
\begin{equation}
\left\{ 
\begin{array}{r}
\varphi _{t}\in L^{2}\left( 0,\infty ;H^{-1}\left( \Omega \right) \right) ,
\\ 
\sigma _{t}\in L^{2}\left( 0,\infty ;H^{-1}\left( \Omega \right) \right) .%
\end{array}%
\right.  \tag{4.27}
\end{equation}\\

\textbf{Energy identity in the case }$q\leq 4:$ If we choose $\eta =\mu $ and $\xi =N_{\sigma }$ as test functions in the
variational formulation (3.12)-(3.13), we can infer (for the details see \cite{frigieri})%
\begin{equation*}
\frac{d}{dt}\left( \frac{1}{2}\left\Vert \nabla \varphi \right\Vert
_{L^{2}\left( \Omega \right) }^{2}+\frac{\chi _{\sigma }}{2}\left\Vert
\sigma \right\Vert _{L^{2}\left( \Omega \right) }^{2}+\int\limits_{\Omega
}\Psi \left( \varphi \right) dx+\chi _{\varphi }\int\limits_{\Omega }\sigma
\left( 1-\varphi \right) dx\right)
\end{equation*}%
\begin{equation*}
+\int\limits_{\Omega }m\left( \varphi \right) \left\vert \nabla \mu
\right\vert ^{2}dx+\int\limits_{\Omega }n\left( \varphi \right) \left\vert
\nabla N_{\sigma }\right\vert ^{2}dx+\int\limits_{\Omega }p\left( \varphi
\right) \left( N_{\sigma }-\mu \right) ^{2}dx=0.
\end{equation*}%
Hence integrating the above identity with respect to $t$, we obtain (3.19)
with an equality sign.\\
\\
\textbf{Step 2: (case }$4<\rho <6$\textbf{)}\\

In this case, potential $\Psi $ is approximated by a sequence of potentials $%
\Psi _{m}$ constructed in Lemma 4.1. Now, let us consider the system
(1.1)-(1.5) with $\Psi _{m}$ instead of $\Psi $ and called the new system $%
P_{m}.$ Since $\Psi _{m}$ satisfies the assumption ($\Psi $), \ from Step 1
we deuce that there exist a weak solution $\left( \varphi _{m},\sigma
_{m}\right) $ of the system $P_{m}$ for each $m\in 
\mathbb{N}
$. Moreover, this solution satisfied the following regularities%
\begin{equation*}
\left\{ 
\begin{array}{l}
\varphi _{m}\in L^{\infty }\left( 0,T;H^{1}\left( \Omega \right) \right)
\cap L^{2}\left( 0,T;H^{3}\left( \Omega \right) \right) , \\ 
\sigma _{m}\in L^{\infty }\left( 0,T;L^{2}\left( \Omega \right) \right) \cap
L^{2}\left( 0,T;H^{1}\left( \Omega \right) \right) , \\ 
\mu _{m}\in L^{2}\left( 0,T;H^{1}\left( \Omega \right) \right) , \\ 
\varphi _{mt},\sigma _{mt}\in L^{r}\left( 0,T;D\left( A^{-1}\right) \right) .%
\end{array}%
\right.
\end{equation*}%
and the energy inequality%
\begin{align*}
&E\left( \varphi _{m}\left( t\right) ,\sigma _{m}\left( t\right) \right)
+\int\limits_{0}^{t}\int\limits_{\Omega }\left( m\left( \varphi
_{m}\right) \left\vert \nabla \mu _{m}\right\vert ^{2}+n\left( \varphi
_{m}\right) \left\vert \nabla N_{\sigma _{m}}\right\vert ^{2}+p\left(
\varphi _{m}\right) \left( N_{\sigma _{m}}-\mu _{m}\right) ^{2}\right)
dxd\tau 
\\
&\leq E\left( \varphi _{0},\sigma _{0}\right) .
\end{align*}%
Recalling the assumptions ($\Psi $), (P) and the properties of the sequence
of $\Psi _{m}$, we can obtain the following controls%
\begin{equation}
\left\{
\begin{array}{r}
\left\Vert \varphi _{m}\right\Vert _{L^{\infty }\left( 0,T;H^{1}\left(
\Omega \right) \right) }\leq C, \\ 
\left\Vert \sigma _{m}\right\Vert _{L^{\infty }\left( 0,T;L^{2}\left( \Omega
\right) \right) \cap L^{2}\left( 0,T;H^{1}\left( \Omega \right) \right)
}\leq C,\\ 
\left\Vert \mu _{m}\right\Vert _{L^{2}\left( 0,T;H^{1}\left( \Omega \right)
\right) }\leq C.
\end{array} 
\right.  \tag{4.28}
\end{equation}%
On the other hand, if we multiply the identity $\mu _{m}=-\Delta \varphi
_{m}+\Psi _{m}^{\prime }\left( \varphi _{m}\right) -\chi _{\varphi }\sigma
_{m}$ with $\Delta \varphi _{m}$, from the properties of $\Psi _{m}$, we
obtain%
\begin{equation*}
\left\Vert \Delta \varphi _{m}\right\Vert _{L^{2}\left( \Omega \right)
}^{2}\leq \left\Vert \nabla \mu _{m}\right\Vert _{L^{2}\left( \Omega \right)
}^{2}+c\left\Vert \nabla \varphi _{m}\right\Vert _{L^{2}\left( \Omega
\right) }^{2}+c\left\Vert \nabla \sigma _{m}\right\Vert _{L^{2}\left( \Omega
\right) }^{2}
\end{equation*}%
together with (4.28),which yields the control of the sequence $\varphi _{m}$
in $L^{\infty }\left( 0,T;H^{1}\left( \Omega \right) \right) \cap
L^{2}\left( 0,T;H^{2}\left( \Omega \right) \right) $. Furthermore, by using a bootstrapping method (see \cite{frigieri} for details) we deduce the control of
the sequence $\varphi _{m}$ in $L^{\infty }\left( 0,T;H^{1}\left( \Omega
\right) \right) \cap L^{2}\left( 0,T;H^{3}\left( \Omega \right) \right) $.
Consequently, noting that%
\begin{equation*}
\Psi _{m}^{\prime }\left( \varphi _{m}\right) \rightarrow \Psi ^{\prime
}\left( \varphi \right) \text{ almost everywhere in }\Omega ,
\end{equation*}%
as in the first step we can pass to the limit in the problem (P$_{m}$) and
obtain the existence of weak solutions in the case $\rho \in \left(
4,6\right) $. For $q\in \left[ 1.9\right) $, energy inequality (3.19) and the regularity of time derivatives as in (4.22) can be established with similar arguments done in the first step. Moreover, in the case $q\leq 4$ we obtain an equality sign in (3.19) and the further regularity for time derivatives as in (4.27) (cf. \cite{frigieri}).
\end{proof}

Now, we prove the uniqueness of weak solutions and their continuous dependence to the initial data. At this point, we need to
note that, uniqueness is established in the case mobilities $m\left( \varphi
\right) $ and $n\left( \varphi \right) $ are constant. For simplicity, we
assume that they are equal to one.
\subsection{Uniqueness and continuous dependence to the initial data}

\begin{proof}[Proof of Theorem 3.2]
Firstly, observe that we can rewrite variational formulation (3.12)-(3.13) as
follows%
\begin{align}
\left\langle\left\langle \varphi _{t},\chi \right\rangle\right\rangle +\left\langle A \mu
, \chi \right\rangle&=\left\langle p\left( \varphi \right) N_{\sigma }-\left( p\left( \varphi
\right) -1\right) \mu ,\chi \right\rangle  \tag{4.29}\\
\mu &=A\varphi +G^{\prime }\left( \varphi \right) -\chi _{\varphi }\sigma 
\tag{4.30}\\
\left\langle\left\langle \sigma _{t},\xi \right\rangle\right\rangle +\chi _{\sigma }\left\langle
A \sigma , \xi \right\rangle  &=\chi _{\varphi }\left\langle A \varphi ,\xi
\right\rangle -\left\langle p\left( \varphi \right) N_{\sigma }-p\left( \varphi \right)
\mu -\chi _{\sigma }\sigma +\chi _{\varphi }\varphi ,\xi \right\rangle 
\tag{4.31}
\end{align}%
for all $\chi $, $\xi \in H^{1}\left( \Omega \right) $, where $G\left(
s\right) :=\Psi \left( s\right) -\frac{1}{2}s^{2}.$\newline
Let $\left[ \varphi _{i},\sigma _{i}\right] $, $i=1,2$ be two weak solutions
of the problem (1.1)-(1.5) with initial data $\left[ \varphi _{i}\left(
0\right) ,\sigma _{i}\left( 0\right) \right] =\left[ \varphi _{0i},\sigma
_{0i}\right] $, $i=1,2$. Defining $\varphi :=\varphi _{2}-\varphi _{1}$, $%
\sigma :=\sigma _{2}-\sigma _{1}$ and $\mu :=\mu _{2}-\mu _{1}$, from
(4.29)-(4.31), we obtain%
\begin{align}
\left\langle\left\langle \varphi _{t},\chi \right\rangle\right\rangle +\left\langle A \mu
,\chi \right\rangle=\left\langle \left( p\left( \varphi _{2}\right) -p\left( \varphi
_{1}\right) \right) \left( N_{\sigma _{2}}-\mu _{2}\right) +p\left( \varphi
_{1}\right) N_{\sigma }-\left( p\left( \varphi _{1}\right) -1\right) \mu
,\chi \right\rangle  \tag{4.32}\\
\mu =A\varphi +G^{\prime }\left( \varphi _{2}\right) -G^{\prime }\left(
\varphi _{1}\right) -\chi _{\varphi }\sigma  \tag{4.33}\\
\left\langle\left\langle \sigma _{t},\xi \right\rangle \right\rangle +\chi _{\sigma }\left\langle A \sigma , \xi \right\rangle  =\chi _{\varphi }\left\langle A \varphi , \xi
\right\rangle\nonumber \\
-\left\langle \left( p\left( \varphi _{2}\right) -p\left( \varphi
_{1}\right) \right) \left( N_{\sigma _{2}}-\mu _{2}\right) -p\left( \varphi
_{1}\right) \left( N_{\sigma }-\mu \right) -\chi _{\sigma }\sigma +\chi
_{\varphi }\varphi ,\xi \right\rangle  \tag{4.34}
 \end{align}%
for all $\chi $, $\xi \in H^{1}\left( \Omega \right) $. Choosing $\chi
=A^{-1}\varphi $ in (4.32) and $\xi =A^{-1}\sigma $ in (4.34) and summing
the obtained identities, we infer that%
\begin{align*}
&\frac{1}{2}\frac{d}{dt}\left\Vert \varphi \right\Vert
_{\overset{\ast }{H^{1}}\left( \Omega \right)}^{2}+\left\Vert \varphi \right\Vert _{H^{1}}^{2}+\left\langle  G^{\prime
}\left( \varphi _{2}\right) -G^{\prime }\left( \varphi _{1}\right) ,\varphi
\right\rangle  +\frac{1}{2}\frac{d}{dt}\left\Vert \sigma \right\Vert _{\overset{\ast }{H^{1}}\left( \Omega \right)}^{2}+\chi _{\sigma }\left\Vert \sigma \right\Vert _{{L^{2}}\left( \Omega \right)}^{2}\\
&=\int\limits_{\Omega } \chi
_{\varphi }\sigma \varphi dx+\left\langle  \left( p\left( \varphi _{2}\right) -p\left( \varphi _{1}\right)
\right) \left( N_{\sigma _{2}}-\mu _{2}\right) +p\left( \varphi _{1}\right)
N_{\sigma }-\left( p\left( \varphi _{1}\right) -1\right) \mu ,A^{-1}\varphi\right\rangle \\
&-\left\langle  \left( p\left( \varphi _{2}\right) -p\left( \varphi _{1}\right)
\right) \left( N_{\sigma _{2}}-\mu _{2}\right) -p\left( \varphi _{1}\right)
\left( N_{\sigma }-\mu \right) -\chi _{\sigma }\sigma +\chi _{\varphi
}\varphi ,A^{-1}\sigma \right\rangle.  \tag{4.35}
\end{align*}%
Using the same arguments used \cite{frigieri} to obtain [3.65], we get%
\begin{equation}
\left\vert \left\langle  p\left( \varphi _{1}\right) N_{\sigma }-\left( p\left(
\varphi _{1}\right) -1\right) \mu ,A^{-1}\varphi \right\rangle  \right\vert \leq 
\frac{1}{12}\left\Vert \varphi \right\Vert _{H^{1}\left( \Omega \right)}^{2}+\Lambda \alpha
_{1}^{2}\left( t\right) \left( \left\Vert \sigma \right\Vert
_{\overset{\ast }{H^{1}}\left( \Omega \right)}^{2}+\left\Vert \varphi \right\Vert _{\overset{\ast }{H^{1}}\left( \Omega \right)}^{2}\right) , 
\tag{4.36}
\end{equation}%
where%
\begin{equation*}
\alpha _{1}\left( t\right) :=c\left( \left\Vert p\left( \varphi _{1}\left(
t\right) \right) \right\Vert _{L^{3}\left( \Omega \right) }+\left\Vert
\varphi _{1}\left( t\right) \right\Vert _{L^{\infty }\left( \Omega \right)
}^{q}+\left\Vert p^{\prime }\left( \varphi _{1}\left( t\right) \right)
\nabla \varphi _{1}\left( t\right) \right\Vert _{L^{3}\left( \Omega \right)
}+1\right)
\end{equation*}%
and $\Lambda $ denotes a positive constant that depends on the norms of the
initial data of the two solutions. Also, using the same arguments used in \cite{frigieri}
to obtain [3.72], we have%
\begin{equation}
\left\vert \left\langle  \left( p\left( \varphi _{2}\right) -p\left( \varphi
_{1}\right) \right) \left( N_{\sigma _{2}}-\mu _{2}\right) ,A^{-1}\varphi
\right\rangle  \right\vert \leq \frac{1}{12}\left\Vert \varphi \right\Vert
_{H^{1}\left( \Omega \right)}^{2}+\alpha _{2}^{4/3}\left( t\right) \left\Vert \varphi \right\Vert
_{\overset{\ast }{H^{1}}\left( \Omega \right)}^{2},  \tag{4.37}
\end{equation}%
where%
\begin{equation*}
\alpha _{2}\left( t\right) :=c\left( 1+\left\Vert \varphi _{1}\left(
t\right) \right\Vert _{L^{\infty }\left( \Omega \right) }^{q-1}+\left\Vert
\varphi _{2}\left( t\right) \right\Vert _{L^{\infty }\left( \Omega \right)
}^{q-1}\right) \left( \left\Vert N_{\sigma _{2}}\left( t\right) \right\Vert
_{L^{3}\left( \Omega \right) }+\Lambda \left\Vert \mu _{2}\left( t\right)
\right\Vert _{H^{1}\left( \Omega \right)}^{3/4}\right) .
\end{equation*}%
Moreover, using the same arguments used in \cite{frigieri} to obtain [3.73] and [3.74], we
obtain%
\begin{align*}
&\left\vert \left\langle  \left( p\left( \varphi _{2}\right) -p\left( \varphi
_{1}\right) \right) \left( N_{\sigma _{2}}-\mu _{2}\right) +\chi _{\varphi
}\varphi ,A^{-1}\sigma \right\rangle  \right\vert\\
 &\leq \frac{1}{12}\left\Vert
\varphi \right\Vert _{H^{1}\left( \Omega \right)}^{2}+\alpha _{2}^{4/3}\left( t\right) \left(
\left\Vert \varphi \right\Vert _{\overset{\ast }{H^{1}}\left( \Omega \right)}^{2}+\left\Vert \sigma \right\Vert
_{\overset{\ast }{H^{1}}\left( \Omega \right)}^{2}\right) ,  \tag{4.38}
\end{align*}%
and
\begin{align*}
&\left\vert \left\langle  p\left( \varphi _{1}\right) \left( N_{\sigma }-\mu \right)
-\chi _{\sigma }\sigma ,A^{-1}\sigma \right\rangle  \right\vert\\
 &\leq \frac{1}{12}%
\left\Vert \varphi \right\Vert _{H^{1}\left( \Omega \right)}^{2}+\Gamma \left( 1+\alpha
_{1}^{2}\left( t\right) \right) \left( \left\Vert \varphi \right\Vert
_{\overset{\ast }{H^{1}}\left( \Omega \right)}^{2}+\left\Vert \sigma \right\Vert _{\overset{\ast }{H^{1}}\left( \Omega \right)}^{2}\right) . 
\tag{4.39}
\end{align*}%
On the other hand, because of the chemotaxis we have an additional term $%
\int\limits_{\Omega } \chi _{\varphi }\sigma \varphi dx$ on the right hand side of (4.39).
For this term, we have%
\begin{equation}
\left\vert \int\limits_{\Omega } \chi _{\varphi }\sigma \varphi dx\right\vert \leq \chi
_{\varphi }^{2}3\left\Vert \sigma \right\Vert _{\overset{\ast }{H^{1}}\left( \Omega \right)}^{2}+\frac{1}{12}%
\left\Vert \varphi \right\Vert _{H^{1}\left( \Omega \right)}^{2}.  \tag{4.40}
\end{equation}%
Moreover, choosing $\beta :=\alpha +1-c_{1}$, we get
\begin{equation}
\left\langle  G^{\prime }\left( \varphi _{2}\right) -G^{\prime }\left( \varphi
_{1}\right) ,\varphi \right\rangle  \geq -\beta \left\Vert \varphi \right\Vert
_{L^{2}\left( \Omega \right)}^{2}\geq -\frac{1}{12}\left\Vert \varphi \right\Vert
_{H^{1}\left( \Omega \right)}^{2}-c\left\Vert \varphi \right\Vert _{\overset{\ast }{H^{1}}\left( \Omega \right)}^{2}  \tag{4.41}
\end{equation}%
Finally considering the estimates (4.36)-(4.41) in (4.35), we have%
\begin{equation*}
\frac{d}{dt}\left( \left\Vert \varphi \right\Vert _{\overset{\ast }{H^{1}}\left( \Omega \right)}^{2}+\left\Vert
\sigma \right\Vert _{\overset{\ast }{H^{1}}\left( \Omega \right)}^{2}\right) +\left\Vert \varphi \right\Vert
_{H^{1}\left( \Omega \right)}^{2}+\chi _{\sigma }\left\Vert \sigma \right\Vert _{L^{2}\left( \Omega \right)}^{2}\leq
\gamma \left( \left\Vert \varphi \right\Vert _{\overset{\ast }{H^{1}}\left( \Omega \right)}^{2}+\left\Vert \sigma
\right\Vert _{\overset{\ast }{H^{1}}\left( \Omega \right)}^{2}\right)
\end{equation*}%
where $\gamma :=\Lambda \left( \alpha _{1}^{2}+\alpha _{2}^{4/3}+1\right) \in
L^{1}\left( 0,T\right) $. Hence, applying Gronwall's lemma we obtained the
desired result.
\end{proof}

In the next theorem, proof of the existence of strong solutions is stated.

\subsection{Strong solutions}
\begin{proof}[Proof of Theorem 3.3]
We will prove the theorem by obtaining formal estimates which can be justified by Galerkin scheme.\newline
Firstly, testing (1.1) by $\mu _{t}$ in $L^{2}\left( \Omega \right) $ and using
(1.2), we obtain%
\begin{align*}
&\frac{1}{2}\frac{d}{dt}\left\Vert \nabla \mu \right\Vert_{L^{2}\left( \Omega \right)} ^{2}+\left\Vert
\nabla \varphi _{t}\right\Vert _{L^{2}\left( \Omega \right)}^{2}+\int\limits_{\Omega } \Psi ^{\prime \prime }\left(
\varphi \right) \varphi _{t}^{2}dx+\frac{1}{2}\frac{d}{dt}\int\limits_{\Omega } p\left(
\varphi \right) \mu ^{2}dx\\
&=\int\limits_{\Omega } \chi _{\varphi }\sigma _{t}\varphi _{t}dx+\frac{1}{2}\int\limits_{\Omega } p^{\prime
}\left( \varphi \right) \varphi _{t}\mu ^{2}dx+\left\langle  p\left( \varphi \right\rangle 
N_{\sigma },\mu _{t}\right) .  \tag{4.42}
\end{align*}%
Then, testing (1.3) by $N_{\sigma t}$ in $L^{2}\left( \Omega \right) $, we have%
\begin{align*}
&\chi _{\sigma }\left\Vert \sigma _{t}\right\Vert_{L^{2}\left( \Omega \right)} ^{2}+\frac{1}{2}\frac{d}{dt}%
\left\Vert \nabla N_{\sigma }\right\Vert_{L^{2}\left( \Omega \right)} ^{2}+\frac{1}{2}\frac{d}{dt}\int\limits_{\Omega }
p\left( \varphi \right) \left( N_{\sigma }\right) ^{2}dx\\
&=\int\limits_{\Omega } \chi _{\varphi }\sigma _{t}\varphi _{t}dx+\frac{1}{2}\int\limits_{\Omega } p^{\prime
}\left( \varphi \right) \varphi _{t}\left( N_{\sigma }\right) ^{2}dx+\left\langle 
p\left( \varphi \right) \mu ,N_{\sigma t}\right\rangle   \tag{4.43}.
\end{align*}%
Summing (4.42) with (4.43) and applying Young inequality, we get%
\begin{align*}
&\frac{1}{2}\frac{d}{dt}\left( \left\Vert \nabla \mu \right\Vert_{L^{2}\left( \Omega \right)}
^{2}+\left\Vert \nabla N_{\sigma }\right\Vert_{L^{2}\left( \Omega \right)} ^{2}+\int \limits_{\Omega } p\left( \varphi
\right) \left( N_{\sigma }-\mu \right) ^{2}dx\right)\\
&+\left\Vert \nabla \varphi _{t}\right\Vert_{L^{2}\left( \Omega \right)} ^{2}+\dfrac{\chi _{\sigma}}{2}\left\Vert
\sigma _{t}\right\Vert _{L^{2}\left( \Omega \right)}^{2}+\int \limits_{\Omega } \Psi ^{\prime \prime }\left( \varphi
\right) \varphi _{t}^{2}dx\\
&\leq c\left\Vert
\varphi _{t}\right\Vert_{L^{2}\left( \Omega \right)}^{2}+\frac{1}{2}\int \limits_{\Omega } p^{\prime
}\left( \varphi \right) \varphi _{t}\left( N_{\sigma }-\mu \right) ^{2}dx. 
\tag{4.44}
\end{align*}%
Following the same procedure in \cite{frigieri}, from (4.44) it follows that
\begin{align*}
&\frac{1}{2}\frac{d}{dt}\left( \left\Vert \nabla \mu \right\Vert_{L^{2}\left( \Omega \right)}
^{2}+\left\Vert \nabla N_{\sigma }\right\Vert_{L^{2}\left( \Omega \right)} ^{2}+\int  \limits_{\Omega }p\left( \varphi
\right) \left( N_{\sigma }-\mu \right) ^{2}dx\right)+\frac{1}{2}\left\Vert \nabla \varphi _{t}\right\Vert_{L^{2}\left( \Omega \right)} ^{2}+\frac{\chi
	_{\sigma }}{2}\left\Vert \sigma _{t}\right\Vert_{L^{2}\left( \Omega \right)} ^{2}\\
&\leq\widehat{c}\left\Vert \varphi _{t}\right\Vert_{L^{2}\left( \Omega \right)} ^{2}+\Lambda \left( \left\Vert
\nabla N_{\sigma }\right\Vert_{L^{2}\left( \Omega \right)} ^{2}\left\Vert N_{\sigma }\right\Vert
_{H^{1}\left( \Omega \right) }^{2}+\left\Vert \nabla \mu \right\Vert
^{2}\left\Vert \mu \right\Vert _{H^{1}\left( \Omega \right) }^{2}\right)\\
&+\Lambda \left( \left\Vert N_{\sigma }\right\Vert _{H^{1}\left( \Omega
	\right) }^{2}+\left\Vert \mu \right\Vert _{H^{1}\left( \Omega \right)
}^{2}+\left\Vert p\left( \varphi \right) \right\Vert _{L^{6/5}\left( \Omega
	\right) }^{4}\right) ,  \tag{4.45}
\end{align*}%
where $ \Lambda $  is a positive constant depending on the norm of the initial data and $ \Psi, p, \Omega $.
Then, testing (1.1) by $\varphi _{t}$ in $L^{2}\left(\Omega \right) $, we get
\begin{align*}
&\frac{1}{2}\left\Vert \varphi _{t}\right\Vert_{L^{2}\left( \Omega \right)} ^{2}+\frac{1}{2}\frac{d}{dt}%
\left\Vert \Delta \varphi \right\Vert_{L^{2}\left( \Omega \right)} ^{2}\leq \frac{1}{8\widehat{c}}\left\Vert \nabla \varphi _{t}\right\Vert_{L^{2}\left( \Omega \right)}
^{2}+\chi _{\varphi }^{2}\left\Vert \sigma \right\Vert _{H^{1}\left( \Omega
	\right) }^{2}\\
&+c\left\Vert \Psi ^{\prime \prime }\left( \varphi \right)
\right\Vert _{L^{7/2}}^{2}\left\Vert \nabla \varphi \right\Vert
_{L^{14/3}}^{2}+c\left\Vert p\left( \varphi \right) \right\Vert
_{L^{3}\left( \Omega \right) }^{2}\left( \left\Vert \mu \right\Vert
_{H^{1}\left( \Omega \right) }^{2}+\left\Vert N_{\sigma }\right\Vert
_{H^{1}\left( \Omega \right) }^{2}\right) .  \tag{4.46}
\end{align*}%
Summing (4.45) and (4.46), using the same arguments in \cite{frigieri}, we infer
\begin{align*}
&\frac{1}{2}\frac{d}{dt}\left( \left\Vert \nabla \mu \right\Vert_{L^{2}\left( \Omega \right)}
^{2}+\left\Vert \nabla N_{\sigma }\right\Vert _{L^{2}\left( \Omega \right)}^{2}+c\left\| \Delta\varphi\right\|_{L^{2}\left( \Omega \right)} ^{2}+
\int \limits_{\Omega } p\left( \varphi
\right) \left( N_{\sigma }-\mu \right) ^{2}dx\right) \\
&+\frac{1}{4}\left\Vert
\nabla \varphi _{t}\right\Vert_{L^{2}\left( \Omega \right)} ^{2}+\frac{\chi _{\sigma }}{2}\left\Vert
\sigma _{t}\right\Vert_{L^{2}\left( \Omega \right)} ^{2}\leq \pi _{1}\left( \left\Vert \nabla \mu \right\Vert_{L^{2}\left( \Omega \right)} ^{2}+\left\Vert \nabla
N_{\sigma }\right\Vert_{L^{2}\left( \Omega \right)} ^{2}\right) +\pi _{2},  \tag{4.47}
\end{align*}%
where
\begin{align*}
&\pi _{1}:=c \left\Vert p\left( \varphi \right) \right\Vert
_{L^{3}\left( \Omega \right) }^{2}+\Lambda\left( \left\Vert \nabla \mu \right\Vert_{L^{2}\left( \Omega \right)} ^{2}+\left\Vert \nabla
N_{\sigma }\right\Vert_{L^{2}\left( \Omega \right)} ^{2}\right) +\Lambda \text{,}\\
 &\pi _{2}:=c\left\Vert \Psi ^{\prime \prime }\left( \varphi \right)
\right\Vert _{L^{7/2}\left( \Omega \right)}^{2}\left\Vert \nabla \varphi \right\Vert
_{L^{14/3}\left( \Omega \right)}^{2}+\Lambda \left\Vert P\left( \varphi \right) \right\Vert
_{L^{3}\left( \Omega \right) }^{2}+\Lambda.
\end{align*}%
Moreover, we have%
\begin{equation*}
\left\Vert \pi _{1}\right\Vert _{L^{1}\left( 0,T\right) }\leq \Lambda(1+T)  \text{, }\left\Vert \pi _{2}\right\Vert _{L^{1}\left( 0,T\right)
}\leq \Lambda(1+T)  .
\end{equation*}%
Since $\varphi _{0}\in H^{3}\left( \Omega \right) $ and $ \partial _{\nu }\varphi_{0}=0 $ also the initial data in the Galerkin approximation are uniformly bounded in $ H^{3}\left( \Omega \right) $ as well as  $\mu \in
H^{1}\left( \Omega \right) $. Then, we apply Gronwall's lemma and from (4.47) it follows that
\begin{align}
&\nabla \mu, \Delta \varphi \in L^{\infty }\left( 0,T;L^{2}\left( \Omega \right)
\right),  & \sigma \in L^{\infty }\left( 0,T;H^{1}\left( \Omega \right)\right)\nonumber\\
&\nabla \varphi _{t}, \sigma _{t} \in L^{2}\left(
0,T;L^{2}\left( \Omega \right) \right), &\Psi ^{\prime }\left( \varphi \right) \in L^{\infty }\left(
0,T;L^{2}\left( \Omega \right) \right) \nonumber .
\end{align}
Hence, $\mu \in L^{\infty
}\left( 0,T;L^{2}\left( \Omega \right) \right) $ which yields%
\begin{equation}
\mu \in L^{\infty }\left( 0,T;H^{1}\left( \Omega \right) \right) . 
\tag{4.48}
\end{equation}%
In addition, using the elliptic regularity theory, we infer $\varphi \in L^{\infty }\left( 0,T;H^{2}\left( \Omega
\right) \right) $. Then, we obtain  $\Psi ^{\prime }\left( \varphi \right) \in L^{\infty
}\left( 0,T;H^{1}\left( \Omega \right) \right) $. Thus, together with (4.52), it follows that $\Delta \varphi \in L^{\infty }\left( 0,T;H^{1}\left( \Omega \right) \right) $. Therefore, by the elliptic regularity theory, we deduce
\begin{equation*}
\varphi \in L^{\infty }\left( 0,T;H^{3}\left( \Omega \right) \right) . 
\end{equation*}%
On the other hand, by integrating (4.50) from $0$ to $t$, we infer $\varphi _{t}\in L^{2}\left( 0,T;L^{2}\left(
\Omega \right) \right) $. Since $\nabla\varphi _{t}\in L^{2}\left( 0,T;L^{2}\left(
\Omega \right) \right) $, we deduce $\varphi _{t}\in L^{2}\left(
0,T;H^{1}\left( \Omega \right) \right) $.
Considering obtained controls in the problem (1.1)-(1.3), we also infer $ \sigma\in L^{2}\left( 0,T;H^{2}\left(
\Omega \right) \right) $ and $ \mu\in L^{2}\left( 0,T;H^{3}\left(
\Omega \right) \right) $.
\end{proof}
\begin{remark}
	If $ \Psi \in C^{4}(\mathbb{R}) $ and  the boundary $ \Gamma $ is smooth enough, it can be easily seen by using elliptic regularity that the strong solutions of the problem (1.1)-(1.5) have the further regularity $ \varphi \in L^{2}(0,T;H^{5}(\Omega)) $.
\end{remark}
Hence, as a consequence of Theorem 3.1 and Theorem 3.2, problem (1.1)-(1.5), by
the formula $S\left( t\right) \left( \varphi _{0},\sigma _{0}\right) =\left(
\varphi \left( t\right) ,\sigma \left( t\right) \right) $, generates a
weakly continuous semigroup $\left\{ S\left( t\right) \right\} _{t\geq 0}$
in $H^{1}\left( \Omega \right) \times L^{2}\left( \Omega \right) $, where $\left(
\varphi \left( t\right) ,\sigma \left( t\right) \right) $ is a weak solution determined by Theorem 3.1.

\section{ Long-time Dynamics of weak solutions}

\subsection{Asymptotic compactness}
We start with the following asymptotic compactness lemma:
\begin{lemma}
Let the conditions ($ M_{2} $), ($\Psi $) and ($P_{2}$) hold and $B$ be a bounded subset
of $H^{1}\left( \Omega \right) \times L^{2}\left( \Omega \right) $. Then
every sequence of the form $\left\{ S\left( t_{k}\right) \left( \xi
_{k},\eta _{k}\right) \right\} _{k=1}^{\infty },$ where $\left\{ \left( \xi
_{k},\eta _{k}\right) \right\} _{k=1}^{\infty }\subset B$, $t_{k}\rightarrow
\infty $, has a convergent subsequence in $H^{1}\left( \Omega \right) \times
L^{2}\left( \Omega \right) $.
\end{lemma}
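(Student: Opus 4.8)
The plan is to establish the asymptotic compactness by the \emph{energy method}. The first ingredient is a uniform-in-time bound on trajectories. Since $E$ is nonincreasing along solutions, $E(S(t)(\xi,\eta))\le E(\xi,\eta)$ for all $t\ge0$; and, arguing exactly as in the estimate preceding (4.7) — splitting the cross term $\chi_\varphi\int_\Omega\sigma(1-\varphi)$ and using $R_1>2\chi_\varphi^2/\chi_\sigma$ in (3.5) — one has $E(\varphi,\sigma)\ge c_0\big(\|\varphi\|_{H^1(\Omega)}^2+\|\sigma\|_{L^2(\Omega)}^2\big)-C_0$ with $c_0>0$. As $E$ is also bounded on bounded subsets of $H^1(\Omega)\times L^2(\Omega)$, this gives $\sup_{t\ge0}\|S(t)(\xi,\eta)\|_{H^1(\Omega)\times L^2(\Omega)}\le\rho_B$ for all $(\xi,\eta)\in B$; feeding this back into the a priori estimates from the proof of Theorem 3.1 applied on unit time windows, and using the integrability of the dissipation in (3.19), one also obtains the uniform-in-time bound $\int_t^{t+1}\big(\|\varphi\|_{H^3(\Omega)}^2+\|\mu\|_{H^1(\Omega)}^2+\|\nabla N_\sigma\|_{L^2(\Omega)}^2+\|\varphi_t\|_{(H^1(\Omega))^\ast}^2+\|\sigma_t\|_{(H^1(\Omega))^\ast}^2\big)\,d\tau\le C_B$ for all $t\ge0$.

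Next, for $k$ large set $w_k(s):=S(t_k+s)(\xi_k,\eta_k)=(\varphi_k(s),\sigma_k(s))$ for $s\ge-t_k$, with chemical potential $\mu_k$. On each window $[-T,0]$ the $w_k$ are, for $k$ large, uniformly bounded in $L^\infty(-T,0;H^1(\Omega)\times L^2(\Omega))$, with $\varphi_k$ bounded in $L^2(-T,0;H^3(\Omega))$, $\sigma_k$ and $\mu_k$ in $L^2(-T,0;H^1(\Omega))$, and $(\varphi_{k,t},\sigma_{k,t})$ in $L^2(-T,0;(H^1(\Omega))^\ast)$; a diagonal extraction over $T\in\mathbb N$ gives limits $(\varphi,\sigma,\mu)$ with the corresponding weak and weak-$\ast$ convergences on every $[-T,0]$ and, by the Aubin--Lions--Simon lemma, $\varphi_k\to\varphi$ in $L^2(-T,0;H^1(\Omega))$, $\sigma_k\to\sigma$ in $L^2(-T,0;L^2(\Omega))$, $\varphi_k\to\varphi$ a.e., and (after a further subsequence) $w_k(s)\to w(s)$ in $H^1(\Omega)\times L^2(\Omega)$ for a.e.\ $s\in(-\infty,0)$. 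Passing to the limit in (3.12)--(3.13) exactly as in the proof of Theorem 3.1 — the only nonlinear terms, $\Psi'(\varphi_k)$ and $p(\varphi_k)(N_{\sigma_k}-\mu_k)$, being handled via the a.e.\ convergence, the growth assumptions ($\Psi$), (P2), the embedding $H^1(\Omega)\hookrightarrow L^6(\Omega)$, and the argument behind (4.24) — one finds that $w=(\varphi,\sigma)$ is, on every $[-T,0]$, the (by Theorem 3.2, unique) weak solution of (1.1)--(1.4) with datum $w(-T)$, so that it satisfies the energy identity $E(w(s))+\int_{-T}^{s}\!\int_\Omega\big(|\nabla\mu|^2+|\nabla N_\sigma|^2+p(\varphi)(N_\sigma-\mu)^2\big)\,dx\,d\tau=E(w(-T))$ for all $s\in[-T,0]$.

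Now fix $s_0\in(-\infty,0)$ at which $w_k(s_0)\to w(s_0)$ in $H^1(\Omega)\times L^2(\Omega)$, whence $E(w_k(s_0))\to E(w(s_0))$ by continuity of $E$. Writing $D(w):=\int_\Omega\big(|\nabla\mu|^2+|\nabla N_\sigma|^2+p(\varphi)(N_\sigma-\mu)^2\big)$ for the dissipation, weak lower semicontinuity of the $L^2$ norm applied to $\nabla\mu_k\rightharpoonup\nabla\mu$, $\nabla N_{\sigma_k}\rightharpoonup\nabla N_\sigma$ and $\sqrt{p(\varphi_k)}(N_{\sigma_k}-\mu_k)\rightharpoonup\sqrt{p(\varphi)}(N_\sigma-\mu)$ in $L^2((s_0,0)\times\Omega)$ gives $\liminf_{k\to\infty}\int_{s_0}^{0}D(w_k(\tau))\,d\tau\ge\int_{s_0}^{0}D(w(\tau))\,d\tau$. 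Applying the energy identity on $[s_0,0]$ first to $w_k$ and then to $w$,
\[
\limsup_{k\to\infty}E(w_k(0))\le E(w(s_0))-\int_{s_0}^{0}D(w(\tau))\,d\tau=E(w(0)),
\]
while $w_k(0)\rightharpoonup w(0)$ in $H^1(\Omega)\times L^2(\Omega)$ and $E$ is weakly lower semicontinuous, hence $E(w_k(0))\to E(w(0))$.

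Finally, to upgrade this to strong convergence, split $E(\varphi,\sigma)=\tfrac12\|\nabla\varphi\|_{L^2(\Omega)}^2+\tfrac{\chi_\sigma}{2}\|\sigma\|_{L^2(\Omega)}^2+\int_\Omega\Psi(\varphi)+\chi_\varphi\int_\Omega\sigma(1-\varphi)$: by compactness of $H^1(\Omega)\hookrightarrow L^p(\Omega)$ for $p<6$ and $\rho<6$, Vitali's theorem gives $\int_\Omega\Psi(\varphi_k(0))\to\int_\Omega\Psi(\varphi(0))$, and the cross term converges because $\sigma_k(0)\rightharpoonup\sigma(0)$ in $L^2(\Omega)$ and $1-\varphi_k(0)\to1-\varphi(0)$ in $L^2(\Omega)$; therefore $\tfrac12\|\nabla\varphi_k(0)\|_{L^2(\Omega)}^2+\tfrac{\chi_\sigma}{2}\|\sigma_k(0)\|_{L^2(\Omega)}^2$ converges to the corresponding quantity for $w(0)$, which — combined with $\nabla\varphi_k(0)\rightharpoonup\nabla\varphi(0)$, $\sigma_k(0)\rightharpoonup\sigma(0)$ in $L^2(\Omega)$ and the separate lower semicontinuity of the two norms — forces $\|\nabla\varphi_k(0)\|_{L^2(\Omega)}\to\|\nabla\varphi(0)\|_{L^2(\Omega)}$ and $\|\sigma_k(0)\|_{L^2(\Omega)}\to\|\sigma(0)\|_{L^2(\Omega)}$, i.e.\ strong convergence of each; with $\varphi_k(0)\to\varphi(0)$ in $L^2(\Omega)$ this yields $S(t_k)(\xi_k,\eta_k)=w_k(0)\to w(0)$ in $H^1(\Omega)\times L^2(\Omega)$. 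The main obstacle will be the middle steps: securing the uniform-in-time higher regularity that makes the limiting object $w$ a bona fide weak solution on all of $(-\infty,0]$ — so that the energy \emph{identity}, not merely an inequality, is available for $w$ — and justifying the weak lower semicontinuity of the dissipation, which is exactly what makes the $\limsup$-bound above tight; the rest is a routine variation of the estimates already carried out in the proofs of Theorems 3.1--3.3.
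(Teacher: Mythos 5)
Your proposal is correct in substance, but it proves the lemma by a genuinely different route than the paper. The preparatory part coincides with the paper's Step~1: uniform-in-time bounds from the Lyapunov structure and the coercivity of $E$ (the paper's (5.1)--(5.3)), unit-window estimates, Aubin--Lions, and a.e.-in-time strong convergence plus identification of the limit triple $(\varphi,\sigma,\mu)$. The decisive step, however, is different. You use Ball's energy-equation method: the energy \emph{identity} for the translated trajectories and for the limit solution on a window $[s_0,0]$, weak lower semicontinuity of the dissipation (after identifying the weak limit of $\sqrt{p(\varphi_k)}(N_{\sigma_k}-\mu_k)$), convergence of the energies at $t=0$, and then the split of $E$ into its quadratic part and its weakly continuous part ($\int_\Omega\Psi(\varphi)$ and the cross term) to upgrade weak to strong convergence. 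The paper never passes the energy identity to the limit trajectory; instead it writes the system satisfied by the difference of two members of the extracted subsequence, tests with $\mu_n-\mu_m$, $(\varphi_n-\varphi_m)_t$ and $N_{\sigma_n}-N_{\sigma_m}$, derives a Gronwall-type inequality for the difference energy $\mathcal{E}(\varphi_n-\varphi_m,\sigma_n-\sigma_m)$ ((5.20)--(5.25)), kills the inhomogeneous terms using the Step~1 convergences together with the cancellation argument (5.27) for $\int(\Psi'(\varphi_m)-\Psi'(\varphi_n))(\varphi_n-\varphi_m)_t$, and concludes that the double $\limsup$ of pairwise distances vanishes, invoking \cite[Lemma 3.4]{khan2} to extract a convergent subsequence. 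What each approach buys: yours avoids the difference-of-solutions estimate and the external Cauchy-subsequence lemma, at the price of needing the limit $w$ to be a weak solution obeying the energy identity (which is available here, since under (M2), (P2) every weak solution admits $\mu$ and $N_\sigma$ as test functions) and the lower semicontinuity of the $p(\varphi)$-weighted dissipation; the paper only needs the identity along the approximating trajectories but pays with the more delicate handling of the $\Psi'$ commutator term. Two small remarks: you do not need the limiting trajectory on all of $(-\infty,0]$ --- a single window such as $[-1,0]$ suffices, since a.e. $s_0\in(-1,0)$ gives the strong convergence you use --- and the assertion $w_k(0)\rightharpoonup w(0)$ should be justified by the $C([-T,0];L^2(\Omega)\times\overset{\ast}{H^1}(\Omega))$-type convergence coming from the time-derivative bounds, so that the value at the endpoint $s=0$ is correctly identified; both are routine.
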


\begin{proof}
\textbf{Step 1:} First of all, we will obtain the desired weak and strong
convergences, which are required in the second step to establish sequential
limit estimates. \newline
Let us consider the sequence $\left( \varphi _{k}\left( t\right) ,\sigma _{k}\left(
t\right) \right) =S\left( t+t_{k}-T_{0}\right) \left( \xi _{k},\eta
_{k}\right) $ for any $T_{0}>1$ and $t_{k}>T_{0}  $. Considering (3.5) in energy identity
(3.19), it follows that%
\begin{equation}
\sup_{0\leq t\leq \infty }\left\Vert \left( \varphi _{k}\left( t\right)
,\sigma _{k}\left( t\right) \right) \right\Vert _{H^{1}\left( \Omega \right)
\times L^{2}\left( \Omega \right) }\leq M  \tag{5.1}
\end{equation}%
where $M$ only depends on $B.$ Moreover, from (3.19) we also know that%
\begin{equation}
\left\{ 
\begin{array}{r}
\int\limits_{0}^{\infty }\left\Vert \nabla \mu _{k}\left( t\right)
\right\Vert _{L^{2}\left( \Omega \right) }^{2}dt\leq M, \\ 
\int\limits_{0}^{\infty }\left\Vert \nabla N_{\sigma k}\left( t\right)
\right\Vert _{L^{2}\left( \Omega \right) }^{2}dt\leq M, \\ 
\int\limits_{0}^{\infty }\left\Vert \sqrt{P\left( \varphi _{k}\left(
t\right) \right) }\left( N_{\sigma k}\left( t\right) -\mu _{k}\left(
t\right) \right) \right\Vert _{L^{2}\left( \Omega \right) }^{2}dt\leq M.%
\end{array}%
\right.  \tag{5.2}
\end{equation}%
Furthermore, in the existence part we have already proved that 
\begin{equation}
\left\{ 
\begin{array}{r}
\int\limits_{0}^{\infty }\left\Vert \left( \varphi _{kt}\left( t\right)
,\sigma _{kt}\left( t\right) \right) \right\Vert _{\overset{\ast }{H^{1}}\left( \Omega \right) \times \overset{\ast }{H^{1}}\left( \Omega \right) }^{2}dt\leq c\left( M\right), \\ 
\int\limits_{T}^{T+1}\left\Vert \varphi _{k}\left( t\right) \right\Vert
_{H^{3}\left( \Omega \right) }^{2}dt\leq c\left( M\right) , \\ 
\int\limits_{T}^{T+1}\left\Vert \sigma _{k}\left( t\right) \right\Vert
_{H^{1}\left( \Omega \right) }^{2}dt\leq c\left( M\right),%
\end{array}
\right. \tag{5.3}
\end{equation}%
$ \forall T\geq 0 $, where $c\left( M\right) $ is a nondecreasing, nonnegative function of $M$.
Hence, from (5.1) and (5.3)$_{1}$, there exists a subsequence $\left\{
k_{m}\right\} _{m=1}^{\infty }$ and $\left( \varphi _{m}\left( t\right)
,\sigma _{m}\left( t\right) \right) =S\left( t+t_{k_{m}}-T_{0}\right) \left(
\xi _{k_{m}},\eta _{k_{m}}\right) $ such that $t_{k_{m}}\geq T_{0}$ and%
\begin{equation}
\left\{ 
\begin{array}{lll}
\varphi _{m}\rightarrow \varphi& \text{ weakly star in }&L^{\infty }\left(
0,\infty ;H^{1}\left( \Omega \right) \right) , \\ 
\varphi _{mt}\rightarrow \varphi _{t}&\text{ weakly in }&L^{2}(0,\infty
;\overset{\ast }{H^{1}}\left( \Omega \right)), \\ 
\sigma _{m}\rightarrow \sigma &\text{ weakly star in }&L^{\infty }\left(
0,\infty ;L^{2}\left( \Omega \right) \right) , \\ 
\sigma _{mt}\rightarrow \sigma _{t}&\text{ weakly in }&L^{2}(0,\infty
;\overset{\ast }{H^{1}}\left( \Omega \right)),%
\end{array}%
\right.  \tag{5.4}
\end{equation}%
for some $\left( \varphi ,\sigma \right) \in L^{\infty }\left( 0,\infty
;H^{1}\left( \Omega \right) \times L^{2}\left( \Omega \right) \right) $ such that $\left( \varphi_{t} ,\sigma_{t} \right) \in
L^{2}(0,\infty ;\overset{\ast }{H^{1}}\left( \Omega \right) \times \overset{\ast }{H^{1}}\left(
\Omega \right)) $. Using Aubin-Lions lemma, it follows from (5.3) and (5.4) that%
\begin{equation}
\left\{ 
\begin{array}{llll}
\varphi _{m}\rightarrow \varphi& \text{ strongly in }&C\left( \left[ 0,T\right]
;L^{\kappa }\left( \Omega \right) \right) &\forall T\geq 0, \\ 
\varphi _{m}\rightarrow \varphi &\text{ strongly in }&L^{2}\left(
T,T+1;H^{2}\left( \Omega \right) \right) &\forall T\geq 0, \\ 
\sigma _{m}\rightarrow \sigma &\text{ strongly in }&L^{2}\left(
T,T+1;L^{2}\left( \Omega \right) \right) &\forall T\geq 0,%
\end{array}%
\right.  \tag{5.5}
\end{equation}%
where $2\leq \kappa <6  $. Moreover, from (5.5)$_{2}$ and (5.5)$_{3}$, we have%
\begin{equation}
N_{\sigma _{m}}\rightarrow N_{\sigma }\text{, strongly in }L^{2}\left(
T,T+1;L^{2}\left( \Omega \right) \right) \text{ \ }\forall T\geq 0. 
\tag{5.6}
\end{equation}%
Furthermore, from the Gagliardo-Nirenberg inequality we obtain%
\begin{equation*}
\left\Vert \varphi _{m}-\varphi \right\Vert _{L^{6}\left( \left(
T,T+1\right) \times \Omega \right) }\leq c_{1}\left\Vert \varphi
_{m}-\varphi \right\Vert _{L^{\infty }\left( T,T+1;L^{2}\left( \Omega
\right) \right) }^{2/3}\left\Vert \varphi _{m}-\varphi \right\Vert
_{L^{2}\left( T,T+1;H^{3}\left( \Omega \right) \right) }^{1/3}.
\end{equation*}%
together with (5.5)$_{1}$, which yields%
\begin{equation}
\varphi _{m}\rightarrow \varphi \text{ strongly in }L^{6}\left( \left(
T,T+1\right) \times \Omega \right) \text{ \ }\forall T\geq 0.  \tag{5.7}
\end{equation}%
On the other hand, from (3.4), we have%
\begin{equation*}
\left\vert \Psi \left( \varphi _{m}\right) \right\vert \leq c_{2}\left(
1+\left\vert \varphi _{m}\right\vert ^{\rho}\right) \text{, \  }2\leq \rho<6.
\end{equation*}%
Also from (5.5)$_{1}$, it follows that
\begin{equation}
\varphi _{m} \rightarrow \varphi  \text{
almost everywhere in }(0,\infty)\times\Omega .  \tag{5.8}
\end{equation}
Since $\Psi $ is continuous (5.8) yields that
\begin{equation*}
\Psi \left( \varphi _{m} \right) \rightarrow \Psi \left(
\varphi  \right) \text{ almost everywhere in }(0,\infty)\times\Omega .
\end{equation*}%
Moreover from (5.7), we have%
\begin{equation*}
\lim_{m\rightarrow \infty }\overset{T+1}{\underset{T}{\int }}\underset{%
\Omega }{\int }c_{2}\left( 1+\left\vert \varphi _{m}\right\vert ^{\rho}\right)
dxdt=\overset{T+1}{\underset{T}{\int }}\underset{\Omega }{\int }%
c_{2}\left( 1+\left\vert \varphi \right\vert ^{\rho}\right) dxdt.
\end{equation*}%
Then by the generalized Lebesgue theorem, we infer%
\begin{equation}
\lim_{m\rightarrow \infty }\overset{T+1}{\underset{T}{\int }}\underset{%
\Omega }{\int }\Psi \left( \varphi _{m}\left( t\right) \right)dxdt =\overset{T+1}{\underset{T}{\int }}\underset{%
\Omega }{\int }\Psi \left(
\varphi \left( t\right) \right)dxdt \text{ \ }\forall T\geq 0,  \tag{5.9}
\end{equation}%
and similarly recalling (5.5)$_{1}$, we also deduce%
\begin{equation}
\lim_{m\rightarrow \infty }\underset{\Omega }{\int }\Psi \left( \varphi
_{m}\left( T\right) \right)dx =\underset{\Omega }{\int }\Psi \left( \varphi \left( T\right) \right)dx 
\text{ \ }\forall T\geq 0\text{.}  \tag{5.10}
\end{equation}%
On the other hand, recalling that $\left\{ \Psi ^{\prime }\left( \varphi
_{m}\right) \right\} _{m=1}^{\infty }$ is uniformly bounded in $L^{2}\left(
T,T+1;H^{1}\left( \Omega \right) \right) $, $\forall T\geq 0$, from (5.8) there holds%
\begin{equation}
\Psi ^{\prime }\left( \varphi _{m}\right) \rightarrow \Psi ^{\prime }\left(
\varphi \right) \text{ weakly in }L^{2}\left( T,T+1;H^{1}\left( \Omega
\right) \right).  \tag{5.11}
\end{equation}%
By using the Gagliardo-Nirenberg inequality, we have the embedding%
\begin{equation*}
L^{\infty }\left( T,T+1;H^{1}\left( \Omega \right) \right) \cap L^{2}\left(
T,T+1;H^{3}\left( \Omega \right) \right) \hookrightarrow L^{8}\left(
T,T+1;L^{\infty }\left( \Omega \right) \right)  \text{ \ }\forall T\geq 0\text{,}
\end{equation*}%
which yields%
\begin{equation*}
\left\Vert \varphi _{n}\right\Vert _{L^{8}\left( T,T+1;L^{\infty }\left(
\Omega \right) \right) }\leq c\left( M\right) \text{  \ }\forall T\geq 0.
\end{equation*}%
Then, since $\rho<6$, we deduce 
\begin{align*}
&\overset{T+1}{\underset{T}{\int }}\underset{\Omega }{\int }\left( \Psi
^{\prime }\left( \varphi _{n}\right) -\Psi ^{\prime }\left( \varphi
_{m}\right) \right) ^{2}dxdt\\
&\leq c_{1}\overset{T+1}{\underset{T}{\int }}%
\underset{\Omega }{\int }\left\vert \varphi _{n}-\varphi _{m}\right\vert
^{2}\left( 1+\left\vert \varphi _{n}\right\vert ^{2\left( \rho-2\right)
}+\left\vert \varphi _{m}\right\vert ^{2\left( \rho-2\right) }\right) dxdt\\
&\leq c_{2}\overset{T+1}{\underset{T}{\int }}\left( 1+\left\Vert \varphi
_{n}\right\Vert _{L^{\infty }\left( \Omega \right)}^{2\left( \rho-2\right) }+\left\Vert \varphi
_{m}\right\Vert _{L^{\infty }\left( \Omega \right)}^{2\left( \rho-2\right) }\right) \left\Vert
\varphi _{n}-\varphi _{m}\right\Vert _{L^{2}\left( \Omega \right)}^{2}dt\\
&\leq c_{3}\left\Vert \varphi _{n}-\varphi _{m}\right\Vert _{C\left(
	0,T;L^{2}\left( \Omega \right) \right) }^{2}\overset{T+1}{\underset{T}{\int 
}}\left( 1+\left\Vert \varphi _{n}\right\Vert _{L^{\infty }\left( \Omega
	\right) }^{2\left( \rho-2\right) }+\left\Vert \varphi _{m}\right\Vert
_{L^{\infty }\left( \Omega \right) }^{2\left( \rho-2\right) }\right) dt\\
&\leq c_{3}\left\Vert \varphi _{n}-\varphi _{m}\right\Vert _{C\left(
	0,T;L^{2}\left( \Omega \right) \right) }^{2}\text{ \ }\forall T\geq 0. 
\tag{5.12}
\end{align*}%
Taking into account (5.5)$_{1}$ in (5.12), we have%
\begin{equation*}
\Psi ^{\prime }\left( \varphi _{m}\right) \rightarrow \Psi ^{\prime }\left(
\varphi \right) \text{ strongly in }L^{2}\left( T,T+1;L^{2}\left( \Omega
\right) \right) \text{ \ }\forall T\geq 0,
\end{equation*}%
which together with (5.5)$_{2}$ and (5.5)$_{3}$ yields 
\begin{equation}
\mu _{m}\rightarrow \mu \text{ strongly in }L^{2}\left( T,T+1;L^{2}\left(
\Omega \right) \right) \text{ \ }\forall T\geq 0.  \tag{5.13}
\end{equation}%
\textbf{Step 2: }Now, from the problem (1.1)-(1.3), we have%
\begin{align*}
&\left( \varphi _{n}-\varphi _{m}\right) _{t}-\Delta \left( \mu _{n}-\mu _{m}\right)\\
&=\left( p\left( \varphi _{n}\right)
-p\left( \varphi _{m}\right) \right) \left( N_{\sigma _{n}}-\mu _{n}\right)
+p\left( \varphi _{m}\right) \left( N_{\sigma _{n}}-N_{\sigma _{m}}\right)
-p\left( \varphi _{m}\right) \left( \mu _{n}-\mu _{m}\right) ,  \tag{5.14}\\
&\mu _{n}-\mu _{m}=-\Delta \left( \varphi _{n}-\varphi _{m}\right) +\Psi
^{\prime }\left( \varphi _{n}\right) -\Psi ^{\prime }\left( \varphi
_{m}\right) -\chi _{\varphi }\left( \sigma _{n}-\sigma _{m}\right),\tag{5.15}\\
&\left( \sigma _{n}-\sigma _{m}\right) _{t}-\Delta \left( N_{\sigma _{n}}-N_{\sigma _{m}}\right)\\
&=-\left( p\left( \varphi
_{n}\right) -p\left( \varphi _{m}\right) \right) \left( N_{\sigma _{n}}-\mu
_{n}\right) -p\left( \varphi _{m}\right) \left( N_{\sigma _{n}}-N_{\sigma
	_{m}}\right) +p\left( \varphi _{m}\right) \left( \mu _{n}-\mu _{m}\right) . 
\tag{5.16}
\end{align*}%
Testing (5.14) with $\left( \mu _{n}-\mu _{m}\right) $, we obtain%
\begin{align*}
&\left\langle \left\langle \left( \varphi _{n}-\varphi _{m}\right) _{t},\left(
\mu _{n}-\mu _{m}\right)\right\rangle \right\rangle  +\left\Vert \nabla \left( \mu _{n}-\mu
_{m}\right) \right\Vert _{L^{2}\left( \Omega\right) }^{2}+\underset{\Omega }{\int }p\left(
\varphi _{m}\right) \left( \mu _{n}-\mu _{m}\right) ^{2}dx\\
&-\underset{\Omega }{\int }p\left( \varphi _{m}\right) \left( N_{\sigma
	_{n}}-N_{\sigma _{m}}\right) \left( \mu _{n}-\mu _{m}\right) dx=\underset{%
	\Omega }{\int }\left( p\left( \varphi _{n}\right) -p\left( \varphi
_{m}\right) \right) \left( N_{\sigma _{n}}-\mu _{n}\right) \left( \mu
_{n}-\mu _{m}\right) dx.  \tag{5.17}
\end{align*}%
Testing (5.15) with $\left( \varphi _{n}-\varphi _{m}\right) _{t}$, it follows that%
\begin{align*}
&-\left\langle \left\langle \left( \varphi _{n}-\varphi _{m}\right) _{t},\left(
\mu _{n}-\mu _{m}\right)\right\rangle \right\rangle +\frac{1}{2}\frac{d}{dt}\left\Vert
\nabla \left( \varphi _{n}-\varphi _{m}\right) \right\Vert _{L^{2}\left( \Omega\right) }^{2}\\
&+\underset{\Omega }{\int }\left( \Psi ^{\prime }\left( \varphi _{n}\right)
-\Psi ^{\prime }\left( \varphi _{m}\right) \right) \left( \varphi
_{n}-\varphi _{m}\right) _{t}dx-\underset{\Omega }{\int }\chi _{\varphi
}\left( \sigma _{n}-\sigma _{m}\right) \left( \varphi _{n}-\varphi
_{m}\right) _{t}dx=0.  \tag{5.18}
\end{align*}
Then, testing (5.16) with $\left( N_{\sigma _{n}}-N_{\sigma _{m}}\right) 
$, we infer%
\begin{align*}
&\frac{\chi _{\sigma }}{2}\frac{d}{dt}\left\Vert \sigma _{n}-\sigma
_{m}\right\Vert _{L^{2}}^{2}-\underset{\Omega }{\int }\chi _{\varphi
}\left( \varphi _{n}-\varphi _{m}\right) \left( \sigma _{n}-\sigma
_{m}\right) _{t}dx+\left\Vert \nabla \left( N_{\sigma _{n}}-N_{\sigma
_{m}}\right) \right\Vert _{L^{2}}^{2}\\
&+\underset{\Omega }{\int }p\left( \varphi _{m}\right) \left( N_{\sigma
	_{n}}-N_{\sigma _{m}}\right) ^{2}dx-\underset{\Omega }{\int }p\left(
\varphi _{m}\right) \left( \mu _{n}-\mu _{m}\right) \left( N_{\sigma
	_{n}}-N_{\sigma _{m}}\right) dx\\
&=-\underset{\Omega }{\int }\left( p\left( \varphi _{n}\right) -p\left(
\varphi _{m}\right) \right) \left( N_{\sigma _{n}}-\mu _{n}\right) \left(
N_{\sigma _{n}}-N_{\sigma _{m}}\right) dx.  \tag{5.19}
\end{align*}
Summing (5.17)-(5.19), it follows that%
\begin{align*}
&\frac{d}{dt}\left( \frac{1}{2}\left\Vert \nabla \left( \varphi _{n}-\varphi
_{m}\right) \right\Vert _{L^{2}}^{2}+\frac{\chi _{\sigma }}{2}\left\Vert
\sigma _{n}-\sigma _{m}\right\Vert _{L^{2}}^{2}+\int \chi _{\varphi }\left(
\sigma _{n}-\sigma _{m}\right) \left( \varphi _{n}-\varphi _{m}\right)
\right)\\
&+\left\Vert \nabla \left( \mu _{n}-\mu _{m}\right) \right\Vert
_{L^{2}}^{2}+\left\Vert \nabla \left( N_{\sigma _{n}}-N_{\sigma _{m}}\right)
\right\Vert _{L^{2}}^{2}+\left\Vert \sqrt{p\left( \varphi _{m}\right) }%
\left( \left( N_{\sigma _{n}}-\mu _{n}\right) -\left( N_{\sigma _{m}}-\mu
_{m}\right) \right) \right\Vert _{L^{2}}^{2}\\
&\leq \underset{\Omega }{\int }\left( \Psi ^{\prime }\left( \varphi
_{m}\right) -\Psi \left( \varphi _{n}\right) \right) \left( \varphi
_{n}-\varphi _{m}\right) _{t}dx\\
&+ \underset{\Omega }{\int }\left\vert\left( p\left( \varphi _{n}\right)
-p\left( \varphi _{m}\right) \right) \left( N_{\sigma _{n}}-\mu _{n}\right)
\left( \left( \mu _{n}-\mu _{m}\right) +\left( N_{\sigma _{n}}-N_{\sigma
	_{m}}\right) \right)\right\vert dx.
\end{align*}
From the above equality, by using Young inequality, we obtain that 
\begin{align*}
&\frac{d}{dt}\left( \frac{1}{2}\left\Vert \nabla \left( \varphi _{n}-\varphi
_{m}\right) \right\Vert _{L^{2}\left( \Omega\right) }^{2}+\frac{\chi _{\sigma }}{2}\left\Vert
\sigma _{n}-\sigma _{m}\right\Vert _{L^{2}\left( \Omega\right) }^{2}+\underset{\Omega }{\int } \chi _{\varphi }\left(
\sigma _{n}-\sigma _{m}\right) \left( \varphi _{n}-\varphi _{m}\right)
\right)\\
&+c_{1}\left\Vert \nabla \left( \mu _{n}-\mu _{m}\right) \right\Vert
_{L^{2}\left( \Omega\right)}^{2}+c_{1}\left\Vert \nabla \left( N_{\sigma _{n}}-N_{\sigma
	_{m}}\right) \right\Vert _{L^{2}\left( \Omega\right)}^{2}\\
&\leq 2\left\Vert \left( p\left( \varphi _{n}\right) -p\left( \varphi
_{m}\right) \right) \left( N_{\sigma _{n}}-\mu _{n}\right) \right\Vert
_{\overset{\ast }{H^{1}}\left( \Omega \right)}^{2}+c_{2}\left\Vert  \mu _{n}-\mu _{m} \right\Vert
_{L^{2}\left( \Omega\right)}^{2}+c_{2}\left\Vert N_{\sigma _{n}}-N_{\sigma _{m}}\right\Vert
_{L^{2}\left( \Omega\right)}^{2}\\
&+\underset{\Omega }{\int }\left( \Psi ^{\prime }\left( \varphi _{m}\right)
-\Psi^{\prime } \left( \varphi _{n}\right) \right) \left( \varphi _{n}-\varphi
_{m}\right) _{t}dx.  \tag{5.20}
\end{align*}%
For the first term on the right hand side of (5.20), we have%
\begin{align*}
\left\Vert \left( p\left( \varphi _{n}\right) -p\left( \varphi _{m}\right)
\right) \left( N_{\sigma _{n}}-\mu _{n}\right) \right\Vert _{\overset{\ast }{H^{1}}\left( \Omega \right)}^{2}&\leq c_{3}\left\Vert p\left( \varphi _{n}\right) -p\left( \varphi
_{m}\right) \right\Vert _{L^{3/2}\left( \Omega \right) }^{2}\left\Vert
\left( N_{\sigma _{n}}-\mu _{n}\right) \right\Vert _{L^{6}\left( \Omega
	\right) }^{2}\\
&\leq c_{3}\left\Vert \varphi _{n}-\varphi _{m}\right\Vert _{L^{6}\left( \Omega \right)}^{2}\left(
\left\Vert N_{\sigma _{n}}\right\Vert _{L^{6}\left( \Omega \right)
}^{2}+\left\Vert \mu _{n}\right\Vert _{L^{6}\left( \Omega \right)
}^{2}\right).  \tag{5.21}
\end{align*}%
Considering (5.21) in (5.20), we infer%
\begin{align*}
&\frac{d}{dt}\left( \mathcal{E}\left( \varphi _{n}\left( t\right) -\varphi
_{m}\left( t\right) ,\sigma _{n}\left( t\right) -\sigma _{m}\left( t\right)
\right) \right)\\
&\leq c_{3}\left( \left\Vert N_{\sigma _{n}}\right\Vert _{L^{6}\left( \Omega
	\right) }^{2}+\left\Vert \mu _{n}\right\Vert _{L^{6}\left( \Omega \right)
}^{2}\right) \left\Vert \varphi _{n}-\varphi _{m}\right\Vert
_{L^{6}\left( \Omega \right)}^{2}+K^{n,m}\left( t\right) ,  \tag{5.22}
\end{align*}%
where $\mathcal{E}\left( \varphi ,\sigma \right) :=\frac{1}{2}\left\Vert
\nabla \varphi \right\Vert _{L^{2}\left( \Omega \right)}^{2}+\frac{\chi _{\sigma }}{2}\left\Vert
\sigma \right\Vert _{L^{2}\left( \Omega \right)}^{2}+\underset{\Omega }{\int } \chi _{\varphi }\sigma \varphi $ and $%
K^{n,m}\left( t\right) =c_{2}\left\Vert \mu _{n}-\mu _{m}
\right\Vert _{L^{2}\left( \Omega \right)}^{2}+c_{2}\left\Vert N_{\sigma _{n}}-N_{\sigma
_{m}}\right\Vert _{L^{2}\left( \Omega \right)}^{2}+\underset{\Omega }{\int }\left( \Psi ^{\prime
}\left( \varphi _{m}\right) -\Psi^{\prime
} \left( \varphi _{n}\right) \right) \left(
\varphi _{n}-\varphi _{m}\right) _{t}dx$.\newline
On the other hand, it can be easily obtained that%
\begin{equation*}
\left\Vert \varphi \right\Vert _{H^{1}\left( \Omega \right) }^{2}+\left\Vert
\sigma \right\Vert _{L^{2}\left( \Omega \right) }^{2}\leq K_{1}\mathcal{E}%
\left( \varphi ,\sigma \right) +K_{2}\left\Vert \varphi \right\Vert
_{L^{2}\left( \Omega \right) }^{2}
\end{equation*}%
for some constants $K_{1}$, $K_{2}>0$. Then, from (5.22), it follows that%
\begin{align*}
&\frac{d}{dt}\left( \mathcal{E}\left( \varphi _{n}\left( t\right) -\varphi
_{m}\left( t\right) ,\sigma _{n}\left( t\right) -\sigma _{m}\left( t\right)
\right) \right)\\
&\leq c_{4}\left( \left\Vert N_{\sigma _{n}}\right\Vert _{L^{6}\left( \Omega
	\right) }^{2}+\left\Vert \mu _{n}\right\Vert _{L^{6}\left( \Omega \right)
}^{2}\right) \mathcal{E}\left( \varphi _{n}\left( t\right) -\varphi
_{m}\left( t\right) ,\sigma _{n}\left( t\right) -\sigma _{m}\left( t\right)
\right) +\widetilde{K}^{n,m}\left( t\right) ,  \tag{5.23}
\end{align*}
where $\widetilde{K}^{n,m}\left( t\right) :=K^{n,m}\left( t\right)
+c_{5}\left( \left\Vert N_{\sigma _{n}}\right\Vert _{L^{6}\left( \Omega
\right) }^{2}+\left\Vert \mu _{n}\right\Vert _{L^{6}\left( \Omega \right)
}^{2}\right) \left\Vert \varphi _{n}-\varphi _{m}\right\Vert _{L^{2}.}^{2}$%
\newline
Now let us take $T\leq t\leq T+1$ and multiply (5.23) with $%
e^{-c_{4}\int\limits_{T}^{t}\left( \left\Vert N_{\sigma _{n}}\left( \tau
\right) \right\Vert _{L^{6}\left( \Omega \right) }^{2}+\left\Vert \mu
_{n}\right\Vert \left( \tau \right) _{L^{6}\left( \Omega \right)
}^{2}\right) d\tau }.$ Then, we obtain%
\begin{align*}
&\frac{d}{dt}\left( \mathcal{E}\left( \varphi _{n}\left( t\right) -\varphi
_{m}\left( t\right) ,\sigma _{n}\left( t\right) -\sigma _{m}\left( t\right)
\right) e^{-c_{4}\int\limits_{T}^{t}\left( \left\Vert N_{\sigma _{n}}\left(
\tau \right) \right\Vert _{L^{6}\left( \Omega \right) }^{2}+\left\Vert \mu
_{n}\right\Vert \left( \tau \right) _{L^{6}\left( \Omega \right)
}^{2}\right) d\tau }\right)\\
&\leq \widetilde{K}^{n,m}\left( t\right) e^{-c_{4}\int\limits_{T}^{t}\left(
	\left\Vert N_{\sigma _{n}}\left( \tau \right) \right\Vert _{L^{6}\left(
		\Omega \right) }^{2}+\left\Vert \mu _{n}\right\Vert \left( \tau \right)
	_{L^{6}\left( \Omega \right) }^{2}\right) d\tau }\leq \widetilde{K}%
^{n,m}\left( t\right) . \tag{5.24}
\end{align*}
Integrating (5.24) between $t$ and $T+1,$ by using the uniform boundedness
of $\left\{ N_{\sigma _{n}}\right\} _{n=1}^{\infty }$ and $\left\{ \mu
_{n}\right\} _{n=1}^{\infty }$ in $L^{2}\left( T,T+1\right) ;H^{1}\left(
\Omega \right) $, we infer%
\begin{align*}
&\mathcal{E}\left( \varphi _{n}\left( T+1\right) -\varphi _{m}\left(
T+1\right) ,\sigma _{n}\left( T+1\right) -\sigma _{m}\left( T+1\right)
\right)\\
&\leq \mathcal{E}\left( \varphi _{n}\left( t\right) -\varphi _{m}\left(
t\right) ,\sigma _{n}\left( t\right) -\sigma _{m}\left( t\right) \right)
e^{c_{4}\int\limits_{t}^{T+1}\left( \left\Vert N_{\sigma _{n}}\left( \tau
	\right) \right\Vert _{L^{6}\left( \Omega \right) }^{2}+\left\Vert \mu
	_{n}\right\Vert \left( \tau \right) _{L^{6}\left( \Omega \right)
	}^{2}\right) d\tau }\\
&+\int\limits_{t}^{T+1}\widetilde{K}^{n,m}\left( \tau \right) d\tau
e^{c_{4}\int\limits_{T}^{T+1}\left( \left\Vert N_{\sigma _{n}}\left( \tau
	\right) \right\Vert _{L^{6}\left( \Omega \right) }^{2}+\left\Vert \mu
	_{n}\right\Vert \left( \tau \right) _{L^{6}\left( \Omega \right)
	}^{2}\right) d\tau }\\
&\leq c_{6}\mathcal{E}\left( \varphi _{n}\left( t\right) -\varphi _{m}\left(
t\right) ,\sigma _{n}\left( t\right) -\sigma _{m}\left( t\right) \right)
+c_{6}\int\limits_{t}^{T+1}\widetilde{K}^{n,m}\left( \tau \right) d\tau .
\end{align*}
Finally, integrating the last inequality with respect to $t$ between $T$ and 
$T+1$, we deduce%
\begin{align*}
&\mathcal{E}\left( \varphi _{n}\left( T+1\right) -\varphi _{m}\left(
T+1\right) ,\sigma _{n}\left( T+1\right) -\sigma _{m}\left( T+1\right)
\right)\\
&\leq c_{6}\int\limits_{T}^{T+1}\mathcal{E}\left( \varphi _{n}\left( t\right)
-\varphi _{m}\left( t\right) ,\sigma _{n}\left( t\right) -\sigma _{m}\left(
t\right) \right) dt+c_{6}\int\limits_{T}^{T+1}\int\limits_{t}^{T+1}\widetilde{K%
}^{n,m}\left( \tau \right) d\tau dt. \tag{5.25}
\end{align*}
Recalling (5.5)$_{2}$ and (5.5)$_{3}$, it follows that%
\begin{equation}
\underset{n\rightarrow \infty }{\lim \sup }\text{ }\underset{m\rightarrow \infty }{%
\lim \sup }\underset{T}{\overset{T+1}{\int }}\mathcal{E}\left( \varphi
_{n}\left( t\right) -\varphi _{m}\left( t\right) ,\sigma _{n}\left( t\right)
-\sigma _{m}\left( t\right) \right) dt=0  \tag{5.26}
\end{equation}%
Now, let us estimate the last term of $\widetilde{K}^{n,m}\left( t\right) $.
Considering (5.4)$_{2}$ and (5.11), we have 
\begin{align*}
&\underset{n\rightarrow \infty }{\lim \sup }\text{ }\underset{m\rightarrow
\infty }{\lim \sup }\int\limits_{T}^{T+1}\int\limits_{t}^{T+1}\int\limits_{%
\Omega }\left( \Psi ^{\prime }\left( \varphi _{m}\right) -\Psi ^{\prime
}\left( \varphi _{n}\right) \right) \left( \varphi _{n}-\varphi _{m}\right)
_{t}dxd\tau dt\\
&=\underset{n\rightarrow \infty }{\lim \sup }\text{ }\underset{m\rightarrow
	\infty }{\lim \sup }\int\limits_{T}^{T+1}\int\limits_{t}^{T+1}\int\limits_{%
	\Omega }\left( -\frac{d}{dt}\left( \Psi \left( \varphi _{n}\right) \right) -%
\frac{d}{dt}\left( \Psi \left( \varphi _{m}\right) \right) +\Psi ^{\prime
}\left( \varphi _{n}\right) \varphi _{mt}+\Psi ^{\prime }\left( \varphi
_{m}\right) \varphi _{nt}\right) dxd\tau dt\\
&=\underset{n\rightarrow \infty }{\lim \sup }\text{ }\underset{m\rightarrow
	\infty }{\lim \sup }\int\limits_{T}^{T+1}\int\limits_{t}^{T+1}\int\limits_{%
	\Omega }\left( -\frac{d}{dt}\left( \Psi \left( \varphi _{n}\right) \right) -%
\frac{d}{dt}\left( \Psi \left( \varphi _{m}\right) \right) +2\frac{d}{dt}%
\left( \Psi \left( \varphi \right) \right) \right) dxd\tau dt\\
&\leq \underset{n\rightarrow \infty }{\lim \sup }\text{ }\underset{%
	m\rightarrow \infty }{\lim \sup }\int\limits_{\Omega }\left( -\Psi \left(
\varphi _{n}\left( T+1\right) \right) -\Psi \left( \varphi _{m}\left(
T+1\right) \right) +2\Psi \left( \varphi \left( T+1\right) \right) \right) dx\\
&+\underset{n\rightarrow \infty }{\lim \sup }\text{ }\underset{m\rightarrow
	\infty }{\lim \sup }\int\limits_{T}^{T+1}\int\limits_{\Omega }\left( 
\Psi \left( \varphi _{n}\left( t\right) \right) +\Psi \left( \varphi
_{m}\left( t\right) \right) -2\Psi \left( \varphi \left( t\right) \right)
\right)  dxdt.
\end{align*}%
Then taking into account (5.9) and (5.10) in the above estimate, we deduce%
\begin{equation}
\underset{n\rightarrow \infty }{\lim \sup }\text{ }\underset{m\rightarrow
	\infty }{\lim \sup }\int\limits_{T}^{T+1}\int\limits_{t}^{T+1}\int\limits_{\Omega
}\left( \Psi ^{\prime }\left( \varphi _{m}\right) -\Psi ^{\prime }\left(
\varphi _{n}\right) \right) \left( \varphi _{n}-\varphi _{m}\right)
_{t}dxd\tau dt=0\text{ \  }\forall T\geq 0\text{.}  \tag{5.27}
\end{equation}%
and considering (5.5)$_{1}$, (5.6), (5.13) and (5.27), we get
\begin{equation}
\underset{n\rightarrow \infty }{\lim \sup }\text{ }\underset{m\rightarrow \infty }{%
\lim \sup }\int\limits_{T}^{T+1}\int\limits_{t}^{T+1}\widetilde{K}%
^{n,m}\left( t\right) dt=0.  \tag{5.28}
\end{equation}%
Considering (5.26), (5.28) in (5.25), we have
\begin{equation*}
\underset{n\rightarrow \infty }{\lim \sup }\text{ }\underset{m\rightarrow \infty }{%
\lim \sup } \ \ \mathcal{E}\left( \varphi _{n}\left( T+1\right) -\varphi
_{m}\left( T+1\right) ,\sigma _{n}\left( T+1\right) -\sigma _{m}\left(
T+1\right) \right) =0\text{ \ }\forall T\geq 0
\end{equation*}%
which \ yields%
\begin{equation*}
\underset{n\rightarrow \infty }{\lim \sup }\text{ }\underset{m\rightarrow \infty }{%
\lim \sup }\left( \left\Vert S\left( T+1+t_{k_{n}}-T_{0}\right) \left( \xi
_{k_{n}},\eta _{k_{n}}\right) -S\left( T+1+t_{k_{m}}-T_{0}\right) \left( \xi
_{k_{m}},\eta _{k_{m}}\right) \right\Vert _{H^{1}\left( \Omega \right)
\times L^{2}\left( \Omega \right) }^{2}\right) =0
\end{equation*}%
for all $T\geq 0$. Choosing $T=T_{0}-1$ in the last inequality, we deduce%
\begin{equation*}
\underset{n\rightarrow \infty }{\lim \sup }\text{ }\underset{m\rightarrow \infty }{%
\lim \sup }\left\Vert S\left( t_{k_{n}}\right) \left( \xi _{k_{n}},\eta
_{k_{n}}\right) -S\left( t_{k_{m}}\right) \left( \xi _{k_{m}},\eta
_{k_{m}}\right) \right\Vert _{H^{1}\left( \Omega \right) \times L^{2}\left(
\Omega \right) }^{2}=0\text{,}
\end{equation*}%
and consequently%
\begin{equation*}
\underset{n\rightarrow \infty }{\lim \inf }\underset{m\rightarrow \infty }{%
\text{ }\lim \inf }\left\Vert S\left( t_{n}\right) \left( \xi _{n},\eta
_{n}\right) -S\left( t_{m}\right) \left( \xi _{m},\eta _{m}\right)
\right\Vert _{H^{1}\left( \Omega \right) \times L^{2}\left( \Omega \right)
}^{2}=0.
\end{equation*}%
The last equality, as shown in the proof of  \cite[Lemma 3.4]{khan2},
gives us the desired result.
\end{proof}
\subsection{Gradient System}
\begin{lemma}
Under conditions ($\Psi $) and ($P_{2}$), the dynamical system $\left(
H^{1}\left( \Omega \right) \times L^{2}\left( \Omega \right) ,S\left(
t\right) \right) \mathcal{\ }$\ is a gradient system, i.e. there exists a strict Lyapunov function for $\left(
H^{1}\left( \Omega \right) \times L^{2}\left( \Omega \right) ,S\left(
t\right) \right) $ on the whole phase space. 
\end{lemma}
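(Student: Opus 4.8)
The plan is to exhibit the energy functional of Theorem~3.1 as a strict Lyapunov function for $\left(H^{1}(\Omega)\times L^{2}(\Omega),S(t)\right)$. Concretely, I take
\begin{equation*}
E(\varphi,\sigma)=\frac{1}{2}\|\nabla\varphi\|_{L^{2}(\Omega)}^{2}+\int_{\Omega}\Psi(\varphi)\,dx+\frac{\chi_{\sigma}}{2}\|\sigma\|_{L^{2}(\Omega)}^{2}+\chi_{\varphi}\int_{\Omega}\sigma(1-\varphi)\,dx,
\end{equation*}
and I verify three properties: (a) $E$ is finite and continuous on $H^{1}(\Omega)\times L^{2}(\Omega)$; (b) $t\mapsto E\big(S(t)(\varphi_{0},\sigma_{0})\big)$ is non-increasing for every $(\varphi_{0},\sigma_{0})$; (c) if this map is constant on $[0,\infty)$, then $(\varphi_{0},\sigma_{0})$ is a stationary point. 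Throughout, $(M_{2})$ is understood to be in force, so that $S(t)$ is the uniquely defined semigroup generated by the weak solutions of Theorems~3.1 and 3.2.

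For (a), the terms $\tfrac12\|\nabla\varphi\|^{2}$, $\tfrac{\chi_\sigma}2\|\sigma\|^{2}$ and $\chi_\varphi\int_\Omega\sigma(1-\varphi)$ are manifestly continuous on $H^{1}(\Omega)\times L^{2}(\Omega)$, the last one because $\big|\int_\Omega\sigma(1-\varphi)\big|\le\|\sigma\|_{L^2}(|\Omega|^{1/2}+\|\varphi\|_{L^2})$. For $\int_\Omega\Psi(\varphi)$, integrating (3.4) twice and using $|\lambda''|\le\alpha$ gives $|\Psi(s)|\le c(1+|s|^{\rho})$ with $\rho\in[2,6)$; since in dimension three $H^{1}(\Omega)$ is continuously embedded in $L^{\rho}(\Omega)$, the Nemytskii operator $\varphi\mapsto\Psi(\varphi)$ is continuous from $H^{1}(\Omega)$ into $L^{1}(\Omega)$ (this is precisely the borderline growth for the target space $L^{1}$), whence $\varphi\mapsto\int_\Omega\Psi(\varphi)$ is finite and continuous. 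Therefore $E\in C\big(H^{1}(\Omega)\times L^{2}(\Omega)\big)$.

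For (b) and (c), I invoke the energy \emph{identity}, i.e. the ``$=$'' version of (3.19), which is available under $(M_{2})$ and $(P_{2})$ by Theorem~3.1 together with the regularity $\varphi_{t},\sigma_{t}\in L^{2}(0,T;(H^{1}(\Omega))^{\ast})$ that legitimises the choice $\eta=\mu$, $\xi=N_{\sigma}$ in (3.12)-(3.13):
\begin{equation*}
E(\varphi(t),\sigma(t))+\int_{0}^{t}\!\!\int_{\Omega}\Big(|\nabla\mu|^{2}+|\nabla N_{\sigma}|^{2}+p(\varphi)(N_{\sigma}-\mu)^{2}\Big)\,dx\,d\tau=E(\varphi_{0},\sigma_{0}).
\end{equation*}
Since $p>0$ by $(P_{2})$, the dissipation integral is non-negative, which gives (b) immediately. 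If $E(S(t)(\varphi_{0},\sigma_{0}))=E(\varphi_{0},\sigma_{0})$ for every $t\ge0$, the identity forces $\nabla\mu\equiv0$, $\nabla N_{\sigma}\equiv0$ and $p(\varphi)(N_{\sigma}-\mu)^{2}\equiv0$, hence (using $p>0$) $\mu$ and $N_{\sigma}$ are spatially constant and $N_{\sigma}=\mu$ a.e. Feeding this into the weak formulation of (1.1) and (1.3) with $m=n=1$ (so that $\varphi_{t}=\Delta\mu+p(\varphi)(N_{\sigma}-\mu)$ and $\sigma_{t}=\Delta N_{\sigma}-p(\varphi)(N_{\sigma}-\mu)$) yields $\varphi_{t}\equiv0$ and $\sigma_{t}\equiv0$; thus $S(t)(\varphi_{0},\sigma_{0})=(\varphi_{0},\sigma_{0})$ for all $t$, i.e. $(\varphi_{0},\sigma_{0})$ is a stationary point. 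Hence $E$ is a strict Lyapunov function and the system is gradient.

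The step I expect to require the most care is not (b), which is essentially the energy balance already established, but the rigorous passage in (c) from the vanishing of the dissipation to $\varphi_{t}=\sigma_{t}=0$: one must test (3.12)-(3.13) with general $\eta,\xi\in H^{1}(\Omega)$ and use $\nabla\mu=\nabla N_{\sigma}=0$ together with $N_{\sigma}=\mu$ a.e., checking that these identities hold in a sense strong enough to make this meaningful — here the regularity of weak solutions (in particular $\mu\in L^{2}(0,T;H^{1}(\Omega))$ from (3.10) and the bounds (4.8)-(4.12)) makes everything legitimate. A secondary technical point is confirming that the trajectory selected by the semigroup satisfies the energy identity with equality, not merely the inequality (3.19); this is exactly what Theorem~3.1 guarantees under $(M_{2})$ and $(P_{2})$.
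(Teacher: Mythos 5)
Your proposal is correct and follows essentially the same route as the paper: take $E$ as in Theorem~3.1, use the energy identity (the ``$=$'' version of (3.19), valid under $(M_{2})$, $(P_{2})$) to get monotonicity, and, when $E$ is constant along a trajectory, conclude $\nabla\mu=\nabla N_{\sigma}=0$ and $p(\varphi)(N_{\sigma}-\mu)=0$, hence $\varphi_{t}=\sigma_{t}=0$ and the trajectory is stationary. Your additional verification of the continuity of $E$ on $H^{1}(\Omega)\times L^{2}(\Omega)$ is a harmless extra the paper does not spell out.
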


\begin{proof}
We consider the energy functional%
\begin{equation*}
E\left( \varphi ,\sigma \right) :=\frac{1}{2}\left\Vert \nabla \varphi
\right\Vert _{L_{2}\left( \Omega \right) }^{2}+\underset{\Omega }{\int }%
\Psi \left( \varphi \right) +\frac{\chi _{\sigma }}{2}\left\Vert \sigma
\right\Vert _{L_{2}\left( \Omega \right) }^{2}+\underset{\Omega }{\int }%
\chi _{\varphi }\sigma \left( 1-\varphi \right) .
\end{equation*}%
Then from (3.12), we obtain that%
\begin{align*}
&E\left( \varphi \left( t\right) ,\sigma \left( t\right) \right)
+\int\limits_{s}^{t}\left( \left\Vert \nabla \mu\left( \tau\right)  \right\Vert _{L_{2}\left(
\Omega \right) }^{2}+\left\Vert \nabla N_{\sigma }\left( \tau\right)\right\Vert _{L_{2}\left(
\Omega \right) }^{2}\right)d\tau \\
&+\int\limits_{s}^{t}\int\limits_{\Omega
}p\left( \varphi\left( \tau,x\right)  \right) \left( N_{\sigma }\left( \tau,x\right)-\mu\left( \tau,x\right) \right) ^{2}dxd\tau=E\left( \varphi
\left( s\right) ,\sigma \left( s\right) \right) \text{ \ \ }\forall t\geq s%
\text{.}  \tag{5.29}
\end{align*}%
From (5.29), it readily follows that $E\left( \varphi \left( t\right)
,\sigma \left( t\right) \right) $ is a nonincreasing function with respect
to $t.$ So, $E\left( \varphi \left( t\right) ,\sigma \left( t\right) \right) 
$ is a Lyapunov function for $\left( H^{1}\left( \Omega \right) \times
L^{2}\left( \Omega \right) ,S\left( t\right) \right) $. \newline
Now, we will show that $E\left( \varphi \left( t\right) ,\sigma \left(
t\right) \right) $ is a strict Lyapunov function. Assume that for $\left(
\varphi \left( t\right) ,\sigma \left( t\right) \right) =S\left( t\right)
\left( \varphi _{0},\sigma _{0}\right) $, the following equality holds:%
\begin{equation*}
E\left( \varphi \left( t\right) ,\sigma \left( t\right) \right) =E\left(
\varphi _{0},\sigma _{0}\right) \text{ \ }\forall t\geq 0\text{.}
\end{equation*}%
Then, from (5.29) we have%
\begin{equation}
\left\{ 
\begin{array}{r}
\nabla \mu \left( t,\cdot \right) =0\text{ \ a.e. in }\Omega , \\ 
\nabla N_{\sigma }\left( t,\cdot \right) =0\text{ \ a.e. in }\Omega , \\ 
P\left( \varphi \right) \left( N_{\sigma }\left( t,\cdot \right) -\mu \left(
t,\cdot \right) \right) =0\text{ \ a.e. in }\Omega, %
\end{array}%
\right.  \tag{5.30}
\end{equation}
for $t\geq 0.$ Considering (5.30), in the problem (1.1)-(1.3), we obtain%
\begin{equation*}
\left\{ 
\begin{array}{c}
\varphi _{t}\left( t,\cdot \right) =0\text{ \ a.e. in }\Omega ,\text{ }%
\forall t\geq 0, \\ 
\sigma _{t}\left( t,\cdot \right) =0\text{ \ a.e. in }\Omega ,\text{ }%
\forall t\geq 0,%
\end{array}%
\right.
\end{equation*}%
which yields%
\begin{equation}
S\left( t\right) \left( \varphi _{0},\sigma _{0}\right) =\left( \varphi
_{0},\sigma _{0}\right) \text{ \ }\forall t\geq 0.  \tag{5.31}
\end{equation}%
Thus, the energy functional $E\left( \varphi ,\sigma \right) $ is a strict Lyapunov function on the dynamical system $\left( H^{1}\left( \Omega \right) \times
L^{2}\left( \Omega \right) ,S\left( t\right) \right) $.
\end{proof}

\subsection{Stationary points set}
In this part, we analyze the structure of the stationary points of the problem (1.1)-(1.4). Our aim is to show that the set of stationary points is nonempty and bounded. \newline
If we add (1.1) to (1.3) and test the obtained equation with $\xi =1$,
thanks to (1.4), we have%
\begin{equation*}
\underset{\Omega }{\int }\left( \varphi \left( t\right) +\sigma \left(
t\right) \right) dx=\underset{\Omega }{\int }\left( \varphi _{0}+\sigma
_{0}\right) dx\text{ \ }\forall t\geq 0\text{.}
\end{equation*}%
Now, for any arbitrary constant $M\in \mathbb{R}$, define the following subspace of $H^{1}\left( \Omega \right) \times
L^{2}\left( \Omega \right) $:%
\begin{equation}
Z_{M}:=\left\{ \left( \varphi ,\sigma \right) \in H^{1}\left( \Omega \right)
\times L^{2}\left( \Omega \right) :\text{ }\underset{\Omega }{\int }\left(
\varphi +\sigma \right) dx=\left\vert \Omega \right\vert M\right\} \text{.} 
\tag{5.32}
\end{equation}%
The set $Z_{M}$ is endowed with the usual norm in $H^{1}\left( \Omega \right) \times
L^{2}\left( \Omega \right) $, and it is a closed affine subspace of $H^{1}\left(
\Omega \right) \times L^{2}\left( \Omega \right) $. Then, for $\left(
\varphi _{0},\sigma _{0}\right) \in Z_{M}$, we can define a semigroup $%
\left\{ S_{M}\left( t\right) \right\} _{t\geq 0}$ of weakly continuous
operators on $Z_{M}$ where $\left\{ S_{M}\left( t\right) \right\} _{t\geq 0}$
is the restriction of $\left\{ S\left( t\right) \right\} _{t\geq 0}$ on $
Z_{M}.$\\ 
	Let us denote the set of stationary points of the problem
(1.1)-(1.4) in $Z_{M}$ as follows:

	\begin{align}
\mathcal{N}_{M}=\left\lbrace \left( \varphi ,\sigma \right) \in Z_{M}:S_{M}\left( t\right)\left( \varphi ,\sigma \right)=\left( \varphi ,\sigma \right),  \forall t\geq 0\right\rbrace.\nonumber  
\end{align}
If $ \left( \varphi ,\sigma \right) \in \mathcal{N}_{M}$, from (3.19) (with equality sign), it follows that%
\begin{equation*}
\int\limits_{s}^{t}\left( \left\Vert \nabla \mu \right\Vert _{L_{2}\left(
	\Omega \right) }^{2}+\left\Vert \nabla N_{\sigma }\right\Vert _{L_{2}\left(
	\Omega \right) }^{2}\right) d\tau +\int\limits_{s}^{t}\int\limits_{\Omega
}p\left( \varphi \right) \left( N_{\sigma }-\mu \right) ^{2}dxd\tau =0, 
\end{equation*}%
which yields%
\begin{equation*}
\left\{ 
\begin{array}{rlr}
\mu =&\mu ^{0}&\text{ a.e. in }\Omega , \\ 
N_{\sigma }=&N_{\sigma }^{0}&\text{ a.e. in }\Omega , \\ 
\sqrt{p\left( \varphi \right) }\left( N_{\sigma }-\mu \right) =&0&\text{ a.e.
	in }\Omega ,%
\end{array}%
\right.
\end{equation*}%
where $\mu ^{0}$ are $N_{\sigma }^{0}$ are given by the following identities:%
\begin{equation*}
\mu ^{0}=N_{\sigma }^{0}=\frac{1}{\left\vert \Omega \right\vert }%
\int\limits_{\Omega }\Psi ^{\prime }\left( \varphi \right) dx-\frac{\chi
	_{\varphi }}{\left\vert \Omega \right\vert }\int\limits_{\Omega }\sigma dx%
\text{.}\tag{5.33}
\end{equation*}%
Therefore, we obtain that $\mathcal{N}_{M}$ consists of solutions of the following stationary problem.%
\begin{equation}
\left\{ 
\begin{array}{rl}
-\Delta \varphi +\Psi ^{\prime }\left( \varphi \right) -\chi _{\varphi
}\sigma =&\mu ^{0}, \\ 
\chi _{\sigma }\sigma +\chi _{\varphi }\left( 1-\varphi \right) =&\mu ^{0},
\\ 
\underset{\Omega }{\int }\left( \varphi +\sigma \right) dx=&\left\vert
\Omega \right\vert M.%
\end{array}%
\right.  \tag{5.34}
\end{equation}

Hence, to prove that $\mathcal{N}_{M}$ is nonempty, it is enough to show that (5.34) has at least one solution.\\
 We start with the following lemma regarding the energy functional
\begin{equation*}
E\left( \varphi ,\sigma \right) :=\frac{1}{2}\left\Vert \nabla \varphi
\right\Vert _{L_{2}\left( \Omega \right) }^{2}+\underset{\Omega }{\int }%
\Psi \left( \varphi \right) +\frac{\chi _{\sigma }}{2}\left\Vert \sigma
\right\Vert _{L_{2}\left( \Omega \right) }^{2}+\underset{\Omega }{\int }%
\chi _{\varphi }\sigma \left( 1-\varphi \right) .
\end{equation*}%
\begin{lemma}
	Assume that the conditions ($\Psi $) and ($P_{2}$) are satisfied. Then, energy functional $ {E}\left( \varphi ,\sigma\right) $ has at least one minimizer $ \left( \varphi^{*} ,\sigma^{*}\right)\in Z_{M} $ such that
	\begin{equation*}
	{E}\left( \varphi^{*} ,\sigma^{*}\right)=\inf_{\left( \varphi ,\sigma\right)\in Z_{M}} {E}\left( \varphi ,\sigma\right).
	\end{equation*}
\end{lemma}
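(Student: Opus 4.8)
The plan is to use the direct method of the calculus of variations on $Z_{M}$.

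\textbf{Coercivity.} First I would observe that $E$ is finite on $Z_{M}$ (from ($\Psi$) one has $|\Psi(s)|\le C(1+|s|^{\rho})$ with $\rho<6$, and $H^{1}(\Omega)\hookrightarrow L^{6}(\Omega)$) and bounded below on $Z_{M}$. The only term of $E$ without a definite sign is the chemotaxis coupling $\chi_{\varphi}\int_{\Omega}\sigma(1-\varphi)\,dx$, which is precisely the quantity already estimated in the lines following (4.6): using $\Psi(\varphi)\ge R_{1}|\varphi|^{2}-R_{2}$ from (3.5) together with Young's inequality,
\begin{equation*}
\left|\chi_{\varphi}\int_{\Omega}\sigma(1-\varphi)\,dx\right|\le\frac{\chi_{\sigma}}{4}\|\sigma\|_{L^{2}(\Omega)}^{2}+\frac{2\chi_{\varphi}^{2}}{\chi_{\sigma}R_{1}}\int_{\Omega}\Psi(\varphi)\,dx+C.
\end{equation*}
Since $R_{1}>\tfrac{2\chi_{\varphi}^{2}}{\chi_{\sigma}}$, the constants $1-\tfrac{2\chi_{\varphi}^{2}}{\chi_{\sigma}R_{1}}$ and $\tfrac{\chi_{\sigma}}{4}$ are strictly positive, whence, using (3.5) once more,
\begin{equation*}
E(\varphi,\sigma)\ge c_{0}\left(\|\nabla\varphi\|_{L^{2}(\Omega)}^{2}+\int_{\Omega}\Psi(\varphi)\,dx+\|\sigma\|_{L^{2}(\Omega)}^{2}\right)-C_{0}\ge c_{1}\left(\|\varphi\|_{H^{1}(\Omega)}^{2}+\|\sigma\|_{L^{2}(\Omega)}^{2}\right)-C_{1}
\end{equation*}
for some $c_{0},c_{1}>0$. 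In particular $\ell:=\inf_{Z_{M}}E\in\mathbb{R}$.

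\textbf{Compactness of a minimizing sequence.} Let $(\varphi_{k},\sigma_{k})\subset Z_{M}$ satisfy $E(\varphi_{k},\sigma_{k})\to\ell$. By the previous estimate $\{(\varphi_{k},\sigma_{k})\}$ is bounded in $H^{1}(\Omega)\times L^{2}(\Omega)$, so along a subsequence $\varphi_{k}\rightharpoonup\varphi^{*}$ weakly in $H^{1}(\Omega)$ and $\sigma_{k}\rightharpoonup\sigma^{*}$ weakly in $L^{2}(\Omega)$; by the compact embedding $H^{1}(\Omega)\hookrightarrow L^{2}(\Omega)$ we may also assume $\varphi_{k}\to\varphi^{*}$ strongly in $L^{2}(\Omega)$ and a.e.\ in $\Omega$. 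Since $Z_{M}$ is a closed affine (hence convex) subspace of $H^{1}(\Omega)\times L^{2}(\Omega)$, it is weakly closed by Mazur's lemma, so $(\varphi^{*},\sigma^{*})\in Z_{M}$.

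\textbf{Lower semicontinuity and conclusion.} The terms $\tfrac12\|\nabla\varphi\|_{L^{2}(\Omega)}^{2}$ and $\tfrac{\chi_{\sigma}}{2}\|\sigma\|_{L^{2}(\Omega)}^{2}$ are convex and norm-continuous, hence sequentially weakly lower semicontinuous. For $\int_{\Omega}\Psi(\varphi)\,dx$ I would apply Fatou's lemma to the nonnegative functions $\Psi(\varphi_{k})+R_{2}$, using the a.e.\ convergence and continuity of $\Psi$, to get $\int_{\Omega}\Psi(\varphi^{*})\,dx\le\liminf_{k}\int_{\Omega}\Psi(\varphi_{k})\,dx$. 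Finally the coupling term in fact converges: $\int_{\Omega}\sigma_{k}\,dx\to\int_{\Omega}\sigma^{*}\,dx$ by weak $L^{2}$ convergence, and $\int_{\Omega}\sigma_{k}\varphi_{k}\,dx\to\int_{\Omega}\sigma^{*}\varphi^{*}\,dx$ by weak--strong convergence in $L^{2}(\Omega)$. Adding these up, $E(\varphi^{*},\sigma^{*})\le\liminf_{k}E(\varphi_{k},\sigma_{k})=\ell$; combined with $(\varphi^{*},\sigma^{*})\in Z_{M}$ this forces $E(\varphi^{*},\sigma^{*})=\ell$, which is the assertion. The only genuinely delicate point is the coercivity step: the indefinite chemotaxis coupling $\chi_{\varphi}\int\sigma(1-\varphi)$ must be absorbed into the positive part of $E$, which is exactly why the structural condition $R_{1}>\tfrac{2\chi_{\varphi}^{2}}{\chi_{\sigma}}$ is imposed in ($\Psi$); the remaining steps are routine.
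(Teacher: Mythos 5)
Your proposal is correct and follows essentially the same direct-method argument as the paper: coercivity by absorbing the indefinite chemotaxis coupling via Young's inequality and $R_{1}>\tfrac{2\chi_{\varphi}^{2}}{\chi_{\sigma}}$, weak compactness of a minimizing sequence, weak closedness of $Z_{M}$, convexity/weak lower semicontinuity of the quadratic terms, and weak--strong convergence of the coupling term. The only cosmetic difference is that you treat $\int_{\Omega}\Psi(\varphi_{k})\,dx$ by Fatou's lemma applied to $\Psi(\varphi_{k})+R_{2}\ge 0$ (a one-sided bound, which suffices), whereas the paper obtains the full limit via the generalized dominated convergence theorem using the strong $L^{\rho}(\Omega)$ convergence, $\rho<6$; both are valid.
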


\begin{proof}
	Using Young inequality, it is easy to see that ${E}$ is bounded from below in $Z_{M}$ by%
	\begin{equation}
	{E}\left( \varphi ,\sigma \right) \geq \frac{1}{2}%
	\int\limits_{\Omega }\left\vert \nabla \varphi \right\vert ^{2}dx+\left(
	R_{1}-\frac{2\chi _{\varphi }^{2}}{\chi _{\sigma }}\right)
	\int\limits_{\Omega }\left\vert \varphi \right\vert ^{2}dx+\frac{\chi
		_{\sigma }}{4}\int\limits_{\Omega }\left\vert \sigma \right\vert
	^{2}dx-R_{2}-\frac{2\chi _{\varphi }^{2}}{\chi _{\sigma }}\left\vert \Omega
	\right\vert  \tag{5.35}
	\end{equation}
	which means that due to $\Psi $ we have that ${E}$ has an infimum ${E}^{*}:=\underset%
	{\left( \varphi ,\sigma \right) \in Z_{M}}{\inf }{E}\left( \varphi,\sigma \right) $. Then, there exists a minimizing sequence $\left\{ \left(
	\varphi _{k},\sigma _{k}\right) \right\} _{k=1}^{\infty }\subset $ $Z_{M}$
	such that%
	\begin{equation*}
	 \lim_{k\rightarrow \infty }{E}\left( \varphi _{k},\sigma _{k}\right)={E}^{*}\text{ \ \ and \ \ }{E}\left( \varphi	_{k},\sigma _{k}\right) \leq {E}^{*}+1\text{, \ }\forall k\in 	\mathbb{N}
	\text{.} \tag{5.36}
	\end{equation*}
	Thanks to $ R_{1}>\frac{2\chi _{\varphi }^{2}}{\chi _{\sigma }} $, from (5.35) and (5.36), we obtain that $\left\{ \left( \varphi _{k},\sigma
	_{k}\right) \right\} _{k=1}^{\infty }$ is bounded in $H^{1}\left( \Omega
	\right) \times L^{2}\left( \Omega \right) $. Then, by the Banach-Alaoglu theorem
	there exists a subsequence $\left\{ \left( \varphi _{k_{m}},\sigma
	_{k_{m}}\right) \right\} _{m=1}^{\infty }$ such that%
	\begin{equation}
	\left\{ 
	\begin{array}{ll}
	\varphi _{k_{m}}\rightarrow {\varphi }^{*}&\text{ \ weakly in \ }%
	H^{1}\left( \Omega \right) , \\ 
	\sigma _{k_{m}}\rightarrow {\sigma }^{*}&\text{ \  weakly in \ }%
	L^{2}\left( \Omega \right) .%
	\end{array}%
	\right.  \tag{5.37}
	\end{equation}%
	From (5.37) and the compact embedding of $ H^{1}\left( \Omega \right) $ to $ L^{\rho}\left( \Omega \right) $, $ \rho<6 $, we also infer
	\begin{equation}
	\varphi _{k_{m}}\rightarrow {\varphi }^{*}\text{ strongly in }%
	L^{\rho}\left( \Omega \right) \text{ for all }\rho<6.  \tag{5.38}
	\end{equation}%
	Then, from (5.37)$_{2}$ and (5.38), it easily follows that%
	\begin{equation*}
	\lim_{m\rightarrow \infty }\int\limits_{\Omega }\chi _{\varphi }\left(
	1-\varphi _{k_{m}}\right) \sigma _{k_{m}}dx=\int\limits_{\Omega }\chi
	_{\varphi }\left( 1-\varphi \right) \sigma dx.
	\end{equation*}%
	Moreover, up to subsequence, (5.38) implies,%
	\begin{equation}
	\varphi _{k_{m}}\rightarrow {\varphi }^{*}\text{ \ \ a.e. in }\Omega . 
	\tag{5.39}
	\end{equation}
	Together with the continuity of $\Psi $, (5.39) yields%
	\begin{equation*}
	\Psi \left( \varphi _{k_{m}}\right) \rightarrow \Psi \left( {\varphi }^{*}\right) \text{ \ \ a.e. in }\Omega .
	\end{equation*}%
	Recalling the growth condition (3.4) on $\Psi $, we obtain that%
	\begin{equation*}
	\left\vert \Psi \left( \varphi _{k_{m}}\right) \right\vert \leq C\left(
	1+\left\vert \varphi _{k_{m}}\right\vert ^{\rho}\right)
	\end{equation*}%
	where $2\leq \rho<6$. Therefore, by using the generalized Lebesgue dominated
	convergence theorem, we deduce%
	\begin{equation*}
	\lim_{m\rightarrow \infty }\int\limits_{\Omega }\Psi \left( \varphi
	_{k_{m}}\right) =\int\limits_{\Omega }\Psi \left( {\varphi }^{*}
	\right) .
	\end{equation*}%
	On the other hand, the terms $\frac{1}{2}\int\limits_{\Omega }\left\vert \nabla \varphi
	\right\vert ^{2}dx$ and $\frac{\chi _{\sigma }}{2}\int\limits_{\Omega
	}\left\vert \sigma \right\vert ^{2}dx$ in the functional ${E}$
	are convex with respect to $\nabla \varphi $ and $\sigma $, so they are lower
	semicontinuous. Thus, we infer
	\begin{equation*}
	{E}\left( \varphi^{*} ,\sigma^{*}\right) \leq 
	\underset{m\rightarrow \infty }{\lim \inf }\text{ }{E}\left( \varphi
	_{k_{m}},\sigma _{k_{m}}\right) ={E}^{*}
	\end{equation*}%
	which proves that $\left( \varphi^{*} ,\sigma^{*}\right) $ is a minimizing point of ${E}$ on $Z_{M}.$
\end{proof}

\begin{lemma}
	Let the conditions ($\Psi $) and ($P_{2}$) hold and $\left( \varphi^{*} ,\sigma^{*}\right) $ be a minimizer of $	{E}\left( \varphi ,\sigma\right) $ in  $Z_{M}$. Then, $\left( \varphi^{*} ,\sigma^{*}\right) \in H^{2}\left( \Omega\right) \times H^{2}\left( \Omega\right)$ is a strong solution of the problem (5.33)-(5.34).
\end{lemma}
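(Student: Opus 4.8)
The plan is a calculus-of-variations argument: compute the first variation of $E$ at the minimizer $(\varphi^{*},\sigma^{*})$ over the affine subspace $Z_{M}$, read off the Euler--Lagrange system (which is exactly (5.34) with $\mu^{0}$ given by (5.33)), and then bootstrap the regularity of $\varphi^{*}$ up to $H^{2}(\Omega)$. First I would verify that $E$ is G\^{a}teaux differentiable at $(\varphi^{*},\sigma^{*})$ in every direction $(h,k)\in H^{1}(\Omega)\times L^{2}(\Omega)$, with
\[
\langle DE(\varphi^{*},\sigma^{*}),(h,k)\rangle=\int_{\Omega}\nabla\varphi^{*}\cdot\nabla h\,dx+\int_{\Omega}\Psi'(\varphi^{*})\,h\,dx+\chi_{\sigma}\int_{\Omega}\sigma^{*}k\,dx+\chi_{\varphi}\int_{\Omega}\left(k(1-\varphi^{*})-\sigma^{*}h\right)dx .
\]
The only nonroutine term is $\int_{\Omega}\Psi'(\varphi^{*})h$: from $\varphi^{*}\in H^{1}(\Omega)\hookrightarrow L^{6}(\Omega)$ and the growth bound (3.4), which gives $|\Psi'(s)|\le c(1+|s|^{\rho-1})$ with $\rho<6$, we obtain $\Psi'(\varphi^{*})\in L^{6/(\rho-1)}(\Omega)\subset L^{6/5}(\Omega)$, so this term is a bounded linear functional of $h\in H^{1}(\Omega)$; differentiability of $t\mapsto\int_{\Omega}\Psi(\varphi^{*}+th)\,dx$ at $t=0$ follows from the mean value theorem and dominated convergence, with dominating function $c(1+|\varphi^{*}|+|h|)^{\rho-1}|h|\in L^{1}(\Omega)$ (integrable because $\tfrac{\rho-1}{6}+\tfrac16\le1$).

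Since $Z_{M}$ is affine with tangent space $V=\{(h,k)\in H^{1}(\Omega)\times L^{2}(\Omega):\int_{\Omega}(h+k)\,dx=0\}$, minimality gives $\langle DE(\varphi^{*},\sigma^{*}),(h,k)\rangle=0$ for every $(h,k)\in V$. Taking $k=0$ and $h\in H^{1}(\Omega)$ with $\int_{\Omega}h\,dx=0$, the linear functional $h\mapsto\int_{\Omega}\nabla\varphi^{*}\cdot\nabla h+\int_{\Omega}(\Psi'(\varphi^{*})-\chi_{\varphi}\sigma^{*})h$ vanishes on the zero-mean hyperplane, hence equals $\mu^{0}\int_{\Omega}h$ for some constant $\mu^{0}\in\mathbb{R}$; this is the weak formulation of $-\Delta\varphi^{*}+\Psi'(\varphi^{*})-\chi_{\varphi}\sigma^{*}=\mu^{0}$ with Neumann condition $\partial_{\nu}\varphi^{*}=0$. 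Taking $h=0$ and $k\in L^{2}(\Omega)$ with $\int_{\Omega}k\,dx=0$ shows analogously that $\chi_{\sigma}\sigma^{*}+\chi_{\varphi}(1-\varphi^{*})$ equals a constant a.e. Testing with $(h,k)=(1,-1)\in V$ forces the two constants to coincide, so $N_{\sigma^{*}}=\mu^{0}$ a.e.; thus $(\varphi^{*},\sigma^{*})$ satisfies (5.34), and integrating the first equation over $\Omega$ (using $\partial_{\nu}\varphi^{*}=0$) yields the explicit value of $\mu^{0}=N_{\sigma}^{0}$ in (5.33).

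For the regularity, note first that $\chi_{\sigma}\sigma^{*}+\chi_{\varphi}(1-\varphi^{*})=\mu^{0}$ means $\sigma^{*}=\chi_{\sigma}^{-1}(\mu^{0}-\chi_{\varphi}(1-\varphi^{*}))$ is an affine function of $\varphi^{*}$, so it suffices to prove $\varphi^{*}\in H^{2}(\Omega)$, and then $\sigma^{*}\in H^{2}(\Omega)$ automatically. Rewriting the first equation as $-\Delta\varphi^{*}=\mu^{0}+\chi_{\varphi}\sigma^{*}-\Psi'(\varphi^{*})$, the first two terms lie in $L^{2}(\Omega)$ while $\Psi'(\varphi^{*})\in L^{6/(\rho-1)}(\Omega)$. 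If $\rho\le4$ then $6/(\rho-1)\ge2$, so $\Delta\varphi^{*}\in L^{2}(\Omega)$; elliptic regularity for the Neumann problem (the boundary condition being recovered by integration by parts in the weak formulation once $\varphi^{*}\in H^{2}$) gives $\varphi^{*}\in H^{2}(\Omega)\hookrightarrow L^{\infty}(\Omega)$. If $\rho\in(4,6)$, one iterates: $-\Delta\varphi^{*}\in L^{p_{0}}(\Omega)$ with $p_{0}=6/(\rho-1)$ gives $\varphi^{*}\in W^{2,p_{0}}(\Omega)\hookrightarrow L^{q_{0}}(\Omega)$ with $1/q_{0}=(\rho-5)/6$, which for $\rho<6$ is a strict improvement over $\varphi^{*}\in L^{6}(\Omega)$ and so raises the integrability exponent available for $\Psi'(\varphi^{*})$; a finite number of such iterations (the process closing precisely because $\rho<6$) yields $\varphi^{*}\in L^{\infty}(\Omega)$, whence $-\Delta\varphi^{*}\in L^{2}(\Omega)$ and $\varphi^{*}\in H^{2}(\Omega)$. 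In both cases $\sigma^{*}\in H^{2}(\Omega)$ as well, so $(\varphi^{*},\sigma^{*})$ is a strong solution of (5.33)--(5.34).

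The main obstacle is the regularity step when $\rho$ is close to $6$: one must check carefully that the Sobolev-embedding exponents genuinely improve at each iteration and that the bootstrap terminates after finitely many steps (this is exactly where the strict inequality $\rho<6$ is used), and one must pass rigorously from the weak identities to the pointwise a.e.\ equations and recover the Neumann condition. By contrast, the first-variation computation and the extraction of the Euler--Lagrange equations from the single affine constraint are routine under the standing assumptions $(\Psi)$ and $(P_{2})$.
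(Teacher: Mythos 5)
Your derivation of the Euler--Lagrange system is essentially the paper's: the paper also computes the first variation of $E$ over directions with $\int_{\Omega}(\eta+\xi)\,dx=0$ (its (5.40)--(5.41)), handles the affine constraint by subtracting the common mean $B$ from an arbitrary pair of test functions (its (5.42)), and then isolates the two equations and the constant $\mu^{0}$ by taking $\overline{\eta}=0$ resp.\ $\overline{\xi}=0$; your ``zero-mean directions plus the direction $(1,-1)$'' bookkeeping is just a different but equivalent way of organizing the same Lagrange-multiplier computation. Where you genuinely diverge is the regularity step. The paper does not bootstrap in $L^{p}$: it introduces the truncations $F_{N}$ of $\Psi_{0}'$, tests the equation $-\Delta\varphi^{*}+\Psi'(\varphi^{*})-\chi_{\varphi}\sigma^{*}=\mu^{0}$ with $F_{N}(\varphi^{*})$, uses $F_{N}'\geq 0$ and the monotonicity relation $\Psi_{0}'(\varphi^{*})F_{N}(\varphi^{*})\geq |F_{N}(\varphi^{*})|^{2}$ to get an $N$-independent bound $\|F_{N}(\varphi^{*})\|_{L^{2}}\leq C(\|\sigma^{*}\|_{L^{2}},\|\lambda'(\varphi^{*})\|_{L^{2}},\mu^{0})$, and concludes $\Psi_{0}'(\varphi^{*})\in L^{2}(\Omega)$ by Fatou; then a single application of $H^{2}$ elliptic theory finishes. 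Your route instead exploits the growth bound $|\Psi'(s)|\leq c(1+|s|^{\rho-1})$, $\rho<6$, together with $W^{2,p}$ elliptic regularity for the Neumann problem and a finite Sobolev bootstrap. Both are valid under the standing assumption $(\Psi)$; the paper's truncation argument is a one-shot estimate needing only $L^{2}$ elliptic theory and only the sign/monotonicity of $\Psi_{0}'$ at infinity (so it would survive faster-than-polynomial growth of $\Psi_{0}$), whereas your bootstrap leans on the polynomial bound and on $L^{p}$ (Agmon--Douglis--Nirenberg type) regularity, with the number of iterations degenerating as $\rho\to 6^{-}$; also note the borderline case $\rho=5$, where $W^{2,3/2}(\Omega)$ embeds into every $L^{q}(\Omega)$, $q<\infty$, but not into $L^{\infty}(\Omega)$, so one extra iteration is needed there rather than stopping at $1/q_{0}=(\rho-5)/6=0$. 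With that small caveat your argument is complete and reaches the same conclusion, including $\sigma^{*}\in H^{2}(\Omega)$ via the affine relation $\chi_{\sigma}\sigma^{*}=\mu^{0}-\chi_{\varphi}(1-\varphi^{*})$.
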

\begin{proof}
 Since $\left( \varphi ^{\ast },\sigma
^{\ast }\right)$ is a minimizer of ${E}$ in $Z_{M}$, we obtain for any $\left( \eta ,\xi \right) \in H^{1}\left(
\Omega \right) \times L^{2}\left( \Omega \right) $ such that $\underset{%
	\Omega }{\int }\left( \eta +\xi \right) dx=0$ that%
\begin{align*}
0=\left. \frac{d}{dt}\right\vert _{t=0}{E}\left( \varphi ^{\ast
}+t\eta ,\sigma ^{\ast }+t\xi \right) =\lim_{t\rightarrow 0}\frac{{E}%
	\left( \varphi ^{\ast }+t\eta ,\sigma ^{\ast }+t\xi \right) -{E}%
	\left( \varphi ^{\ast },\sigma ^{\ast }\right) }{t}.\tag{5.40}
\end{align*}%
Then, evaluating the limit in (5.40), for all $ \left( \eta ,\xi \right)\in Z_{0} $  we infer that%
\begin{equation*}
\int\limits_{\Omega }\left( \nabla \varphi ^{\ast }\cdot\nabla \eta +\Psi
^{\prime }\left( \varphi ^{\ast }\right) \eta +\chi _{\varphi }\eta \sigma
^{\ast } \right) dx+\int\limits_{\Omega }\left( \chi
_{\sigma }\sigma ^{\ast
}\xi +\chi _{\varphi }\xi \left( 1-\varphi ^{\ast }\right) 
\right) dx=0. \tag{5.41}
\end{equation*}%
Now, let us take $\left( \overline{\eta} ,\overline{\xi} \right) \in H^{1}\left(
\Omega \right) \times L^{2}\left( \Omega \right) $. Then, for $B=\frac{1}{2\left| \Omega\right|  }\int\limits_{\Omega } \left( \overline{\eta} +\overline{\xi} \right)dx $, consider
\begin{equation*}
 \eta=\overline{\eta}-B,\text{ \ }  \xi=\overline{\xi}-B
\end{equation*}
which is allowed as a test function in (5.41). Hence, we obtain
\begin{align}
&\int\limits_{\Omega }\left( \nabla \varphi ^{\ast }\cdot\nabla\overline{\eta} +\Psi
^{\prime }\left( \varphi ^{\ast }\right) \overline{\eta} +\chi _{\varphi }\overline{\eta} \sigma
^{\ast } \right) dx+\int\limits_{\Omega }\left( \chi
_{\sigma }\sigma ^{\ast
}\overline{\xi} +\chi _{\varphi }\overline{\xi} \left( 1-\varphi ^{\ast }\right) 
\right) dx\nonumber \\
&=\int\limits_{\Omega }B\left( \Psi
^{\prime }\left( \varphi ^{\ast }\right)+\chi _{\varphi }\sigma
^{\ast }+\chi
_{\sigma }\sigma
^{\ast }+\chi _{\varphi }\left( 1-\varphi ^{\ast }\right)\right)  dx .  \tag{5.42}
\end{align}
Choosing $\overline{\eta}=0 $ and $\overline{\xi} $ arbitrary in (5.42), we get
\begin{equation}
\frac{1}{\left\vert \Omega \right\vert }%
\int\limits_{\Omega }\Psi ^{\prime }\left( \varphi^{\ast } \right) dx-\frac{\chi
	_{\varphi }}{\left\vert \Omega \right\vert }\int\limits_{\Omega }\sigma^{\ast } dx=\chi _{\sigma }\sigma^{\ast } +\chi _{\varphi }\left( 1-\varphi^{\ast } \right).\tag{5.43}
\end{equation}
Hence, choosing $\overline{\xi}=0 $ and $\overline{\eta} $ arbitrary in (5.42), we obtain
\begin{equation*}
-\Delta \varphi^{\ast } +\Psi ^{\prime }\left( \varphi^{\ast } \right) -\chi _{\varphi
}\sigma^{\ast } =\mu ^{0},
\end{equation*}
where
\begin{equation*}
\mu ^{0}=\frac{1}{\left\vert \Omega \right\vert }%
\int\limits_{\Omega }\Psi ^{\prime }\left( \varphi^{\ast } \right) dx-\frac{\chi
	_{\varphi }}{\left\vert \Omega \right\vert }\int\limits_{\Omega }\sigma^{\ast } dx.
\end{equation*}
Moreover, it also follows from this and (5.43)
\begin{equation*} 
\chi _{\sigma }\sigma^{\ast } +\chi _{\varphi }\left( 1-\varphi^{\ast } \right) =\mu ^{0}.
\end{equation*}%
Therefore, the minimizing point of $E\left( \varphi ,\sigma \right) $ \
in $Z_{m}$, denoted by $\left( \varphi ^{\ast },\sigma ^{\ast }\right) $, is
a weak solution of the problem (5.33)-(5.34) in $Z_{m}$.\newline
Now, let us define the following function
\begin{equation*}
F_{N}\left( z\right) =\left\lbrace  
\begin{array}{ll}
\Psi_{0}^{\prime}\left(  z\right),  &\left| z\right| \leq N,\\
\Psi_{0}^{\prime}\left( N\right),   &z > N,\\
\Psi_{0}^{\prime}\left(-N\right),   &z < -N.
\end{array}
 \right.  
\end{equation*}
It is easy to see that, as $ N\rightarrow\infty $
\begin{equation}
F_{N}\left( \varphi\left( x\right) \right)\rightarrow\Psi_{0}^{\prime}\left(  \varphi\left( x\right) \right) \ \ a.e. \ in \ \Omega. \tag{5.44}
\end{equation}
Thus, replacing $ \left( \varphi,\sigma\right)  $ with $ \left( \varphi^{*},\sigma^{*}\right)  $ in the problem (5.33)-(5.34) and testing $ (5.34)_{1} $ with $ F_{N}\left( \varphi^{*}\right) $, we get
\begin{align*}
&\int\limits_{\Omega }\left| \nabla\varphi^{*}\right| ^{2}F_{N}^{\prime}\left( \varphi^{*}\right)dx+\int\limits_{\Omega }\Psi_{0}^{\prime}\left(  \varphi^{*}\right)F_{N}\left( \varphi^{*}\right)dx\\
& =-\int\limits_{\Omega }\lambda\left( \varphi^{*}\right)F_{N}\left( \varphi^{*}\right)dx+\chi_{\varphi^{*}}\int\limits_{\Omega }\sigma^{*} F_{N}\left( \varphi^{*}\right)dx+\mu_{0}\int\limits_{\Omega } F_{N}\left( \varphi^{*}\right)dx.
\end{align*}
Since $F_{N}^{\prime}\left( \varphi^{*}\right)\geq0 $ and $ F_{N}\left( \varphi^{*}\right)\leq\Psi_{0}^{\prime}\left(  \varphi^{*}\right) $, from the last equality we obtain
\begin{align*}
\int\limits_{\Omega }\left| F_{N} \left( \varphi^{*}\right)\right| ^{2}dx\leq C\left( \chi_{\varphi^{*}},\mu_{0}\right) \left( \left\| \sigma^{*}\right\| _{L^{2}\left( \Omega\right) }^{2}+\left\| \lambda\left( \varphi^{*}\right)\right\| _{L^{2}\left( \Omega\right) }^{2}+1\right) .
\end{align*}
Hence, considering (5.44) and applying Fatou's lemma in the last estimate we deduce
 $ \Psi_{0}^{\prime}\left(  \varphi^{*}\right) \in L^{2}\left( \Omega\right)  $. Hence, by elliptic regularity theory,  $\left( \varphi ^{\ast },\sigma
^{\ast }\right)\in H^{2}\left( \Omega\right) \times H^{2}\left( \Omega\right)$ is the strong solution of the problem (5.33)-(5.34).
\end{proof}
\begin{remark}
	In the case without chemotaxis stationary solutions $ \left( \varphi ^{\ast },\sigma^{\ast }\right) $ have the property that $ \sigma^{\ast } $ is constant (see \cite{cav}). In our case, $ \mathcal{N}_{M} $ consists of pairs $ \left( \varphi ^{\ast },\sigma^{\ast }\right) $ in which $ \sigma^{\ast } $ is not necessarily constant.
\end{remark}
\begin{lemma}
	Assume that the conditions ($\Psi $) and ($P_{2}$) are satisfied. Then, the set of stationary points $ \mathcal{N}_{M} $ is nonempty and bounded in $ Z_{M} $.
\end{lemma}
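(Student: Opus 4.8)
The plan is to read off both statements from the analysis of the stationary problem (5.33)--(5.34) already carried out. \emph{Nonemptiness} is almost immediate: by the previous two lemmas the energy $E$ attains its infimum over $Z_{M}$ at some $(\varphi^{*},\sigma^{*})\in Z_{M}$, and any such minimizer is a strong solution of (5.33)--(5.34). Taking $(\varphi^{*},\sigma^{*})$ as a constant-in-time pair, one checks exactly as in the computation preceding (5.34) -- since $\mu\equiv\mu^{0}$ is constant we have $\Delta\mu=0$, and $N_{\sigma}-\mu=0$ a.e. by $(5.34)_{2}$ -- that it solves the weak formulation (3.12)--(3.13) with $\varphi_{t}=\sigma_{t}=0$; by the uniqueness statement of Theorem 3.2 it is \emph{the} weak solution issuing from $(\varphi^{*},\sigma^{*})$, so $S_{M}(t)(\varphi^{*},\sigma^{*})=(\varphi^{*},\sigma^{*})$ for all $t\ge 0$ and $(\varphi^{*},\sigma^{*})\in\mathcal{N}_{M}$. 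Hence $\mathcal{N}_{M}\neq\emptyset$.

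For \emph{boundedness}, let $(\varphi,\sigma)\in\mathcal{N}_{M}$ be arbitrary, so (5.34) holds with $\mu^{0}$ as in (5.33). First I would use $(5.34)_{2}$ to eliminate $\sigma$, writing $\chi_{\sigma}\sigma=\mu^{0}-\chi_{\varphi}+\chi_{\varphi}\varphi$ pointwise; integrating this together with the mass constraint $(5.34)_{3}$ expresses the multiplier $\mu^{0}$ through the mean value only, namely $\mu^{0}=\chi_{\sigma}M+\chi_{\varphi}-(\chi_{\sigma}+\chi_{\varphi})\overline{\varphi}$. Next, testing $(5.34)_{1}$ with $\varphi$ and $(5.34)_{2}$ with $\sigma$ and adding, I would treat the chemotactic cross term exactly as in the derivation of the a priori bound (4.7), namely $\bigl|\int_{\Omega}\chi_{\varphi}\sigma(1-\varphi)\bigr|\le \tfrac{2\chi_{\varphi}^{2}}{\chi_{\sigma}}|\Omega|+\tfrac{\chi_{\sigma}}{4}\|\sigma\|_{L^{2}}^{2}+\tfrac{1}{R_{1}}\tfrac{2\chi_{\varphi}^{2}}{\chi_{\sigma}}\int_{\Omega}\Psi(\varphi)+C$. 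Since $R_{1}>\tfrac{2\chi_{\varphi}^{2}}{\chi_{\sigma}}$ and, by (3.4)--(3.5), both $\int_{\Omega}\Psi(\varphi)$ and (up to lower order terms) $\int_{\Omega}\Psi'(\varphi)\varphi$ are coercive in $\varphi$, these terms get absorbed, leaving an inequality of the form $\|\nabla\varphi\|_{L^{2}}^{2}+\|\varphi\|_{L^{2}}^{2}+\|\sigma\|_{L^{2}}^{2}\le C(1+|\mu^{0}|)$ with $C=C(M)$.

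It then remains to remove the dependence on $\mu^{0}$, equivalently to bound $\overline{\varphi}$, and this is the delicate point. Here I would combine the identity obtained by testing $(5.34)_{1}$ with the constant $1$ (this is precisely (5.33): $\int_{\Omega}\Psi'(\varphi)=|\Omega|\mu^{0}+\chi_{\varphi}\int_{\Omega}\sigma$) with the growth bound $|\Psi'(s)|\le C(1+|s|^{\rho-1})$, $2\le\rho<6$, the embedding $H^{1}(\Omega)\hookrightarrow L^{\rho}(\Omega)$, Poincar\'e--Wirtinger (to pass between $\|\varphi\|_{L^{2}}$ and $|\overline{\varphi}|+\|\nabla\varphi\|_{L^{2}}$), and the expression for $\mu^{0}$ obtained above; a Young inequality, exploiting $\rho-1<\rho$ so that the superquadratic part of the coercive lower bound on $\int_{\Omega}\Psi'(\varphi)\varphi$ dominates when $\rho>2$, and using (3.5) directly when $\rho=2$, lets one absorb $|\mu^{0}|$ back into the left-hand side, yielding a bound $\|(\varphi,\sigma)\|_{H^{1}(\Omega)\times L^{2}(\Omega)}\le C(M)$ uniform over $\mathcal{N}_{M}$. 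The main obstacle is exactly this self-consistency step: because only $\overline{\varphi+\sigma}=M$ is prescribed on $Z_{M}$ and not $\overline{\varphi}$ itself, $\mu^{0}$ is not a datum but must be controlled through the equation, and one has to make sure that the coercivity of $\Psi$ beats the chemotactic term $\tfrac{2\chi_{\varphi}^{2}}{\chi_{\sigma}}\|\varphi\|_{L^{2}}^{2}$ with strict margin -- which is where the hypothesis $R_{1}>\tfrac{2\chi_{\varphi}^{2}}{\chi_{\sigma}}$ (and, for $\rho>2$, the additional room coming from the $|s|^{\rho}$-growth of $\Psi$) is decisive.
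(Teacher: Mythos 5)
Your nonemptiness argument is fine and matches the paper's (Lemma 5.3 plus Lemma 5.4 produce a minimizer solving (5.33)--(5.34); your extra check via Theorem 3.2 that the constant-in-time pair is indeed the unique weak solution, hence a fixed point of $S_{M}(t)$, is a legitimate and even slightly more careful way to conclude $\mathcal{N}_{M}\neq\emptyset$). For boundedness your skeleton is also the paper's: test $(5.34)_{1}$ with $\varphi$ and $(5.34)_{2}$ with $\sigma$, add, use the mass constraint to eliminate $\mu^{0}$, and absorb the chemotactic cross term thanks to $R_{1}>\tfrac{2\chi_{\varphi}^{2}}{\chi_{\sigma}}$. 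Your explicit formula $\mu^{0}=\chi_{\sigma}M+\chi_{\varphi}-(\chi_{\sigma}+\chi_{\varphi})\overline{\varphi}$ (obtained by integrating $(5.34)_{2}$ and using $\overline{\varphi+\sigma}=M$) is correct and is a perfectly good substitute for the paper's use of (5.33); it even makes the removal of $\mu^{0}$ simpler than you suggest, since $|\mu^{0}|\leq C(1+\|\varphi\|_{L^{2}})$ is linear and is absorbed by any strictly quadratic left-hand side.

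The genuine gap is in the coercivity step. After testing, the left-hand side contains $\int_{\Omega}\Psi'(\varphi)\varphi\,dx$, not $\int_{\Omega}\Psi(\varphi)\,dx$, so you cannot ``treat the cross term exactly as in (4.7)'' nor ``use (3.5) directly when $\rho=2$'': assumption (3.5) is a lower bound on $\Psi$, and it gives no lower bound on $\Psi'(s)s$. For $\rho>2$ your claim is fine, because (3.4) yields $\Psi'(s)s\geq c|s|^{\rho}-C$, which dominates any multiple of $s^{2}$. But for $\rho=2$ all one gets pointwise from (3.4) and $|\lambda''|\leq\alpha$ is $\Psi'(s)s\geq(2c_{1}-\alpha)s^{2}-C|s|$, and nothing in the hypotheses forces $2c_{1}-\alpha$ to exceed $\tfrac{2\chi_{\varphi}^{2}}{\chi_{\sigma}}$ (it may even be negative), so the chemotactic term cannot be absorbed as you propose. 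This is exactly the point the paper handles with a separate argument: it chooses $K>M$ with $\Psi''>0$ on $\{|s|\geq K\}$ and proves the inequality $(s-M)\Psi'(s)\geq\Psi(s)-\Psi(0)-M\Psi'(0)$ there, which combined with (3.5) gives $(s-M)\Psi'(s)\geq R_{1}|s|^{2}-R_{3}$ for $|s|\geq K$ (inequality (5.47)); only through this convexity-based transfer does the constant $R_{1}>\tfrac{2\chi_{\varphi}^{2}}{\chi_{\sigma}}$ from (3.5) become available for the term $\int\Psi'(\varphi)(\varphi-M)$. Your sketch is missing this idea, so as written the case $\rho=2$ does not close; supplying an analogue of (5.47) (or restricting to $\rho>2$) is what is needed to complete your argument.
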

\begin{proof}
	From Lemma 5.3 and Lemma 5.4, we obtain that $ \mathcal{N}_{M} $ is nonempty. Now, testing $ (5.34)_{1} $ with $\varphi $, we get
\begin{equation} 
  \left\| \nabla\varphi\right\|_{L^{2}\left( \Omega\right) }  ^{2}+\int\limits_{\Omega }\Psi ^{\prime }\left( \varphi \right)\varphi dx=\mu ^{0}\int\limits_{\Omega }\varphi dx+\int\limits_{\Omega }{\chi}
  	_{\varphi }\sigma\varphi dx. \tag{5.45}
\end{equation}

Then, testing $ (5.34)_{2} $ with $\sigma $, we obtain
\begin{equation} 
{\chi}_{\sigma }\left\| \sigma\right\|_{L^{2}\left( \Omega\right)}  ^{2}=\mu ^{0}\int\limits_{\Omega }\sigma dx-\int\limits_{\Omega }{\chi}
_{\varphi }\sigma\left( 1-\varphi\right)  dx \tag{5.46}.
\end{equation}
From the assumptions (3.3) and (3.4) imposed on $ \Psi $, we can find a number $ K>M $ such that	$ \Psi^{\prime\prime }\left( s\right) $ is positive  for all $ \left| s\right| \geq K $. Then, it is easy to see that
\begin{equation*}
\left( s-M\right) \Psi^{\prime }\left( s\right)\geq\Psi\left( s\right)-\Psi\left( 0\right)-M\Psi^{\prime}\left( 0\right)\ \ \forall\left|  s\right| \geq K,
\end{equation*}
which together with (3.5), yields
\begin{equation}
\left( s-M\right) \Psi^{\prime }\left( s\right)\geq R_{1}\left| s\right| ^{2}-R_{3}  \ \  \forall \left| s\right| \geq K. \tag{5.47}
\end{equation}
Summing (5.45) and (5.46), we have
\begin{align*}
\left\| \nabla\varphi\right\|_{L^{2}\left( \Omega\right) }  ^{2}+{\chi}_{\sigma }\left\| \sigma\right\|_{L^{2}\left( \Omega\right)}  ^{2}+\int\limits_{\Omega}\Psi ^{\prime }\left( \varphi \right)\varphi dx=\mu^{0}\left| \Omega\right| M +2\int\limits_{\Omega}{\chi}
_{\varphi }\sigma\varphi dx-\int\limits_{\Omega}{\chi}
_{\varphi }\sigma dx.
\end{align*}
Using (5.33) in the the last inequality, we get
\begin{align*}
\left\| \nabla\varphi\right\|_{L^{2}\left( \Omega\right) }  ^{2}+{\chi}_{\sigma }\left\| \sigma\right\|_{L^{2}\left( \Omega\right)}  ^{2}+\int\limits_{\Omega}\Psi ^{\prime }\left( \varphi \right)\left(  \varphi-M\right)  dx=2\int\limits_{\Omega}{\chi}
_{\varphi }\sigma\varphi dx-\left( M+1\right) \int\limits_{\Omega}{\chi}
_{\varphi }\sigma dx
\end{align*}
which yields
\begin{align*}
\left\| \nabla\varphi\right\|_{L^{2}\left( \Omega\right) }  ^{2}+{\chi}_{\sigma }\left\| \sigma\right\|_{L^{2}\left( \Omega\right)}  ^{2}+\int\limits_{\left| \varphi\right| \geq K  }\Psi ^{\prime }\left( \varphi \right)\left( \varphi-M\right)  dx\nonumber\\
\leq C\left( M,K,{\chi}_{\varphi }\right) \left( 1+\int\limits_{\Omega }\left| \sigma\right|  dx\right) +2\int\limits_{\left| \varphi\right| \geq K  }{\chi}
_{\varphi }\left| \sigma\varphi\right|  dx
\end{align*} 
where $ C\left( M,K,{\chi}_{\varphi }\right) $ is a constant depending on $ M,K,{\chi}_{\varphi } $.
Hence, recalling $ R_{1}>\frac{2{\chi}_{\varphi }^{2}}{{\chi}_{\sigma }} $ and applying Young inequality in the last estimate, with the help of (5.47), we deduce
\begin{equation*}
\left\| \varphi\right\|_{H^{1}\left( \Omega\right) }  ^{2}+\left\| \sigma\right\|_{L^{2}\left( \Omega\right)}  ^{2}\leq
C\left( M,K,{\chi}_{\sigma },{\chi}_{\varphi }\right),
\end{equation*}
which completes the proof of lemma.
\end{proof}
\subsection{Results on the long-time behaviour }
\subsubsection{On the whole phase space $ H^{1}\left(\Omega \right) \times L^{2}\left(\Omega \right)$}
\begin{definition}
	Let $X$ be a Banach space. The $\omega -limit$ set of a set $A\subset X$ is defined as%
\begin{equation*}
	\omega \left( A\right) =\underset{t\geq 0}{\cap }\overline{\underset{\tau
			\geq t}{\cup }S\left( \tau \right) A}.
\end{equation*}
\end{definition}
Then, from Lemma 5.1, we get the following result (see \cite{chueshov}).
\begin{lemma}
		Let the conditions ($\Psi $) and ($P_{2}$) hold and $ B $ be a bounded set in $ H^{1}\left( \Omega \right) \times
	L^{2}\left( \Omega \right) $, then $  \omega\left( B\right)$ is nonempty, compact and invariant.
\end{lemma}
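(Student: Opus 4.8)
The plan is to deduce the statement from the asymptotic compactness of Lemma~5.1 together with the weak continuity of the semigroup $\{S(t)\}_{t\ge0}$, in the by-now classical manner (see \cite{chueshov}). Write $\mathcal{H}:=H^{1}(\Omega)\times L^{2}(\Omega)$ and recall that, by the definition of the $\omega$-limit set, a point $y$ belongs to $\omega(B)$ if and only if there are sequences $(\xi_{k},\eta_{k})\in B$ and $t_{k}\to\infty$ with $S(t_{k})(\xi_{k},\eta_{k})\to y$ in $\mathcal{H}$; this equivalence follows from the definition by a standard diagonal argument. \textbf{Nonemptiness.} Choosing any $(\xi_{k},\eta_{k})\in B$ and any $t_{k}\to\infty$, Lemma~5.1 provides a subsequence of $\{S(t_{k})(\xi_{k},\eta_{k})\}$ that converges in $\mathcal{H}$; its limit lies in $\omega(B)$ by the characterisation above, so $\omega(B)\neq\emptyset$.

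\textbf{Compactness.} Let $\{y_{j}\}_{j\ge1}\subset\omega(B)$. For each $j$ pick $(\xi_{j},\eta_{j})\in B$ and $t_{j}\ge j$ with $\|S(t_{j})(\xi_{j},\eta_{j})-y_{j}\|_{\mathcal{H}}<1/j$. Since $t_{j}\to\infty$ and $B$ is bounded, Lemma~5.1 yields a subsequence with $S(t_{j_{l}})(\xi_{j_{l}},\eta_{j_{l}})\to y$ in $\mathcal{H}$, and $y\in\omega(B)$. Then $\|y_{j_{l}}-y\|_{\mathcal{H}}\le 1/j_{l}+\|S(t_{j_{l}})(\xi_{j_{l}},\eta_{j_{l}})-y\|_{\mathcal{H}}\to0$. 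Hence every sequence in $\omega(B)$ has a subsequence converging in $\mathcal{H}$ to a point of $\omega(B)$, so $\omega(B)$ is compact, being a sequentially compact subset of a metric space.

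\textbf{Invariance.} Fix $t\ge0$; the case $t=0$ being trivial, assume $t>0$. Let $y\in\omega(B)$ and pick $(\xi_{k},\eta_{k})\in B$, $t_{k}\to\infty$ with $S(t_{k})(\xi_{k},\eta_{k})\to y$. For the inclusion $S(t)\omega(B)\subset\omega(B)$: the sequence $S(t+t_{k})(\xi_{k},\eta_{k})$ has, by Lemma~5.1, a subsequence converging strongly in $\mathcal{H}$ to some $z\in\omega(B)$; on the other hand, by weak continuity of $S(t)$, $S(t+t_{k})(\xi_{k},\eta_{k})=S(t)S(t_{k})(\xi_{k},\eta_{k})\rightharpoonup S(t)y$, and uniqueness of the limit gives $S(t)y=z\in\omega(B)$. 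For the reverse inclusion $\omega(B)\subset S(t)\omega(B)$: for $k$ large one has $t_{k}>t$, and the sequence $S(t_{k}-t)(\xi_{k},\eta_{k})$ admits, by Lemma~5.1, a subsequence converging strongly in $\mathcal{H}$ to some $w\in\omega(B)$; applying $S(t)$ and using weak continuity, $S(t_{k})(\xi_{k},\eta_{k})=S(t)S(t_{k}-t)(\xi_{k},\eta_{k})\rightharpoonup S(t)w$, whereas $S(t_{k})(\xi_{k},\eta_{k})\to y$ strongly, so $y=S(t)w\in S(t)\omega(B)$. Therefore $S(t)\omega(B)=\omega(B)$ for every $t\ge0$.

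\textbf{Main obstacle.} The only point requiring care is that $\{S(t)\}$ is merely a weakly continuous semigroup on $\mathcal{H}$; the argument circumvents this by pairing the strong convergence delivered by the asymptotic compactness of Lemma~5.1 with the weak convergence delivered by weak continuity, and then invoking the uniqueness of the limit in $\mathcal{H}$. Apart from this, everything is a direct application of the standard $\omega$-limit machinery.
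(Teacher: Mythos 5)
Your argument is correct and follows exactly the route the paper intends: the paper proves this lemma simply by citing the asymptotic compactness of Lemma~5.1 together with the standard $\omega$-limit machinery of \cite{chueshov}, which is precisely what you have written out, including the right way to handle the fact that $\{S(t)\}_{t\geq 0}$ is only weakly continuous (pairing the strong limits from Lemma~5.1 with weak continuity and uniqueness of weak limits).
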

\subsubsection{On $ Z_{M} $}
\begin{definition}
	Let $\left\{ S\left( t\right) \right\} _{t\geq 0}$ be a semigroup on a
	metric space $\left( X,d\right) .$ A set $\mathcal{A}\subset X$ is called a
	global attractor for the semigroup $\left\{ S\left( t\right) \right\}
	_{t\geq 0}$, iff\newline
	$\bullet $ $\mathcal{A}$ is a compact set.\newline
	$\bullet $ $\mathcal{A}$ is invariant, i.e. $S\left( t\right) \mathcal{A}=%
	\mathcal{A}$, $\forall t\geq 0.$\newline
	$\bullet $ $\underset{t\rightarrow \infty }{\lim }\underset{v\in B}{\sup }%
	\underset{w\in \mathcal{A}}{\inf }d\left( S\left( t\right) v,w\right) =0,$
	for each bounded set $B\subset X.$
\end{definition}
\begin{definition}
	Let $\mathcal{N}$ be the set of stationary points of the dynamical system $ \left(X,S\left( t\right)  \right)  $.
	We define the unstable manifold $\mathcal{M}^{u}\left( \mathcal{N}\right) $
	emanating from the set $\mathcal{N}$ as a set of all $y\in X$ such that
	there exists a full trajectory $\gamma =\{u(t):t\in R\}$ with the properties%
	\[
	u(0)=y\text{ and }\lim_{t\rightarrow -\infty }dist_{X}(u(t),\mathcal{N})=0.
	\]
\end{definition}
From Lemma 5.1, we easily obtained the following asymptotic compactness lemma for the dynamical system $\left(Z_{M},S_{M}\left( t\right) \right) $.
\begin{lemma}
	Let the conditions ($\Psi $) and ($P_{2}$) hold and $B$ be a bounded subset
	of $Z_{M} $. Then
	every sequence of the form $\left\{ S\left( t_{k}\right) \left( \xi
	_{k},\eta _{k}\right) \right\} _{k=1}^{\infty },$ where $\left\{ \left( \xi
	_{k},\eta _{k}\right) \right\} _{k=1}^{\infty }\subset B$, $t_{k}\rightarrow
	\infty $, has a convergent subsequence in $Z_{M} $.
\end{lemma}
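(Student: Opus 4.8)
The plan is to deduce the statement directly from Lemma 5.1 by exploiting that $Z_M$ is a closed affine subspace of $H^1(\Omega)\times L^2(\Omega)$ which is invariant under the semigroup, so that no new compactness argument is needed.

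First I would observe that, since $B\subset Z_M\subset H^1(\Omega)\times L^2(\Omega)$ is bounded, Lemma 5.1 applies verbatim to the sequence $\{S(t_k)(\xi_k,\eta_k)\}_{k=1}^{\infty}$: there is a subsequence, not relabelled, converging strongly in $H^1(\Omega)\times L^2(\Omega)$ to some pair $(\varphi_\infty,\sigma_\infty)$. It then remains only to check that this limit lies in $Z_M$ and that convergence takes place with respect to the norm of $Z_M$.

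For the membership, I would use the mass-conservation identity recorded at the beginning of Section~5.3, obtained by adding (1.1) and (1.3) and testing with $\xi=1$ (using (1.4)), namely $\int_\Omega(\varphi(t)+\sigma(t))\,dx=\int_\Omega(\varphi_0+\sigma_0)\,dx$ for all $t\ge 0$. Writing $S(t_k)(\xi_k,\eta_k)=(\varphi_k(t_k),\sigma_k(t_k))$ and recalling $(\xi_k,\eta_k)\in Z_M$, this gives $\int_\Omega(\varphi_k(t_k)+\sigma_k(t_k))\,dx=\int_\Omega(\xi_k+\eta_k)\,dx=|\Omega|M$ for every $k$; hence each element of the sequence belongs to $Z_M$. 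Since the map $(\varphi,\sigma)\mapsto\int_\Omega(\varphi+\sigma)\,dx$ is continuous on $H^1(\Omega)\times L^2(\Omega)$ and $Z_M$ is the preimage of $\{|\Omega|M\}$ under this map (hence closed), strong convergence in $H^1(\Omega)\times L^2(\Omega)$ forces $\int_\Omega(\varphi_\infty+\sigma_\infty)\,dx=|\Omega|M$, i.e. $(\varphi_\infty,\sigma_\infty)\in Z_M$. As the norm on $Z_M$ is precisely the one induced from $H^1(\Omega)\times L^2(\Omega)$, the extracted subsequence converges to $(\varphi_\infty,\sigma_\infty)$ in $Z_M$, which is the assertion.

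There is essentially no genuine obstacle here: the only point requiring (minor) care is the invariance $S(t)Z_M\subseteq Z_M$ of the affine subspace $Z_M$, which is exactly the mass-balance law already established, together with the observation that $Z_M$ is closed in $H^1(\Omega)\times L^2(\Omega)$ so that the strong limit inherits the constraint.
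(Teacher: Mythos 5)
Your argument is correct and is exactly the route the paper takes: Lemma 5.7 is stated there as an immediate consequence of Lemma 5.1, with the mass-conservation identity from Section 5.3 guaranteeing invariance of $Z_{M}$ and its closedness ensuring the limit stays in $Z_{M}$. You have simply written out the details the paper leaves implicit, so the proposal matches the paper's proof.
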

Moreover, from Lemma 5.2, we have gradient property also for $\left(Z_{M},S_{M}\left( t\right) \right) $ which is stated as follows.
\begin{lemma}
	Under conditions ($\Psi $) and ($P_{2}$), the dynamical system  $\left(Z_{M},S_{M}\left( t\right) \right) $ is a gradient system.
\end{lemma}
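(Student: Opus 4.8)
The plan is to obtain this statement directly from Lemma 5.2 by restriction to the invariant subspace $Z_M$, so there is essentially nothing new to prove. First I would record that $Z_M$ is positively invariant under the semigroup: adding (1.1) to (1.3) and testing with $\xi=1$, the boundary conditions (1.4) give the mass conservation identity $\int_\Omega(\varphi(t)+\sigma(t))\,dx=\int_\Omega(\varphi_0+\sigma_0)\,dx$ for all $t\geq0$, which was already noted just before (5.32). Consequently $S(t)Z_M\subset Z_M$, so $S_M(t):=S(t)|_{Z_M}$ is a well-defined weakly continuous semigroup on the closed affine subspace $Z_M$, and $(Z_M,S_M(t))$ is a legitimate dynamical system.

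Next I would verify the two defining properties of a gradient system for $(Z_M,S_M(t))$ using the energy functional $E(\varphi,\sigma)=\frac{1}{2}\|\nabla\varphi\|_{L^2(\Omega)}^2+\int_\Omega\Psi(\varphi)+\frac{\chi_\sigma}{2}\|\sigma\|_{L^2(\Omega)}^2+\int_\Omega\chi_\varphi\sigma(1-\varphi)$. Since $\Psi\in C^2(\mathbb{R})$ has polynomial growth of order $\rho<6$ and $H^1(\Omega)\hookrightarrow L^\rho(\Omega)$, the functional $E$ is continuous on $H^1(\Omega)\times L^2(\Omega)$, hence continuous on $Z_M$ equipped with the induced topology. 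Moreover, the energy identity (5.29), which holds with equality under ($\Psi$) and ($P_2$), shows that $t\mapsto E(S_M(t)(\varphi_0,\sigma_0))$ is nonincreasing for every $(\varphi_0,\sigma_0)\in Z_M$; thus $E|_{Z_M}$ is a Lyapunov function for $(Z_M,S_M(t))$.

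Finally, for strictness, suppose $(\varphi_0,\sigma_0)\in Z_M$ satisfies $E(S_M(t)(\varphi_0,\sigma_0))=E(\varphi_0,\sigma_0)$ for all $t\geq0$. Then (5.29) forces $\int_s^t\big(\|\nabla\mu\|_{L^2(\Omega)}^2+\|\nabla N_\sigma\|_{L^2(\Omega)}^2\big)\,d\tau+\int_s^t\int_\Omega p(\varphi)(N_\sigma-\mu)^2\,dx\,d\tau=0$ for all $t\geq s\geq0$, so $\nabla\mu=\nabla N_\sigma=0$ and $\sqrt{p(\varphi)}(N_\sigma-\mu)=0$ a.e., exactly as in (5.30); substituting this into the weak formulation (3.12)--(3.13) yields $\varphi_t=\sigma_t=0$, i.e. $S_M(t)(\varphi_0,\sigma_0)=(\varphi_0,\sigma_0)$ for all $t\geq0$ as in (5.31). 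Hence $E|_{Z_M}$ is a strict Lyapunov function and $(Z_M,S_M(t))$ is a gradient system. There is no real obstacle in this argument: the entire content is inherited from Lemma 5.2, the only two points needing an (immediate) check being that $Z_M$ is invariant under the flow and that the restriction of a strict Lyapunov function to a positively invariant set remains a strict Lyapunov function for the restricted dynamics.
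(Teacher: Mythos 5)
Your proposal is correct and follows exactly the paper's route: the paper derives this lemma as an immediate consequence of Lemma 5.2 by restricting the strict Lyapunov function $E$ to the mass-conserving affine subspace $Z_M$, which is positively invariant by the identity noted just before (5.32). Your additional checks (invariance of $Z_M$ and the strictness argument via (5.29)--(5.31)) are precisely the details the paper leaves implicit, so there is nothing to correct.
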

Hence, from Lemma 5.1, Lemma 5.6 and Lemma 5.7, applying the proof of \cite[Theorem 4.1]{khan}, we deduce the existence of the global attractor for the dynamical system $\left(Z_{M},S_{M}\left( t\right) \right) $. For the convenience of the reader, we state the theorem with its proof as follows.
\begin{theorem}
	Assume that the assumptions ($\Psi $) and ($P_{2}$) are satisfied. Then, the
	semigroup $\left\{ S_{M}\left( t\right) \right\} _{t\geq 0}$ generated by
	the weak solutions of the problem (1.1)-(1.4) possesses a global attractor $%
	\mathcal{A}_{M}$ in $Z_{M}$, and $\mathcal{A_{M}}=\mathcal{M}^{u}\left( \mathcal{
		N}_{M}\right) $.
\end{theorem}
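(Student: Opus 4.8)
The plan is to derive the statement from the abstract theory of gradient dynamical systems, following \cite[Theorem 4.1]{khan}: a dynamical system that is asymptotically compact, possesses a strict Lyapunov function, and whose set of equilibria is bounded admits a compact global attractor which coincides with the unstable manifold emanating from the equilibrium set. All three hypotheses are already available for $\left(Z_{M},S_{M}(t)\right)$: asymptotic compactness is Lemma 5.7, the gradient (strict Lyapunov) structure is Lemma 5.8, and the nonemptiness and boundedness of $\mathcal{N}_{M}$ in $Z_{M}$ is Lemma 5.5. Hence the work reduces to assembling these ingredients and running the standard argument.

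First I would record that $\{S_{M}(t)\}_{t\geq 0}$ maps bounded subsets of $Z_{M}$ into sets bounded uniformly in $t\geq 0$. This follows from the energy identity (5.29), which gives $E\left(S_{M}(t)(\varphi_{0},\sigma_{0})\right)\leq E(\varphi_{0},\sigma_{0})$ for all $t\geq 0$, together with the coercive lower bound (5.35), which yields $\left\Vert S_{M}(t)(\varphi_{0},\sigma_{0})\right\Vert_{H^{1}(\Omega)\times L^{2}(\Omega)}^{2}\leq c\left(E(\varphi_{0},\sigma_{0})+1\right)$ for all $t\geq 0$; since $\Psi$ has subcritical growth ($\rho<6$), the Sobolev embedding $H^{1}(\Omega)\hookrightarrow L^{6}(\Omega)$ makes $E$ bounded on bounded subsets of $H^{1}(\Omega)\times L^{2}(\Omega)$, so $E(\varphi_{0},\sigma_{0})$ is controlled by the initial norm. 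Together with Lemma 5.7 this makes $\left(Z_{M},S_{M}(t)\right)$ an asymptotically compact dynamical system; by Theorems 3.1 and 3.2 the operators $S_{M}(t)$ are well defined and weakly continuous, and on compact invariant sets weak continuity upgrades to strong continuity by the same asymptotic-compactness bootstrap used in the proof of Lemma 5.1.

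The existence of the global attractor $\mathcal{A}_{M}$ then follows in the usual way: asymptotic compactness ensures that $\omega$-limit sets of bounded sets are nonempty, compact, invariant and attracting, while the gradient structure together with the boundedness of $\mathcal{N}_{M}$ forces every trajectory to be attracted to $\mathcal{N}_{M}$ and $\mathcal{A}_{M}$ to lie in the sublevel set $\{E\leq\sup_{\mathcal{N}_{M}}E\}$, which is bounded in $Z_{M}$ by (5.35); thus $\mathcal{A}_{M}$ is a compact global attractor. To prove $\mathcal{A}_{M}=\mathcal{M}^{u}(\mathcal{N}_{M})$ I would argue the two inclusions. For $\mathcal{M}^{u}(\mathcal{N}_{M})\subseteq\mathcal{A}_{M}$: any $y\in\mathcal{M}^{u}(\mathcal{N}_{M})$ lies on a full trajectory $u(t)$ with $\mathrm{dist}(u(t),\mathcal{N}_{M})\to 0$ as $t\to-\infty$; since $\mathcal{N}_{M}$ is bounded and $E$ is nonincreasing, $E(u(t))$ stays bounded along the whole trajectory, so $u(\cdot)$ is bounded in $Z_{M}$, and every bounded complete trajectory lies in $\mathcal{A}_{M}$. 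For $\mathcal{A}_{M}\subseteq\mathcal{M}^{u}(\mathcal{N}_{M})$: given $y\in\mathcal{A}_{M}$, invariance provides a complete trajectory $u(t)\subset\mathcal{A}_{M}$ with $u(0)=y$; as $t\to-\infty$, $E(u(t))$ is nondecreasing and bounded above, hence convergent, so the $\alpha$-limit set $\alpha(y)$ is nonempty, compact, invariant, and $E$ is constant on it; by the strict Lyapunov property (Lemma 5.8) $\alpha(y)\subseteq\mathcal{N}_{M}$, whence $\mathrm{dist}(u(t),\mathcal{N}_{M})\to 0$ as $t\to-\infty$, i.e. $y\in\mathcal{M}^{u}(\mathcal{N}_{M})$.

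The one point that genuinely needs care, and the main obstacle, is reconciling the merely weak continuity of $\{S_{M}(t)\}$ with the abstract attractor theorems, which are usually stated for strongly continuous semigroups. I would handle this exactly as in Lemma 5.1: all the limits in the arguments above---along trajectories inside the attractor and along the bounded sequences produced by absorption---are first extracted weakly from the a priori estimates (5.1)--(5.3), and then promoted to strong convergence in $H^{1}(\Omega)\times L^{2}(\Omega)$ by means of the differential inequality (5.23)--(5.25) together with the limit relations (5.26)--(5.28). Once strong continuity on bounded invariant sets is secured, the structure theorem for gradient systems \cite[Theorem 4.1]{khan} applies and yields both the existence of $\mathcal{A}_{M}$ and the identity $\mathcal{A}_{M}=\mathcal{M}^{u}(\mathcal{N}_{M})$.
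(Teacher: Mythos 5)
Your proposal is correct and follows essentially the same route as the paper: both assemble Lemma 5.5 (nonemptiness and boundedness of $\mathcal{N}_{M}$), Lemma 5.7 (asymptotic compactness on $Z_{M}$) and Lemma 5.8 (strict Lyapunov functional), and then run the gradient-system argument of \cite[Theorem 4.1]{khan}, with the key step being the $\alpha$-limit-set argument that forces backward convergence to $\mathcal{N}_{M}$ and yields $\mathcal{A}_{M}=\mathcal{M}^{u}\left(\mathcal{N}_{M}\right)$. The only cosmetic difference is that the paper directly defines $\mathcal{A}_{M}:=\mathcal{M}^{u}\left(\mathcal{N}_{M}\right)$ and checks it attracts via $\omega(B)\subset\mathcal{M}^{u}\left(\mathcal{N}_{M}\right)$, whereas you first invoke existence of the attractor and then prove the two inclusions, which amounts to the same argument.
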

\begin{proof}
	Let $B$ be a bounded subset \ of  $Z_{M}$. Then, Lemma 5.7 entails that $ \omega\left( B\right)  $ is compact, invariant and attracts $B.$ Let $\theta \in \omega \left(
	B\right) .$ By the invariance of $\omega \left( B\right) $, there exists a
	full trajectory $\left\{ \left( \varphi\left( t\right) ,\sigma\left( t\right)
	\right) ,\text{ }t\in
	\mathbb{R}
	\right\} \subset \omega \left( B\right) $ such that $\left( \varphi\left( 0\right)
	,\sigma\left( 0\right) \right) =\theta $.\\
	On the other hand, from lemma 5.8, energy functional\begin{equation*}
	E\left( \varphi ,\sigma \right) :=\frac{1}{2}\left\Vert \nabla \varphi
	\right\Vert _{L_{2}\left( \Omega \right) }^{2}+\underset{\Omega }{\int }%
	\Psi \left( \varphi \right) +\frac{\chi _{\sigma }}{2}\left\Vert \sigma
	\right\Vert _{L_{2}\left( \Omega \right) }^{2}+\underset{\Omega }{\int }%
	\chi _{\varphi }\sigma \left( 1-\varphi \right) .
	\end{equation*}%
	is the strict Lyapunov functional. Then, also considering the boundedness of \ $E\left( \left( \varphi\left(
	t\right) ,\sigma\left( t\right) \right) \right) ,$ we have%
	\begin{equation}
	\lim_{t\rightarrow -\infty }E\left( \left( \varphi\left( t\right) ,\sigma\left(
	t\right) \right) \right) =l.  \tag{5.48}
	\end{equation}%
	Now, let us define $\alpha \left( \theta \right) :=\left\{ \vartheta \in
	\omega \left( B\right) ,\text{ there exists a sequence }\left\{ \left(
	\varphi\left( t_{k}\right) ,\sigma\left( t_{k}\right) \right) \right\}
	_{k=1}^{\infty },\text{ such that}\right. $ \newline
	$\left. t_{k}\searrow -\infty \text{ and }\left( \varphi\left( t_{k}\right)
	,\sigma\left( t_{k}\right) \right) \rightarrow \vartheta \text{ strongly in } Z_{M}\right\} .$ It is easy to see that $\alpha \left( \theta \right) $
	is compact and invariant subset of $\omega \left( B\right) .$ Also, by
	(5.48), it follows that%
	\begin{equation*}
	E\left( \vartheta \right) =l\text{ \ }\forall \vartheta \in \alpha \left(
	\theta \right) \text{,}
	\end{equation*}%
	which, together with invariance of $\alpha \left( \theta \right) $, yields%
	\begin{equation*}
	E\left( S\left( t\right) \vartheta \right) =l\text{ \ }\forall \vartheta \in
	\alpha \left( \theta \right) ,\text{ \ }\forall t\geq 0.
	\end{equation*}%
	Since $ E $ is strict Lyapunov functional, from the last two equality we obtain $\vartheta \in \mathcal{N}_{M}$. Hence, we have that $\alpha \left( \theta \right) \subset \mathcal{N}_{M}$
	and consequently%
	\begin{equation}
	\omega \left( B\right) \subset \mathcal{M}^{u}\left( \mathcal{N}_{M}\right) .
	\tag{5.49}
	\end{equation}%
	By using the weak continuity and the asymptotic compactness of $\left\{
	S_{M}\left( t\right) \right\} _{t\geq 0}$ and the boundedness of $\mathcal{N}_{M}$
	in $Z_{M}$, it is easy to show that $\mathcal{M}^{u}\left( \mathcal{N_{M}}%
	\right) $ is invariant and compact. Thus, in virtue of (5.49), we obtain that
	$\mathcal{A}_{M}:=\mathcal{M}^{u}\left( \mathcal{N}\right) $ is a global
	attractor.
\end{proof}
\subsection{Regularity of the attractor}
In the following theorem, we will the regularity of the global
attractor.
\begin{theorem}
	Under the assumptions of Theorem 5.9, global attractor $\mathcal{A}_{M}$
	for the problem (1.1)-(1.4) is bounded in $\left( H^{3}\left( \Omega \right)\times
	H^{1}\left( \Omega \right)\right) \cap Z_{M} $.
\end{theorem}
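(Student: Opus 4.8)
The plan is to establish uniform-in-time bounds for trajectories lying on the attractor $\mathcal{A}_M$, exploiting its full invariance $S_M(t)\mathcal{A}_M = \mathcal{A}_M$ together with the smoothing estimates already derived for strong solutions in the proof of Theorem 3.3. The key structural fact is that any point $(\varphi_0,\sigma_0)\in\mathcal{A}_M$ lies on a complete bounded trajectory $\{(\varphi(t),\sigma(t)):t\in\mathbb{R}\}\subset\mathcal{A}_M$, and by the attractor's boundedness in $Z_M$ (Theorem 5.9) this trajectory satisfies $\|(\varphi(t),\sigma(t))\|_{H^1\times L^2}\le R$ uniformly in $t\in\mathbb{R}$, where $R$ depends only on $M$. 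Consequently the dissipation integrals from the energy identity (3.19) are uniformly bounded on every unit time interval: $\int_t^{t+1}(\|\nabla\mu\|_{L^2}^2+\|\nabla N_\sigma\|_{L^2}^2+\|\sqrt{p(\varphi)}(N_\sigma-\mu)\|_{L^2}^2)\,d\tau\le c(R)$ for all $t\in\mathbb{R}$.

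\textbf{The main steps are as follows.} First, from the bound (5.1)-type estimate restricted to the attractor and the argument already used to obtain (4.15) and (4.17), I would deduce $\int_t^{t+1}\|\varphi(\tau)\|_{H^3(\Omega)}^2\,d\tau\le c(R)$ and $\int_t^{t+1}\|\sigma(\tau)\|_{H^2(\Omega)}^2\,d\tau\le c(R)$ uniformly in $t$. Next I would run the differentiated energy estimate from the proof of Theorem 3.3 — that is, the combined inequality (4.47) obtained by testing (1.1) with $\mu_t$, (1.3) with $N_{\sigma t}$, and (1.1) with $\varphi_t$ — in the form
\begin{equation*}
\frac{d}{dt}\,\Phi(t)+\tfrac14\|\nabla\varphi_t\|_{L^2}^2+\tfrac{\chi_\sigma}{2}\|\sigma_t\|_{L^2}^2\le\pi_1(t)\,\Phi(t)+\pi_2(t),
\end{equation*}
where $\Phi(t):=\tfrac12\|\nabla\mu\|_{L^2}^2+\tfrac12\|\nabla N_\sigma\|_{L^2}^2+\tfrac{c}{2}\|\Delta\varphi\|_{L^2}^2+\tfrac12\int_\Omega p(\varphi)(N_\sigma-\mu)^2$, and where $\pi_1,\pi_2$ are the same quantities displayed after (4.47), now known to satisfy $\int_t^{t+1}(\pi_1+\pi_2)\,d\tau\le c(R)$ for all $t\in\mathbb{R}$ thanks to the uniform $H^3\times H^2$-in-time-average bounds and the growth assumption on $\Psi''$ in ($\Psi$). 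Then I would apply the uniform Gronwall lemma (the standard dissipative version: if $y'\le gy+h$ with $\int_t^{t+1}y,\int_t^{t+1}g,\int_t^{t+1}h$ all bounded by $a_1,a_2,a_3$, then $y(t+1)\le(a_1+a_3)e^{a_2}$): since $\int_t^{t+1}\Phi\,d\tau\le c(R)$ follows from the dissipation bounds in (5.2)-type estimates plus the $H^2$ control of $\varphi$, we conclude $\sup_{t\in\mathbb{R}}\Phi(t)\le c(R)$, hence $\nabla\mu,\nabla N_\sigma,\Delta\varphi\in L^\infty(\mathbb{R};L^2(\Omega))$ uniformly on the attractor. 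Combining with the Poincaré–Wirtinger control on the means (as in (4.11)–(4.12), noting that on the attractor $\int_\Omega\Psi'(\varphi)$ is bounded because $\|\varphi\|_{H^1}$ is), this gives $\mu\in L^\infty(\mathbb{R};H^1)$ and $\varphi\in L^\infty(\mathbb{R};H^2)$; bootstrapping exactly as in the last paragraph of the proof of Theorem 3.3 — $\Psi'(\varphi)\in L^\infty(\mathbb{R};H^1)$, then $\Delta\varphi=\Psi'(\varphi)-\chi_\varphi\sigma-\mu\in L^\infty(\mathbb{R};H^1)$, then elliptic regularity — yields $\varphi\in L^\infty(\mathbb{R};H^3(\Omega))$, and the equation for $N_\sigma$ gives $\sigma\in L^\infty(\mathbb{R};H^1(\Omega))$. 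Since these bounds hold uniformly over all complete trajectories in $\mathcal{A}_M$, they bound $\mathcal{A}_M$ in $(H^3(\Omega)\times H^1(\Omega))\cap Z_M$.

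\textbf{The main obstacle} will be justifying the uniform smallness of the time-averaged integral $\int_t^{t+1}\Phi(\tau)\,d\tau$ needed to feed the uniform Gronwall lemma — in particular, controlling $\int_t^{t+1}\|\Delta\varphi\|_{L^2}^2\,d\tau$ uniformly in $t$ on the attractor. This requires first obtaining $\sigma\in L^2_{\mathrm{loc,unif}}(\mathbb{R};H^1)$ on the attractor, which in turn comes from the dissipation term $\int_t^{t+1}\|\nabla N_\sigma\|_{L^2}^2$ being uniformly bounded (immediate from (3.19) and invariance), together with the identity $N_\sigma=\chi_\sigma\sigma+\chi_\varphi(1-\varphi)$ and the uniform $H^1$-bound on $\varphi$. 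A secondary technical point is that all these estimates for strong solutions were derived formally and justified via Galerkin; on the attractor I need the weak solutions to actually be strong, which follows because the attractor is invariant and, by the regularization estimate above (or by Theorem 5.9 combined with the instantaneous smoothing in Theorem 3.3 applied on $[t_0,t_0+1]$ with $H^3\times H^1$ data extracted from a time where the trajectory is regular), every trajectory on $\mathcal{A}_M$ is a strong solution for all $t\in\mathbb{R}$. Once this regularity of the complete trajectories is in place, the differentiated estimate (4.47) is rigorous and the uniform Gronwall argument closes.
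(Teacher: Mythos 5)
Your proposal is correct, but it finishes differently from the paper. The shared skeleton is: full invariance of $\mathcal{A}_M$, the unit-interval smoothing estimate $\int_t^{t+1}\bigl(\Vert\varphi(\tau)\Vert_{H^{3}(\Omega)}^{2}+\Vert\sigma(\tau)\Vert_{H^{1}(\Omega)}^{2}\bigr)d\tau\le c$ along trajectories bounded in $H^{1}\times L^{2}$, and the strong-solution machinery of Theorem 3.3. The paper stops much earlier than you do: it fixes $\widetilde{t}>2$, writes $(\varphi_0,\sigma_0)=S_M(\widetilde{t})(\widetilde{\varphi}_0,\widetilde{\sigma}_0)$ with $(\widetilde{\varphi}_0,\widetilde{\sigma}_0)$ on a full trajectory in $\mathcal{A}_M$, uses the mean value theorem to find $t^{\ast}\in(t,t+1)$ with $0<t<1$ and $\Vert\varphi(t^{\ast})\Vert_{H^{3}}^{2}+\Vert\sigma(t^{\ast})\Vert_{H^{1}}^{2}\le c_1$, and then simply flows forward the bounded time $\widetilde{t}-t^{\ast}$, invoking the finite-time a priori bounds for strong solutions (Gronwall on (4.47) over an interval of length at most $\widetilde{t}$) to bound $(\varphi_0,\sigma_0)$ in $H^{3}\times H^{1}$ uniformly over the attractor. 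In your argument this pullback-plus-mean-value step appears only as the ``secondary technical point'' guaranteeing that trajectories on $\mathcal{A}_M$ are strong solutions; you then add a further layer --- uniform-in-$t$ integrability of $\pi_1,\pi_2$ over unit intervals and the uniform Gronwall lemma --- to get $\sup_{t\in\mathbb{R}}\Phi(t)\le c(R)$ along complete trajectories, followed by the same elliptic bootstrap. Both routes are sound: yours delivers the slightly stronger conclusion of uniform bounds along entire trajectories, at the price of verifying the unit-interval bounds on $\pi_1,\pi_2$ (which do hold, exactly as in the paper's check that $\Vert\pi_i\Vert_{L^{1}(0,T)}\le\Lambda(1+T)$), whereas the paper's pullback by a bounded time makes the uniform Gronwall layer unnecessary, since once a time $t^{\ast}$ with $H^{3}\times H^{1}$ data is located within bounded distance of the target time, the finite-interval estimate of Theorem 3.3 already closes the proof.
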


\begin{proof}
	Let $\left( \varphi _{0},\sigma _{0}\right) \in \mathcal{A}_{M}$. Since $%
	\mathcal{A}_{M}$ is invariant, we obtain for any $t>0$%
	\begin{equation*}
	S_{M}\left( t\right) \mathcal{A}_{M}=\mathcal{A}_{M}
	\end{equation*}%
	which means that for any $\widetilde{t}>2$ there exists $\left( \widetilde{%
		\varphi }_{0},\widetilde{\sigma }_{0}\right) \in \mathcal{A}_{M}$ such that%
	\begin{equation*}
	\left( \varphi _{0},\sigma _{0}\right) =S_{M}\left( \widetilde{t}\right) \left( 
	\widetilde{\varphi }_{0},\widetilde{\sigma }_{0}\right) .
	\end{equation*}%
	On the other hand, since $\left( \widetilde{\varphi }_{0},\widetilde{\sigma }%
	_{0}\right) \in \mathcal{A}_{M}$ there exists an invariant trajectory \[ %
	\gamma :=\left\{ \left( \varphi \left( t\right) ,\sigma \left( t\right)
	\right) :t\in 
	\mathbb{R}
	\right\} \subset \mathcal{A}_{M} \] such that $\left( \varphi \left( 0\right)
	,\sigma \left( 0\right) \right) =\left( \widetilde{\varphi }_{0},\widetilde{%
		\sigma }_{0}\right) .$ Then,recalling the regularization of weak solutions, for all $t>0$, we have,%
	\begin{equation*}
	\int\limits_{t}^{t+1}\left( \left\Vert \varphi \left( \tau \right)
	\right\Vert _{H^{3}\left( \Omega \right) }^{2}+\left\Vert \sigma \left( \tau
	\right) \right\Vert _{H^{1}\left( \Omega \right) }^{2}\right) d\tau \leq
	c_{1}
	\end{equation*}%
	where $c_{1}$ is independent of $t$. Hence, using the mean value theorem
	for integrals, for all $t>0$, we deduce that%
	\begin{equation}
	\text{ there exists }t^{\ast }\in \left( t,t+1\right) ,\text{ such that }%
	\left\Vert \varphi \left( t^{\ast }\right) \right\Vert _{H^{3}\left( \Omega
		\right) }^{2}+\left\Vert \sigma \left( t^{\ast }\right) \right\Vert
	_{H^{1}\left( \Omega \right) }^{2}\leq c_{1}.  \tag{5.50}
	\end{equation}%
	If we choose $0<t<1$ in $\left( 5.50\right) $, we obtain that $\widetilde{t}%
	>t^{\ast }$, and 
	\begin{equation*}
	\left( \varphi _{0},\sigma _{0}\right) =S_{M}\left( \widetilde{t}-t^{\ast
	}\right) \left( \varphi \left( t^{\ast }\right) ,\sigma \left( t^{\ast
	}\right) \right) 
	\end{equation*}%
	which yields%
	\begin{equation*}
	\left\Vert \varphi _{0}\right\Vert _{H^{3}\left( \Omega \right)
	}^{2}+\left\Vert \sigma _{0}\right\Vert _{H^{1}\left( \Omega \right)
	}^{2}\leq c_{2}.
	\end{equation*}
\end{proof}
\section{Finite dimensionality of the global attractor}
\begin{definition}
	Let $ A $ be a compact set in a metric space $ X $. The fractal (box-counting) dimension $ dim_{f}A $ of $ A $ is defined by
	\begin{equation*}
	 dim_{f}A=\limsup_{\epsilon\rightarrow 0^{+}}\dfrac{lnN_{\epsilon}\left(A\right) }{ln\left(1/\epsilon \right) }
	\end{equation*}
	where $ N_{\epsilon}\left(A\right) $ is the minimal number of closed balls of the radius $ \epsilon $ which cover the set $ A $.
\end{definition}
In this section we benefit from the ideas of \cite{gal}, \cite{khan1} and \cite{MZ}). Let us start with the
following lemma.

\begin{lemma}
Assume that the assumptions of Theorem 5.9 hold. Also, let $\left\{ T\left(
t,\tau \right) \right\} _{t\geq \tau }$ be the process generated by the
problem%
\begin{equation}
\left\{ 
\begin{array}{rr}
\widehat{\varphi }_{t}(t,x)+A^{2}\widehat{\varphi }(t,x)-\chi _{\varphi }A%
\widehat{\sigma }(t,x)+R_{1}A\widehat{\varphi }(t,x)=0&\text{ \ \ }t\geq \tau \text{, }%
x\in \Omega, \\ 
\widehat{\sigma }_{t}(t,x)+\chi _{\sigma }A\widehat{\sigma }(t,x)-\chi _{\varphi }A%
\widehat{\varphi }(t,x)=0&\text{ \ \ }t\geq \tau \text{, }x\in \Omega,  \\ 
\partial _{\nu }\widehat{\varphi }(t,x)=\partial _{\nu }\widehat{\sigma }(t,x)=0&\text{%
 \ \ }t\geq \tau \text{, }x\in \Gamma,  \\ 
\widehat{\varphi }\left( \tau \right) =\widehat{\varphi }_{0}\text{, \ \ }%
\widehat{\sigma }\left( \tau \right) =\widehat{\sigma }_{0},&%
\end{array}%
\right.   \tag{6.1}
\end{equation}%
in $H^{1}\left( \Omega \right) \times L^{2}\left( \Omega \right) $. Then, 
\newline
i) There exist $\omega _{1}>0$ such that%
\begin{equation}
\left\Vert T\left( t,\tau \right) \right\Vert _{L\left( H^{1}\left( \Omega
\right) \times L^{2}\left( \Omega \right),H^{1}\left( \Omega
\right) \times L^{2}\left( \Omega \right) \right) }\leq 2e^{-\omega
_{1}\left( t-\tau \right) }\text{ \ }\forall t\geq \tau,   \tag{6.2}
\end{equation}%
where $L\left( X,Y\right) $ is the space of linear bounded operators from $X$ to $Y$.%
\newline
ii) There exist $\Lambda>1$ and $\omega _{2}>0$ such that%
\begin{equation}
\left\Vert T\left( t,\tau \right) \right\Vert _{L( \overset{\ast }{H^{1}%
	}\left( \Omega \right) \times \overset{\ast }{H^{1}}\left( \Omega \right)
	,H^{1}\left( \Omega \right) \times L^{2}\left( \Omega \right)  )}\leq \Lambda%
\frac{e^{-\omega _{2}\left( t-\tau \right) }}{\sqrt{t-\tau }}\text{ \ }%
\forall t\geq \tau .  \tag{6.3}
\end{equation}%
\end{lemma}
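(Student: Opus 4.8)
The plan is to exploit that all the spatial operators in (6.1) are functions of the single operator $A$, so the system is simultaneously diagonalised by the eigenbasis $\{w_j\}$ of $A$: writing $\widehat\varphi=\sum_j a_j w_j$ and $\widehat\sigma=\sum_j b_j w_j$, problem (6.1) decouples into the planar linear systems $\tfrac{d}{dt}(a_j,b_j)^{\top}=-M_j(a_j,b_j)^{\top}$ with
\begin{equation*}
M_j=\begin{pmatrix} \lambda_j^{2}+R_1\lambda_j & -\chi_\varphi\lambda_j\\ -\chi_\varphi\lambda_j & \chi_\sigma\lambda_j\end{pmatrix}.
\end{equation*}
First I would record the relevant linear algebra: $M_j$ is symmetric with $\operatorname{tr}M_j=\lambda_j^{2}+(R_1+\chi_\sigma)\lambda_j>0$ and $\det M_j=\lambda_j^{2}\bigl(\chi_\sigma\lambda_j+R_1\chi_\sigma-\chi_\varphi^{2}\bigr)$, the latter being strictly positive \emph{precisely} because the standing assumption $R_1>2\chi_\varphi^{2}/\chi_\sigma$ forces $R_1\chi_\sigma-\chi_\varphi^{2}>\chi_\varphi^{2}\ge0$. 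Hence both eigenvalues of $M_j$ are positive and, since $\lambda_j\ge\lambda_1=1$, the smaller obeys $\mu_j^{-}\ge\det M_j/\operatorname{tr}M_j\ge\omega_0\lambda_j$ for a fixed $\omega_0>0$, while $\mu_j^{+}\le\operatorname{tr}M_j\le c\lambda_j^{2}$.

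For part i), note that in this basis $\|(\widehat\varphi,\widehat\sigma)\|_{H^{1}\times L^{2}}^{2}=\sum_j(\lambda_j a_j^{2}+b_j^{2})$, i.e. the Euclidean norm weighted by $D_j=\operatorname{diag}(\lambda_j,1)$, so that the mode-$j$ solution operator, measured in the energy norm, is $e^{-\widetilde M_j(t-\tau)}$ with $\widetilde M_j=D_j^{1/2}M_jD_j^{-1/2}$. Since the chemotactic coupling makes $\widetilde M_j$ non-symmetric, a clean contraction is not immediate; I would extract (6.2) either by using the explicit planar exponential $e^{-\widetilde M_j\theta}=e^{-\theta\,\operatorname{tr}M_j/2}\bigl(\cosh(s_j\theta)I-s_j^{-1}\sinh(s_j\theta)\widetilde N_j\bigr)$, $\widetilde N_j:=\widetilde M_j-\tfrac12(\operatorname{tr}M_j)I$, $\widetilde N_j^{2}=s_j^{2}I$, and checking on its singular values that the transient amplification is bounded uniformly in $j$ (it tends to $1$ as $\lambda_j\to\infty$ and is finite for the finitely many small eigenvalues), so $e^{\omega_1\theta}\|e^{-\widetilde M_j\theta}\|\le2$ for any $\omega_1<\omega_0$; or, equivalently, by a modified-energy argument directly on (6.1). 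For the latter I would test the first equation of (6.1) by $\widehat\varphi$ and the second by $\widehat\sigma$, so the two coupling terms merge into $2\chi_\varphi\langle A^{1/2}\widehat\varphi,A^{1/2}\widehat\sigma\rangle$, which Young's inequality absorbs into $R_1\|A^{1/2}\widehat\varphi\|^{2}+\chi_\sigma\|A^{1/2}\widehat\sigma\|^{2}$ thanks to $R_1>2\chi_\varphi^{2}/\chi_\sigma$; this yields exponential decay of $\|\widehat\varphi\|_{L^{2}}^{2}+\|\widehat\sigma\|_{L^{2}}^{2}$ together with dissipation of $\|A\widehat\varphi\|^{2}$, $\|A^{1/2}\widehat\varphi\|^{2}$, $\|A^{1/2}\widehat\sigma\|^{2}$. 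Adding a small multiple $\delta$ of the identity obtained by testing the first equation by $A\widehat\varphi$ (whose coupling term $\chi_\varphi\langle A^{1/2}\widehat\sigma,A^{3/2}\widehat\varphi\rangle$ is absorbed by the biharmonic dissipation $\|A^{3/2}\widehat\varphi\|^{2}$ and the spare $\|A^{1/2}\widehat\sigma\|^{2}$) produces a functional $\Phi$ equivalent to $\|(\widehat\varphi,\widehat\sigma)\|_{H^{1}\times L^{2}}^{2}$ with $\tfrac{d}{dt}\Phi\le-2\omega_1\Phi$, and (6.2) follows by Gronwall; tuning $\delta$ and the Young constants gives the asserted factor $2$.

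For part ii), I would obtain the smoothing by parabolic time-weighting. Testing (6.1) in addition by $A^{-1}\widehat\varphi$ and $A^{-1}\widehat\sigma$ and arguing as above (the coupling now reducing to $2\chi_\varphi\langle\widehat\varphi,\widehat\sigma\rangle$, again absorbed via $R_1>2\chi_\varphi^{2}/\chi_\sigma$) yields decay of $\|\widehat\varphi\|_{\overset{\ast}{H^{1}}}^{2}+\|\widehat\sigma\|_{\overset{\ast}{H^{1}}}^{2}$ and, upon integrating in time, the crucial estimate
\begin{equation*}
\int_\tau^{\infty}\bigl\|T(s,\tau)(\widehat\varphi_0,\widehat\sigma_0)\bigr\|_{H^{1}\times L^{2}}^{2}\,ds\le C\bigl(\|\widehat\varphi_0\|_{\overset{\ast}{H^{1}}}^{2}+\|\widehat\sigma_0\|_{\overset{\ast}{H^{1}}}^{2}\bigr).
\end{equation*}
Since the functional $\Phi$ of part i) is non-increasing and comparable to $\|\cdot\|_{H^{1}\times L^{2}}^{2}$, integrating $\tfrac{d}{dt}\bigl[(t-\tau)\Phi(t)\bigr]\le\Phi(t)$ over $[\tau,t]$ gives $(t-\tau)\|T(t,\tau)(\widehat\varphi_0,\widehat\sigma_0)\|_{H^{1}\times L^{2}}^{2}\le C(\|\widehat\varphi_0\|_{\overset{\ast}{H^{1}}}^{2}+\|\widehat\sigma_0\|_{\overset{\ast}{H^{1}}}^{2})$, i.e. the $(t-\tau)^{-1/2}$ gain. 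The exponential factor is then restored by the semigroup property: split $T(t,\tau)=T(t,\tfrac{t+\tau}{2})\,T(\tfrac{t+\tau}{2},\tau)$, apply the smoothing just proved on the first factor and (6.2) on the second; this also makes $\Lambda>1$ explicit in terms of the constant $2$ of (6.2). Mode by mode the scaling is transparent: the $\widehat\varphi$-component decays at the biharmonic rate, so $\sup_{\lambda\ge1}\lambda e^{-c\lambda^{2}(t-\tau)}\simeq(t-\tau)^{-1/2}$ when measured into $H^{1}$, while the $\widehat\sigma$-component decays at the second-order rate, giving $\sup_{\lambda\ge1}\lambda^{1/2}e^{-c\lambda(t-\tau)}\simeq(t-\tau)^{-1/2}$ into $L^{2}$.

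The main obstacle is part i): because of the chemotactic term $-\chi_\varphi A\widehat\sigma$ the generator of $T$ is not self-adjoint — not even normal — with respect to the $H^{1}\times L^{2}$ inner product, so exponential contraction in the energy norm is not automatic and must be drawn either from the uniform bound on the transient amplification of the planar exponentials $e^{-\widetilde M_j\theta}$ or from a carefully weighted energy functional. In both routes the hypothesis $R_1>2\chi_\varphi^{2}/\chi_\sigma$ is indispensable: it is exactly what renders the reduced matrices $M_j$ positive definite (hence the uniform gap $\omega_0>0$) and what makes the Young absorptions close, and it is the quantitative origin of the constant $2$ in (6.2).
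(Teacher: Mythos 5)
Your energy-method route coincides with the paper's proof: for part i) the paper likewise tests (6.1) with $\widehat{\varphi }$, $\widehat{\sigma }$ and then with $A\widehat{\varphi }$, absorbs the chemotactic coupling by Young's inequality using $R_{1}>2\chi _{\varphi }^{2}/\chi _{\sigma }$, combines the two inequalities and applies Gronwall; for part ii) it tests with $A^{-1}\widehat{\varphi }$, $A^{-1}\widehat{\sigma }$ to get decay of the dual norm together with the integrated $H^{1}\times L^{2}$ dissipation, and then derives exactly your time-weighted bound $\left( t-\tau \right) \Phi \left( t\right) \leq \int_{\tau }^{t}\Phi \left( s\right) ds$ (the paper obtains it by testing with $\left( t-\tau \right) A^{-1}\widehat{\varphi }_{t}$ and $\left( t-\tau \right) A^{-1}\widehat{\sigma }_{t}$, you by monotonicity of the part-i) functional, which is the same inequality). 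Your spectral-diagonalization sketch and the midpoint splitting used to restore the exponential in (6.3) are correct minor variants of what the paper does (the paper instead carries the exponential factor inside its estimate (6.10)), and the precise prefactor $2$ in (6.2) is immaterial for the subsequent use of the lemma.
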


\begin{proof}
The proof consists of two parts. At first, we prove i) and then ii).\newline
\textbf{i)} Multiplying (6.1)$_{1}$ with $\widehat{\varphi }$ and (6.1)$_{2}$
with $\widehat{\sigma }$, we obtain \ 
\begin{align*}
&\frac{1}{2}\frac{d}{dt}\left( \left\Vert \widehat{\varphi }\left( t\right)
\right\Vert _{L^{2}\left( \Omega \right) }^{2}+\left\Vert \widehat{\sigma }%
\left( t\right) \right\Vert _{L^{2}\left( \Omega \right) }^{2}\right)
+\left\Vert \widehat{\varphi }\left( t\right) \right\Vert _{H^{2}\left(
\Omega \right) }^{2}+\chi _{\sigma }\left\Vert \widehat{\sigma }\left(
t\right) \right\Vert _{H^{1}\left( \Omega \right) }^{2}\\
&-2\chi _{\varphi }\int\limits_{\Omega }A^{1/2}\widehat{\sigma }\left(
t,x\right) A^{1/2}\widehat{\varphi }\left( t,x\right) dx+R_{1}\left\Vert 
\widehat{\varphi }\right\Vert _{H^{1}\left( \Omega \right) }^{2}=0. 
\tag{6.4}
\end{align*}
Since $R_{1}>\frac{2\chi _{\varphi }^{2}}{\chi _{\sigma }}$ and%
\begin{equation*}
\left\vert 2\chi _{\varphi }\int\limits_{\Omega }A^{1/2}\widehat{\sigma }%
\left( t,x\right) A^{1/2}\widehat{\varphi }\left( t,x\right) dx\right\vert
\leq \frac{\chi _{\sigma }}{2}\left\Vert \widehat{\sigma }\left( t\right)
\right\Vert _{H^{1}\left( \Omega \right) }^{2}+\frac{2\chi _{\varphi }^{2}}{%
\chi _{\sigma }}\left\Vert \widehat{\varphi }\right\Vert _{H^{1}\left(
\Omega \right) }^{2},
\end{equation*}%
from (6.4) it follows that%
\begin{equation}
\frac{1}{2}\frac{d}{dt}\left( \left\Vert \widehat{\varphi }\left( t\right)
\right\Vert _{L^{2}\left( \Omega \right) }^{2}+\left\Vert \widehat{\sigma }%
\left( t\right) \right\Vert _{L^{2}\left( \Omega \right) }^{2}\right)
+\left\Vert \widehat{\varphi }\left( t\right) \right\Vert _{H^{2}\left(
\Omega \right) }^{2}+\frac{\chi _{\sigma }}{2}\left\Vert \widehat{\sigma }%
\left( t\right) \right\Vert _{H^{1}\left( \Omega \right) }^{2}\leq 0. 
\tag{6.5}
\end{equation}%
Now, multiplying (6.1)$_{1}$ with $A\widehat{\varphi }$, we have%
\begin{equation*}
\frac{1}{2}\frac{d}{dt}\left( \left\Vert \widehat{\varphi }\left( t\right)
\right\Vert _{H^{1}\left( \Omega \right) }^{2}\right) +\left\Vert \widehat{%
\varphi }\left( t\right) \right\Vert _{H^{3}\left( \Omega \right)
}^{2}+R_{1}\left\Vert \widehat{\varphi }\right\Vert _{H^{2}\left( \Omega
\right) }^{2}=\chi _{\varphi }\int\limits_{\Omega }A^{1/2}\widehat{\sigma }%
\left( t,x\right) A^{3/2}\widehat{\varphi }\left( t,x\right) dx.
\end{equation*}%
Applying Young inequality in the right hand side of the las estimate, we
infer%
\begin{equation}
\frac{1}{2}\frac{d}{dt}\left( \left\Vert \widehat{\varphi }\left( t\right)
\right\Vert _{H^{1}\left( \Omega \right) }^{2}\right) +c_{1}\left\Vert 
\widehat{\varphi }\left( t\right) \right\Vert _{H^{3}\left( \Omega \right)
}^{2}\leq c_{2}\left\Vert \widehat{\sigma }\left( t\right) \right\Vert
_{H^{1}\left( \Omega \right) }^{2}.  \tag{6.6}
\end{equation}%
Then, after multiplying (6.5) with a big enough constant and adding the
obtained inequality to (6.6), we obtain%
\begin{equation*}
\frac{d}{dt}\left( \Theta \left( t\right) \right) +c_{3}\left( \left\Vert 
\widehat{\varphi }\left( t\right) \right\Vert _{H^{3}\left( \Omega \right)
}^{2}+\left\Vert \widehat{\sigma }\left( t\right) \right\Vert _{H^{1}\left(
\Omega \right) }^{2}\right) \leq 0,
\end{equation*}%
where $\Theta \left( t\right) =\left\Vert \widehat{\varphi }\left( t\right)
\right\Vert _{H^{1}\left( \Omega \right) }^{2}+\left\Vert \widehat{\varphi }%
\left( t\right) \right\Vert _{L^{2}\left( \Omega \right) }^{2}+\left\Vert 
\widehat{\sigma }\left( t\right) \right\Vert _{L^{2}\left( \Omega \right)
}^{2}$. Since $\left\Vert \widehat{\varphi }\left( t\right) \right\Vert
_{H^{3}\left( \Omega \right) }^{2}+\left\Vert \widehat{\sigma }\left(
t\right) \right\Vert _{H^{1}\left( \Omega \right) }^{2}\geq c\Theta \left(
t\right) $, from the last inequality, we get%
\begin{equation*}
\frac{d}{dt}\left( \Theta \left( t\right) \right) +c_{4}\Theta \left(
t\right) \leq 0,
\end{equation*}%
which yields%
\begin{equation}
\Theta \left( t\right) \leq e^{-c_{4}\left( t-\tau \right) }\Theta \left(
\tau \right) .  \tag{6.7}
\end{equation}%
Therefore, recalling the definition of $\Theta \left( t\right) $, from (6.7)
we deduce%
\begin{equation*}
\left\Vert \widehat{\varphi }\left( t\right) \right\Vert _{H^{1}\left(
\Omega \right) }^{2}+\left\Vert \widehat{\sigma }\left( t\right) \right\Vert
_{L^{2}\left( \Omega \right) }^{2}\leq 2e^{-c_{4}\left( t-\tau \right)
}\left( \left\Vert \widehat{\varphi }\left( \tau \right) \right\Vert
_{H^{1}\left( \Omega \right) }^{2}+\left\Vert \widehat{\sigma }\left( \tau
\right) \right\Vert _{L^{2}\left( \Omega \right) }^{2}\right) 
\end{equation*}%
which proves the desired estimate (6.2).\newline
\textbf{ii)} Now, we will prove (6.3). Firstly, testing (6.1)$_{1}$ with $%
A^{-1}\varphi $ and (6.1)$_{2}$ with $A^{-1}\sigma $, we obtain%
\begin{align*}
&\frac{1}{2}\frac{d}{dt}\left( \left\Vert A^{-1/2}\widehat{\varphi }\left(
t\right) \right\Vert _{L^{2}\left( \Omega \right) }^{2}+\left\Vert A^{-1/2}%
\widehat{\sigma }\left( t\right) \right\Vert _{L^{2}\left( \Omega \right)
}^{2}\right) +\left\Vert \widehat{\varphi }\left( t\right) \right\Vert
_{H^{1}\left( \Omega \right) }^{2}+\chi _{\sigma }\left\Vert \widehat{\sigma 
}\left( t\right) \right\Vert _{L^{2}\left( \Omega \right) }^{2}\\
&-2\chi _{\varphi }\int\limits_{\Omega }\widehat{\sigma }\left( t,x\right) 
\widehat{\varphi }\left( t,x\right) dx+R_{1}\left\Vert \widehat{\varphi }%
\right\Vert _{L^{2}\left( \Omega \right) }^{2}=0.  \tag{6.8}
\end{align*}
Then, by using the similar estimates obtained in the first step, (6.8)
yields that%
\begin{equation}
\frac{1}{2}\frac{d}{dt}\left( \left\Vert \widehat{\varphi }\left( t\right)
\right\Vert _{\overset{\ast }{H^{1}}\left( \Omega \right) }^{2}+\left\Vert 
\widehat{\sigma }\left( t\right) \right\Vert _{\overset{\ast }{H^{1}}\left(
\Omega \right) }^{2}\right) +\left\Vert \widehat{\varphi }\left( t\right)
\right\Vert _{H^{1}\left( \Omega \right) }^{2}+\frac{\chi _{\sigma }}{2}%
\left\Vert \widehat{\sigma }\left( t\right) \right\Vert _{L^{2}\left( \Omega
\right) }^{2}\leq 0.  \tag{6.9}
\end{equation}%
Since $\left\Vert \widehat{\varphi }\left( t\right) \right\Vert
_{H^{1}\left( \Omega \right) }^{2}+\left\Vert \widehat{\sigma }\left(
t\right) \right\Vert _{L^{2}\left( \Omega \right) }^{2}\geq c\left(
\left\Vert \widehat{\varphi }\left( t\right) \right\Vert _{\overset{\ast }{%
H^{1}}\left( \Omega \right) }^{2}+\left\Vert \widehat{\sigma }\left(
t\right) \right\Vert _{\overset{\ast }{H^{1}}\left( \Omega \right)
}^{2}\right) $ for some $c>0$, from the last inequality there holds%
\begin{align*}
&\frac{d}{dt}\left( \left\Vert \widehat{\varphi }\left( t\right) \right\Vert
_{\overset{\ast }{H^{1}}\left( \Omega \right) }^{2}+\left\Vert \widehat{%
\sigma }\left( t\right) \right\Vert _{\overset{\ast }{H^{1}}\left( \Omega
\right) }^{2}\right) +c_{5}\left( \left\Vert \widehat{\varphi }\left(
t\right) \right\Vert _{\overset{\ast }{H^{1}}\left( \Omega \right)
}^{2}+\left\Vert \widehat{\sigma }\left( t\right) \right\Vert _{\overset{%
\ast }{H^{1}}\left( \Omega \right) }^{2}\right)\\
&+c_{6}\left( \left\Vert \widehat{\varphi }\left( t\right) \right\Vert
_{H^{1}\left( \Omega \right) }^{2}+\frac{\chi _{\sigma }}{2}\left\Vert 
\widehat{\sigma }\left( t\right) \right\Vert _{L^{2}\left( \Omega \right)
}^{2}\right) \leq 0. 
\end{align*}
From the last estimate, we deduce%
\begin{align*}
&\left\Vert \widehat{\varphi }\left( t\right) \right\Vert _{\overset{\ast }{%
H^{1}}\left( \Omega \right) }^{2}+\left\Vert \widehat{\sigma }\left(
t\right) \right\Vert _{\overset{\ast }{H^{1}}\left( \Omega \right)
}^{2}+\int\limits_{\tau }^{t}\left( \left\Vert \widehat{\varphi }\left(
t\right) \right\Vert _{H^{1}\left( \Omega \right) }^{2}+\frac{\chi _{\sigma }%
}{2}\left\Vert \widehat{\sigma }\left( t\right) \right\Vert _{L^{2}\left(
\Omega \right) }^{2}\right)ds\\
&\leq c_{7}e^{-c_{5}\left( t-\tau \right) }\left( \left\Vert \widehat{\varphi 
}\left( \tau \right) \right\Vert _{\overset{\ast }{H^{1}}\left( \Omega
	\right) }^{2}+\left\Vert \widehat{\sigma }\left( \tau \right) \right\Vert _{%
	\overset{\ast }{H^{1}}\left( \Omega \right) }^{2}\right),
\end{align*}%
and in particular%
\begin{equation}
\int\limits_{\tau }^{t}\left( \left\Vert \widehat{\varphi }\left( t\right)
\right\Vert _{H^{1}\left( \Omega \right) }^{2}+\left\Vert \widehat{\sigma }%
\left( t\right) \right\Vert _{L^{2}\left( \Omega \right) }^{2}\right) ds\leq
c_{7}e^{-c_{5}\left( t-\tau \right) }\left( \left\Vert \widehat{\varphi }%
\left( \tau \right) \right\Vert _{\overset{\ast }{H^{1}}\left( \Omega
\right) }^{2}+\left\Vert \widehat{\sigma }\left( \tau \right) \right\Vert _{%
\overset{\ast }{H^{1}}\left( \Omega \right) }^{2}\right) .  \tag{6.10}
\end{equation}%
Next, testing (6.1)$_{1}$ with $\left( t-\tau \right) A^{-1}\widehat{\varphi 
}_{t}$ and (6.1)$_{2}$ with $\left( t-\tau \right) A^{-1}\widehat{\sigma }%
_{t}$, we have%
\begin{equation}
\frac{d}{dt}\left( \left( t-\tau \right) \Phi \left( t\right) \right)
+\left( t-\tau \right) \left\Vert A^{-1/2}\widehat{\varphi }_{t}\left(
t\right) \right\Vert _{L^{2}\left( \Omega \right) }^{2}+\left( t-\tau
\right) \left\Vert A^{-1/2}\widehat{\sigma }_{t}\left( t\right) \right\Vert
_{L^{2}\left( \Omega \right) }^{2}=\Phi \left( t\right)   \tag{6.11}
\end{equation}%
where $\Phi \left( t\right) =\frac{1}{2}\left\Vert \widehat{\varphi }\left(
t\right) \right\Vert _{H^{1}\left( \Omega \right) }^{2}+\frac{\chi _{\sigma }%
}{2}\left\Vert \widehat{\sigma }\left( t\right) \right\Vert _{L^{2}\left(
\Omega \right) }^{2}+\frac{R_{1}}{2}\left\Vert \widehat{\varphi }\left(
t\right) \right\Vert _{L^{2}\left( \Omega \right) }^{2}-\chi _{\varphi
}\int\limits_{\Omega }\widehat{\varphi }\left( t\right) \widehat{\sigma }%
\left( t\right) $. Hence, integrating (6.11) from $\tau $ to $t$, we obtain%
\begin{equation}
\left( t-\tau \right) \Phi \left( t\right) \leq \int\limits_{\tau }^{t}\Phi
\left( s\right) ds.  \tag{6.12}
\end{equation}%
Thanks to $R_{1}>\frac{2\chi _{\varphi }^{2}}{\chi _{\sigma }}$, it can be
readily obtained that%
\begin{equation}
M_{1}\left( \left\Vert \widehat{\varphi }\left( t\right) \right\Vert
_{H^{1}\left( \Omega \right) }^{2}+\left\Vert \widehat{\sigma }\left(
t\right) \right\Vert _{L^{2}\left( \Omega \right) }^{2}\right) \leq \Phi
\left( t\right) \leq M_{2}\left( \left\Vert \widehat{\varphi }\left(
t\right) \right\Vert _{H^{1}\left( \Omega \right) }^{2}+\left\Vert \widehat{%
\sigma }\left( t\right) \right\Vert _{L^{2}\left( \Omega \right)
}^{2}\right)   \tag{6.13}
\end{equation}%
for some constants $M_{1}$, $M_{2}>0$. Thus, considering (6.13) in (6.12),
we have%
\begin{equation*}
\left( t-\tau \right) \left( \left\Vert \widehat{\varphi }\left( t\right)
\right\Vert _{H^{1}\left( \Omega \right) }^{2}+\left\Vert \widehat{\sigma }%
\left( t\right) \right\Vert _{L^{2}\left( \Omega \right) }^{2}\right) \leq
c_{8}\int\limits_{\tau }^{t}\left( \left\Vert \widehat{\varphi }\left(
t\right) \right\Vert _{H^{1}\left( \Omega \right) }^{2}+\left\Vert \widehat{%
\sigma }\left( t\right) \right\Vert _{L^{2}\left( \Omega \right)
}^{2}\right) ds.
\end{equation*}%
Consequently, the last estimate and (6.10) proves (6.3).
\end{proof}

\begin{lemma}
The fractal dimension of $\mathcal{A}_{M}$ is finite.
\end{lemma}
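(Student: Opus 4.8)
The plan is to derive a Ladyzhenskaya-type squeezing (stabilizability) estimate for $\{S_M(t)\}_{t\ge0}$ on $\mathcal A_M$ and then invoke the abstract criterion for finite fractal dimension used in \cite{gal}, \cite{khan1} and \cite{MZ}. Fix $u_1,u_2\in\mathcal A_M$ and let $(\varphi_i,\sigma_i)=S_M(\cdot)u_i$ be the corresponding trajectories. By Theorem 5.10 these stay in a bounded subset of $H^{3}(\Omega)\times H^{1}(\Omega)$; in particular $\varphi_i$ and $\nabla\varphi_i$ are uniformly bounded in $L^{\infty}(\Omega)$, and $\mu_i$, $N_{\sigma_i}$ are uniformly bounded in $H^{1}(\Omega)$ along these trajectories, so in particular the solutions are strong. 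Put $w=(\varphi,\sigma):=(\varphi_1-\varphi_2,\sigma_1-\sigma_2)$ and $\mu:=\mu_1-\mu_2$. Subtracting the equations (1.1)--(1.3) (with $m=n=1$) and adding and subtracting the linear operator
\[
\mathcal L(\varphi,\sigma):=\bigl(A^{2}\varphi-\chi_\varphi A\sigma+R_1A\varphi,\ \chi_\sigma A\sigma-\chi_\varphi A\varphi\bigr)
\]
appearing in (6.1), one obtains $w_t+\mathcal Lw=H(t)$, hence, by Duhamel's formula and the definition of the process $\{T(t,\tau)\}$ in Lemma 6.2,
\[
w(t)=T(t,\tau)\,w(\tau)+\int_\tau^t T(t,s)\,H(s)\,ds\qquad(t\ge\tau),
\]
where $H(t)$ gathers the genuinely nonlinear contributions: $\Delta\bigl(\Psi'(\varphi_1)-\Psi'(\varphi_2)\bigr)$, lower-order multiples of $\varphi$ and $\sigma$, and the reaction terms $p(\varphi_1)(N_{\sigma_1}-\mu_1)-p(\varphi_2)(N_{\sigma_2}-\mu_2)$.

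The core estimate to be proved is
\[
\|H(t)\|_{\overset{\ast}{H^1}(\Omega)\times\overset{\ast}{H^1}(\Omega)}\le C\,\|w(t)\|_{H^{1}(\Omega)\times L^{2}(\Omega)}
\]
with $C=C(M)$: using $\|\Delta g\|_{\overset{\ast}{H^1}}\le\|\nabla g\|_{L^{2}}$ with $g=\Psi'(\varphi_1)-\Psi'(\varphi_2)$ and the uniform $L^{\infty}$-bounds on $\varphi_i$, $\nabla\varphi_i$ from Theorem 5.10 one gets $\|\Delta g\|_{\overset{\ast}{H^1}}\le C\|\varphi\|_{H^1}$; the reaction terms are handled by testing against $H^{1}(\Omega)$-functions exactly as in the proof of Lemma 5.1 (compare (5.21)), once more using the $H^1$-bounds of $\mu_i$, $N_{\sigma_i}$ and the $L^\infty$-bounds of $\varphi_i$, $\nabla\varphi_i$; the remaining terms are lower order. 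Inserting this into Duhamel's formula, estimating $T(t,0)w(0)$ by $\Lambda s^{-1/2}\|w(0)\|_{\overset{\ast}{H^1}\times\overset{\ast}{H^1}}$ via (6.3) and $\int_0^s T(s,r)H(r)\,dr$ by $C\int_0^s(s-r)^{-1/2}\|w(r)\|_{H^{1}\times L^{2}}\,dr$ via (6.3) and the core estimate, and closing the loop with a generalized singular-kernel Gronwall inequality plus a short-time bootstrap near $s=0$ (together with (6.2) on $[s_0,T^\ast]$), one arrives, for trajectories on $\mathcal A_M$ and any fixed $T^\ast>0$, at
\[
\|w(s)\|_{H^{1}\times L^{2}}\le\frac{C_{T^\ast}}{\sqrt s}\,\|w(0)\|_{\overset{\ast}{H^1}\times\overset{\ast}{H^1}}\qquad(0<s\le T^\ast).
\]

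Evaluating this at $s=T^\ast$ gives the stabilizability estimate
\[
\bigl\|S_M(T^\ast)u_1-S_M(T^\ast)u_2\bigr\|_{H^{1}\times L^{2}}\le K\,\|u_1-u_2\|_{\overset{\ast}{H^1}\times\overset{\ast}{H^1}}\qquad(u_1,u_2\in\mathcal A_M),
\]
with $K=C_{T^\ast}/\sqrt{T^\ast}$. Since $H^{1}(\Omega)\times L^{2}(\Omega)\hookrightarrow\hookrightarrow\overset{\ast}{H^1}(\Omega)\times\overset{\ast}{H^1}(\Omega)$ compactly and $\mathcal A_M=S_M(T^\ast)\mathcal A_M$, the classical Ladyzhenskaya covering argument applies: cover $\mathcal A_M$ by balls of radius $r$ in $H^{1}\times L^{2}$, refine each by a fixed finite number $N$ of balls of radius $\rho r$ in the weaker norm (possible because bounded sets of $H^{1}\times L^{2}$ are precompact there), and note that the above estimate makes the $S_M(T^\ast)$-image of each piece of $(H^{1}\times L^{2})$-diameter $\le\theta r$ with $\theta<1$ once $\rho$ is chosen small; hence the covering numbers satisfy $N(\theta r)\le N\cdot N(r)$ and $\dim_f\mathcal A_M\le\ln N/\ln(1/\theta)<\infty$, which is exactly the abstract conclusion quoted from \cite{gal}, \cite{khan1} and \cite{MZ}.

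The main obstacle is the core estimate on $H(t)$, and in particular keeping the term $\Delta\bigl(\Psi'(\varphi_1)-\Psi'(\varphi_2)\bigr)$ of order $\|w\|_{H^{1}\times L^{2}}$ in the dual norm: this relies essentially on the $H^{3}$-regularity of $\mathcal A_M$ from Theorem 5.10 (to bound $\nabla\varphi_i$ in $L^{\infty}$ after integrating by parts, both here and inside the reaction terms) and on the regularity of $\Psi$ and $p$ in $(\Psi)$ and $(P_2)$; for the smaller values of $\rho$ the difference $\Psi''(\varphi_1)-\Psi''(\varphi_2)$ has to be controlled through the interpolation and Sobolev embeddings already exploited in Sections 4 and 5, as in \cite{frigieri}. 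The second delicate point is the singular-kernel Gronwall/bootstrap step that upgrades the pointwise bounds (6.2)--(6.3) on the linear process into the $s^{-1/2}$ smoothing estimate for the full nonlinear difference.
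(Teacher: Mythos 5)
Your proposal is correct in substance and, in its technical core, coincides with the paper's argument: both take two trajectories on $\mathcal{A}_{M}$, write the equation for the difference with the linear operator of (6.1) on the left, represent the difference by Duhamel's formula through the process $T(t,\tau)$ of Lemma 6.1, and estimate the nonlinear remainder in $\overset{\ast }{H^{1}}\left( \Omega \right) \times \overset{\ast }{H^{1}}\left( \Omega \right)$ by $C\left\Vert (\varphi,\sigma)\right\Vert _{H^{1}\left( \Omega \right) \times L^{2}\left( \Omega \right) }$ using the $H^{3}\times H^{1}$ boundedness of the attractor (Theorem 5.10), exactly as the paper does for its term $F$. Where you diverge is the concluding abstract step. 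The paper keeps two inequalities: a strong-norm Lipschitz bound (6.18) obtained from (6.2)--(6.3) by Gronwall, and the quasi-stability inequality (6.19), in which the integral term is converted (via interpolation and the $H^{3}\times H^{1}$ bound) into $\sup_{0\leq s\leq t}\left\Vert S(s)\theta_{2}-S(s)\theta_{1}\right\Vert _{\overset{\ast }{H^{1}}\times \overset{\ast }{H^{1}}}$, and then it invokes the abstract quasi-stability theorem \cite[Theorem 7.9.6]{chueshov}. You instead close a singular-kernel Gronwall loop to obtain the weak-to-strong smoothing estimate $\left\Vert S_{M}(T^{\ast })u_{1}-S_{M}(T^{\ast })u_{2}\right\Vert _{H^{1}\times L^{2}}\leq K\left\Vert u_{1}-u_{2}\right\Vert _{\overset{\ast }{H^{1}}\times \overset{\ast }{H^{1}}}$ on the invariant set $\mathcal{A}_{M}$, and then run the classical Ladyzhenskaya covering argument based on the compact embedding $H^{1}\times L^{2}\hookrightarrow\hookrightarrow \overset{\ast }{H^{1}}\times \overset{\ast }{H^{1}}$. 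Both routes are legitimate: yours is more self-contained (no appeal to the quasi-stability machinery), at the price of the Henry-type singular Gronwall lemma and of checking that the a priori boundedness on the attractor justifies its application; the paper's route avoids absorbing the $(t-s)^{-1/2}$ kernel into the same norm and delegates the covering argument to a quoted theorem. Note also that the delicate point you flag (controlling $\Psi''(\varphi_{1})-\Psi''(\varphi_{2})$ when bounding $A\left( \Psi'(\varphi_{1})-\Psi'(\varphi_{2})\right)$ in the dual norm, given only $\Psi\in C^{2}$) is present in the paper's own estimate of $F$ and is treated there with the same brevity, so it is not a gap specific to your argument.
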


\begin{proof}
Let $\theta _{1},$ $\theta _{2}\in \mathcal{A}_{M}.$ Then
there exist full trajectories such that $\left( \varphi _{1}\left( t\right)
,\sigma _{1}\left( t\right) \right) =S_{M}\left( t\right) \theta _{1},$ $\left(
\varphi _{2}\left( t\right) ,\sigma _{2}\left( t\right) \right) =S_{M}\left(
t\right) \theta _{2}$. Define 
\begin{equation*}
\varphi  =\varphi _{1} -\varphi _{2} \text{, \ }\sigma =\sigma _{1}
-\sigma _{2} \text{, \ }\mu  =\mu _{1} -\mu _{2} \text{, \ \ }N_{\sigma }
=N_{\sigma _{1}} -N_{\sigma _{2}}
\end{equation*}%
where $N_{\sigma _{1}}=\chi _{\sigma }\sigma _{1}+\chi _{\varphi }\left(
1-\varphi _{1}\right) $ and $N_{\sigma _{2}}=\chi _{\sigma }\sigma _{2}+\chi
_{\varphi }\left( 1-\varphi _{2}\right) $. Then $\left( \varphi \left(
t\right) ,\sigma \left( t\right) \right) $ is the solution of the following
problem.%
\begin{equation*}
\left\{ 
\begin{array}{r}
\varphi _{t}=\Delta \mu +\left( p\left( \varphi _{1}\right) -p\left( \varphi
_{2}\right) \right) \left( N_{\sigma _{1}}-\mu _{1}\right) +p\left( \varphi
_{2}\right) \left( N_{\sigma }-\mu \right) \ \ \text{\ in }\left( 0,T\right) \times \Omega 
, \\ 
\mu =-\Delta \varphi +\Psi ^{\prime }\left( \varphi _{1}\right) -\Psi
^{\prime }\left( \varphi _{2}\right) -\chi _{\varphi }\sigma \ \ \text{\ in }%
\left( 0,T\right)\times\Omega   , \\ 
\sigma _{t}=\chi _{\sigma }\Delta \sigma -\chi _{\varphi }\Delta \varphi
-\left( p\left( \varphi _{1}\right) -p\left( \varphi _{2}\right) \right)
\left( N_{\sigma _{1}}-\mu _{1}\right) -p\left( \varphi _{2}\right) \left(
N_{\sigma }-\mu \right) \ \ \text{\ in }\left( 0,T\right) \times \Omega  , \\ 
\partial _{\nu }\mu =\partial _{\nu }\varphi =\partial _{\nu }\sigma =0\text{\ \
on }\left( 0,T\right)\times\Gamma.%
\end{array}%
\right. 
\end{equation*}%
Then, let us rewrite the above problem as follows.%
\begin{equation}
\left\{ 
\begin{array}{r}
\varphi _{t}+A\mu =\mu +B\ \ \text{\ in }\left( 0,T\right) \times \Omega  ,
\\ 
\mu =A\varphi +\Psi ^{\prime }\left( \varphi _{1}\right) -\Psi ^{\prime
}\left( \varphi _{2}\right) -\chi _{\varphi }\sigma -\varphi \ \ \text{\ in }%
\left( 0,T\right) \times \Omega , \\ 
\sigma _{t}+\chi _{\sigma }A\sigma -\chi _{\varphi }A\varphi =\chi _{\sigma
}\sigma -\chi _{\varphi }\varphi -B\ \ \text{\ in }\left( 0,T\right) \times \Omega ,%
\end{array}%
\right.   \tag{6.14}
\end{equation}%
where $B=\left( p\left( \varphi _{1}\right) -p\left( \varphi _{2}\right)
\right) \left( N_{\sigma _{1}}-\mu _{1}\right) +p\left( \varphi _{2}\right)
\left( N_{\sigma }-\mu \right) .$ Considering (6.14)$_{2}$ in (6.14)$_{1}$,
we get the following problem. 
\begin{equation*}
\left\{ 
\begin{array}{l}
\varphi _{t}+A^{2}\varphi -\chi _{\varphi }A\sigma +R_{1}A\varphi =\left(
R_{1}-1\right) A\varphi  \\
-A\left( \Psi ^{\prime }\left( \varphi _{1}\right) -\Psi ^{\prime }\left(
\varphi _{2}\right) \right) -\Delta \varphi +\Psi ^{\prime }\left( \varphi
_{1}\right) -\Psi ^{\prime }\left( \varphi _{2}\right) -\chi _{\varphi
}\sigma +B, \\ 
\sigma _{t}+\chi _{\sigma }A\sigma -\chi _{\varphi }A\varphi =\chi _{\sigma
}\sigma -\chi _{\varphi }\varphi -B.%
\end{array}%
\right. 
\end{equation*}%
Then, by the variation of the parameters formula, we find%
\begin{equation}
\left( \varphi \left( t\right) ,\sigma \left( t\right) \right) =T\left(
t,0\right) \left( \varphi \left( 0\right) ,\sigma \left( 0\right) \right)
+\int\limits_{0}^{t}T\left( t,s\right) F\left( s\right) ds  \tag{6.15}
\end{equation}%
where
\begin{align*}
F =\left( \left( R_{1}-1\right) A\varphi -A\left( \Psi
^{\prime }\left( \varphi _{1}\right) -\Psi ^{\prime }\left( \varphi
_{2}\right) \right) -\Delta \varphi +\Psi ^{\prime }\left( \varphi
_{1}\right) -\Psi ^{\prime }\left( \varphi _{2}\right) -\chi _{\varphi
}\sigma +B,\chi _{\sigma }\sigma -\chi _{\varphi }\varphi -B\right) .
\end{align*}
Hence, using previous lemma we obtain
\begin{align*}
&\left\Vert \left( \varphi \left( t\right) ,\sigma \left( t\right) \right)
\right\Vert _{H^{1}\left( \Omega \right) \times L^{2}\left( \Omega \right) }\\
&\leq \left\Vert T\left( t,0\right) \left( \varphi \left( 0\right) ,\sigma
\left( 0\right) \right) \right\Vert _{H^{1}\left( \Omega \right) \times
	L^{2}\left( \Omega \right) }+\int\limits_{0}^{t}\left\Vert T\left(
t,s\right) F\left( s\right) \right\Vert _{H^{1}\left( \Omega \right) \times
	L^{2}\left( \Omega \right) }ds\text{ }\\
&\leq 2e^{-\omega _{1}t}\left\Vert \theta _{2}-\theta _{1}\right\Vert
_{H^{1}\left( \Omega \right) \times L^{2}\left( \Omega \right)
}+c_{1}\int\limits_{0}^{t}\frac{e^{-\omega _{2}\left( t-s\right) }}{\sqrt{%
		t-s}}\left\Vert F\left( s\right) \right\Vert _{\overset{\ast }{H^{1}}\left(
	\Omega \right) \times \overset{\ast }{H^{1}}\left( \Omega \right) }ds. 
\tag{6.16}
\end{align*}%
On the other hand, since the attractor $A_{m}$ is bounded in $H^{3}\left(
\Omega \right) \times H^{1}\left( \Omega \right) $, we have $\varphi \in
H^{3}\left( \Omega \right) \hookrightarrow L^{\infty }\left( \Omega \right) $
which yields%
\begin{equation*}
\left\Vert F\left( s\right) \right\Vert _{\overset{\ast }{H^{1}}\left(
\Omega \right) \times \overset{\ast }{H^{1}}\left( \Omega \right) }\leq
c_{2}\left\Vert \left( \varphi \left( t\right) ,\sigma \left( t\right)
\right) \right\Vert _{H^{1}\left( \Omega \right) \times L^{2}\left( \Omega
\right) }.
\end{equation*}%
Taking into account the last estimate in (6.16), we get%
\begin{align*}
&\left\Vert \left( \varphi \left( t\right) ,\sigma \left( t\right) \right)
\right\Vert _{H^{1}\left( \Omega \right) \times L^{2}\left( \Omega \right)
}\leq 2e^{-\omega _{1}t}\left\Vert \theta _{2}-\theta _{1}\right\Vert
_{H^{1}\left( \Omega \right) \times L^{2}\left( \Omega \right) }\\
&+c_{1}\int\limits_{0}^{t}\frac{1}{\sqrt{t-s}}\left\Vert \left( \varphi
\left( t\right) ,\sigma \left( t\right) \right) \right\Vert _{H^{1}\left(
	\Omega \right) \times L^{2}\left( \Omega \right) }ds. \tag{6.17}
\end{align*}%
Then, by the Gronwall inequality, we deduce%
\begin{equation}
\left\Vert S\left( t\right) \theta _{2}-S\left( t\right) \theta
_{1}\right\Vert _{H^{1}\left( \Omega \right) \times L^{2}\left( \Omega
\right) }\leq 2e^{-\omega _{1}t+c_{1}2\sqrt{t}}\left\Vert \theta _{2}-\theta
_{1}\right\Vert _{H^{1}\left( \Omega \right) \times L^{2}\left( \Omega
\right) }\text{.}  \tag{6.18}
\end{equation}%
Moreover, recalling boundedness of the $A_{m}$ in $H^{3}\left( \Omega
\right) \times H^{1}\left( \Omega \right) $ once more and using
interpolation in the last term of (6.17), we obtain%
\begin{align*}
&\left\Vert \left( \varphi \left( t\right) ,\sigma \left( t\right) \right)
\right\Vert _{H^{1}\left( \Omega \right) \times L^{2}\left( \Omega \right)
}\leq 2e^{-\omega _{1}t}\left\Vert \theta _{2}-\theta _{1}\right\Vert
_{H^{1}\left( \Omega \right) \times L^{2}\left( \Omega \right) }\\
&+c_{2}\sqrt{t}\int\limits_{0}^{t}\frac{1}{\sqrt{t-s}}\left\Vert \left(
\varphi \left( s\right) ,\sigma \left( s\right) \right) \right\Vert _{%
	\overset{\ast }{H^{1}}\left( \Omega \right) \times \overset{\ast }{H^{1}}%
	\left( \Omega \right) }ds,
\end{align*}
which yields 
\begin{align*}
&\left\Vert S\left( t\right) \theta _{2}-S\left( t\right) \theta
_{1}\right\Vert _{H^{1}\left( \Omega \right) \times L^{2}\left( \Omega
\right) }\\
&\leq 2e^{-\omega _{1}t}\left\Vert \theta _{2}-\theta _{1}\right\Vert
_{H^{1}\left( \Omega \right) \times L^{2}\left( \Omega \right)
}+c_{3}t\sup_{0\leq s\leq t}\left\Vert S\left( s\right) \theta _{2}-S\left(
s\right) \theta _{1}\right\Vert _{\overset{\ast }{H^{1}}\left( \Omega
	\right) \times \overset{\ast }{H^{1}}\left( \Omega \right) } . \tag{6.19}
\end{align*}%
Consequently, exploiting \cite[Theorem 7.9.6]{chueshov}, (6.18) and (6.19) yield the
finite dimensionality of the attractor.
\end{proof}
\section*{Acknowledgements}

This work was supported by the Scientific and Technological Research Council of Turkey (TUBITAK) and by the RTG 2339 “Interfaces, Complex Structures, and Singular Limits” of the
German Science Foundation (DFG). The support is
gratefully acknowledged.

\end{document}